\theoremstyle{plain}
\newtheorem{theorem}{Theorem}
\newtheorem*{menger}{Menger's Theorem}
\newtheorem*{thmJohnuws}{Theorem \ref{Johnuws}}
\newtheorem*{thmcdim}{Theorem \ref{cdim}}
\newtheorem*{propuwsblowup}{Proposition \ref{uwsblowup}}
\newtheorem{lemma}[theorem]{Lemma}
\newtheorem{prop}[theorem]{Proposition}
\newtheorem{corollary}[theorem]{Corollary}
\theoremstyle{definition}
\newtheorem{definition}[theorem]{Definition}
\theoremstyle{remark}
\newtheorem{remark}[theorem]{Remark}
\newtheorem{question}{Question}
\newcommand{\lp}{\left(}
\newcommand{\rp}{\right)}
\newcommand{\e}{\epsilon}
\newcommand{\R}{\mathbb{R}}
\newcommand{\C}{\mathbb{C}}
\newcommand{\Sp}{\mathbb{S}}
\newcommand{\D}{\mathbb{D}}
\newcommand{\N}{\mathbb{N}}
\newcommand{\cl}{\overline}
\newcommand{\ti}{\textit}
\let\mod\relax
\DeclareMathOperator{\dist}{\textup{\text{dist}}}
\DeclareMathOperator{\diam}{\textup{\text{diam}}}
\DeclareMathOperator{\mod}{\textup{\text{mod}}}
\DeclareMathOperator{\SLE}{\textup{\text{SLE}}}
\DeclareMathOperator{\Cdim}{\textup{Cdim}}
\DeclareMathOperator{\AR}{\textup{AR}}
\numberwithin{equation}{section}
\numberwithin{theorem}{section}
\begin{document}

\title{Conformal Dimension and Boundaries of Planar Domains}
\author{Kyle Kinneberg}
\address{Department of Mathematics, Rice University, 6100 Main St., Houston TX 77005}
\email{kyle.kinneberg@rice.edu}

\subjclass[2010]{Primary: 30L10, 28A78; Secondary: 30C20}
\date{\today}
\keywords{Conformal dimension, John domains, H\"older domains}

\maketitle

\begin{abstract}
Building off of techniques that were recently developed by M. Carrasco, S. Keith, and B. Kleiner to study the conformal dimension of boundaries of hyperbolic groups, we prove that uniformly perfect boundaries of John domains in $\hat{\C}$ have conformal dimension equal to 0 or 1. Our proof uses a discretized version of Carrasco's ``uniformly well-spread cut point" condition, which we call the discrete UWS property, that is well-suited to deal with metric spaces that are not linearly connected. More specifically, we prove that boundaries of John domains have the discrete UWS property and that any compact, doubling, uniformly perfect metric space with the discrete UWS property has conformal dimension equal to 0 or 1. In addition, we establish other geometric properties of metric spaces with the discrete UWS property, including connectivity properties of their weak tangents.
\end{abstract}

\section{Introduction} \label{intro}

Let $\Omega \subset \hat{\C}$ be a domain (a connected, open set) in the Riemann sphere. We say that $\Omega$ is a quasi-disk if it is the image of the unit disk $\D$ under a quasiconformal homeomorphism of $\hat{\C}$. In this case, $\partial \Omega$ is a Jordan curve that is quasisymmetrically equivalent to the unit circle $\Sp^1$. From the viewpoint of quasiconformal geometry, and the viewpoint we adopt in this paper, quasi-disks and quasi-circles form extremely nice classes of metric objects.

Here we are interested in two other classes of domains in $\hat{\C}$: John domains and H\"older domains. Informally, a John domain is one in which every point can be joined to a common base-point by a ``twisted cone" with uniform vertex angle. Then, again informally, a H\"older domain is one in which the quasi-hyperbolic metric grows in roughly the same way that it does in a John domain. Every quasi-disk is a John domain, and every John domain is a H\"older domain, but these are both proper containments (even in the simply connected setting).

John domains and H\"older domains arise naturally in conformal dynamics, both from iteration of rational maps and from Kleinian groups. For example, if $p(z)$ is polynomial of degree at least $2$, then all of the Fatou components are John domains if and only if $p(z)$ is semi-hyperbolic \cite{CJY}. An analogous statement about John domains and Kleinian groups is proved in \cite{McM}. More broadly, if $q(z)$ is a rational map that satisfies the Collet--Eckmann expansion condition, then all Fatou components are H\"older domains \cite{GS} (and \cite{Prz} gives a partial converse to this). We note that the latter examples are, in some senses, typical: almost every external ray to the Mandelbrot set lands on a value $c$ for which $q(z) = z^2 +c$ satisfies the Collet--Eckmann condition \cite{Smi}.

H\"older domains also appear in some explicitly stochastic settings, particularly those connected to $\SLE$. For example, the unbounded complementary domain of an $\SLE_\kappa$ trace is, almost surely, a H\"older domain when $\kappa \neq 4$ \cite{RS}. In a similar way, the interior domains for the random Jordan curves constructed in \cite{AJKS} are, almost surely, H\"older domains. Here we should mention that the H\"older domains appearing in these contexts are, almost surely, not John domains. At the same time, due to similarities in the behavior of the quasi-hyperbolic metric, we think of John domains as deterministic ``toy examples" of the stochastic domains. 

In all three realms mentioned above (rational dynamics, Kleinian groups, and stochastic planar processes), there are interesting questions concerning quasiconformal equivalence. When are two Kleinian groups quasiconformally conjugate? When are two Julia sets quasisymmetrically equivalent? Are two independent samples of $\SLE_{\kappa}$ traces quasiconformally equivalent almost surely? There is a significant body of work related to the first two questions (classical results include \cite{Tuk} and \cite{MSS}; for further discussion see \cite[Section 6]{Kap} and \cite[Section 8.3]{MT10}), and the third question was recently asked by C. McMullen.

In light of these questions, we are motivated to study quasisymmetric invariants of the relevant objects. A prominent invariant coming from hyperbolic geometry is the conformal dimension. Originally introduced by Pansu \cite{Pan} to study visual boundaries of rank-one symmetric spaces, the conformal dimension can be defined for any metric space $(X,d)$ and is denoted by $\Cdim(X)$. More accurately, we will concentrate on a related invariant, the Ahlfors-regular conformal dimension, denoted $\Cdim_{\AR}(X)$. These have become important tools in the study of hyperbolic groups and, more generally, in the analysis of metric spaces \cite{MT10}.

Returning to the setting of planar domains, in this paper we focus primarily on questions related to the conformal dimension of $\partial \Omega$, where $\Omega$ is a John domain. Indeed, if $\Omega$ is a quasi-disk, it follows immediately from the definitions that the conformal dimension of $\partial \Omega$ is equal to 1. On the other hand, we suspect that the conformal dimension of boundaries of H\"older domains can take any value strictly between 1 and 2. Thus, John domains seem to be the appropriate class to investigate. We will prove the following theorem.

\begin{theorem} \label{John1}
If $\Omega \subset \hat{\C}$ is a John domain with $\partial \Omega$ uniformly perfect, then $\Cdim_{\AR}(\partial \Omega) \in \{0,1\}$. It is is equal to $0$ if and only if $\partial \Omega$ is uniformly disconnected.
\end{theorem}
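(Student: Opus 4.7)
The plan is to reduce Theorem~\ref{John1} to two separate results that are advertised in the abstract: a geometric fact about John domains, and a general quasisymmetric-dimension statement about metric spaces. Since $\partial \Omega \subset \hat{\C}$ is automatically compact and doubling, and is uniformly perfect by hypothesis, the only structural check needed on $\partial \Omega$ itself is the discrete UWS property.

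The first task is to prove that $\partial \Omega$ has the discrete UWS property (Proposition~\ref{uwsblowup}). The intuition is that in a John domain, every boundary ball $B(x,r) \cap \partial \Omega$ lies near a John curve emanating from the base-point, and such curves come arbitrarily close to $\partial \Omega$ while ``pinching off'' a definite portion of $B(x,r)$. Translated to a discretized setting—graph approximations at each dyadic scale—this pinching becomes a cut-vertex condition. I would cover $\partial \Omega$ at scale $r$ by balls of uniformly bounded multiplicity, form a graph by joining overlapping balls, and use the John arcs together with standard planarity and Koebe-type estimates to identify cut vertices that are both numerous and uniformly distributed throughout the graph. The John constant feeds directly into the quantitative ``well-spread'' bound.

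The second task is to invoke Theorem~\ref{cdim}: any compact, doubling, uniformly perfect metric space $X$ with the discrete UWS property has $\Cdim_{\AR}(X) \in \{0,1\}$. Its proof adapts the Carrasco--Keith--Kleiner scheme of constructing weighted subdivision rules that bring the Ahlfors-regular dimension arbitrarily close to $1$; the novelty is that well-spread cut \emph{points}, rather than genuine curves, suffice, which is what allows the argument to survive the possible non-linear-connectedness of $X$. For the dichotomy between $0$ and $1$, one uses that an Ahlfors-regular uniformly perfect space of dimension strictly less than $1$ is necessarily uniformly disconnected, while the converse follows from the classical embedding of uniformly disconnected spaces into ultrametric Cantor sets, whose conformal dimension is zero.

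The hardest step is the first: extracting the discrete UWS property from the John condition. The John arcs live in $\Omega$, not on $\partial \Omega$, and the boundary can be extremely rough—even totally disconnected on large regions. Converting the interior separation produced by a John arc into a genuinely uniform well-spread cut-point condition on the boundary graphs requires careful use of complementary components of $\Omega$, planarity, and scale invariance. This is precisely the reason that a \emph{discrete} variant of Carrasco's UWS condition is introduced in the first place, and where most of the technical work will lie.
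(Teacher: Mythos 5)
Your proposal follows exactly the paper's route: Theorem~\ref{John1} is obtained by combining Theorem~\ref{Johnuws} (boundaries of John domains have the discrete UWS property, proved via Menger's theorem, planarity, and John arcs) with Theorem~\ref{cdim} (the discrete UWS property forces $\Cdim_{\AR} \in \{0,1\}$ via the Keith--Kleiner/Carrasco combinatorial modulus criterion), and your sketches of both ingredients are consistent with the paper's arguments. Note only that the first ingredient is Theorem~\ref{Johnuws}, not Proposition~\ref{uwsblowup} (which concerns weak tangents).
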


We should note that, in general, $\Cdim(X) \leq \Cdim_{\AR}(X)$. Thus, we also obtain $\Cdim(\partial \Omega) \in \{0,1\}$, as the conformal dimension cannot assume values in $(0,1)$ by the results in \cite{Kov}. If we assume in addition that $\partial \Omega$ is connected and not a singleton, then this gives $\Cdim(\partial \Omega) = \Cdim_{\AR}(\partial \Omega)=1$. 

Before moving on, let us mention a corollary that deals with the quasiconformal geometry of certain Julia sets. We noted above that if $p(z)$ is a semi-hyperbolic polynomial on $\hat{\C}$ of degree at least 2, then its unbounded Fatou component is a John domain \cite{CJY}. The Julia set $J(p)$ of $p$ is the boundary of this component, so Theorem \ref{John1} implies the following.

\begin{corollary}
If $p(z)$ is a semi-hyperbolic polynomial of degree at least 2 with $J(p)$ connected, then $\Cdim(J(p)) = \Cdim_{\AR}(J(p)) = 1$.
\end{corollary}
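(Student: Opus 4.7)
The plan is to apply Theorem \ref{John1} directly to the unbounded Fatou component $\Omega_\infty$ of $p$. By \cite{CJY}, since $p$ is semi-hyperbolic of degree at least $2$, every Fatou component is a John domain; in particular $\Omega_\infty$ is a John domain, and because $J(p)$ is assumed connected we have $J(p) = \partial \Omega_\infty$ (with $\Omega_\infty$ simply connected).

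To invoke Theorem \ref{John1} I must verify that $J(p)$ is uniformly perfect. This is a classical theorem of Ma\~n\'e and da Rocha: the Julia set of any rational map of degree at least $2$ is uniformly perfect in the spherical metric. Since $J(p)$ is connected and $p$ has degree at least $2$, $J(p)$ is infinite, so uniform perfectness applies nontrivially. Theorem \ref{John1} then yields $\Cdim_{\AR}(J(p)) \in \{0,1\}$, with the value $0$ occurring precisely when $J(p)$ is uniformly disconnected.

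Since $J(p)$ is connected and contains more than one point, it is not totally disconnected and therefore not uniformly disconnected. Consequently $\Cdim_{\AR}(J(p)) = 1$, and the remark following Theorem \ref{John1}---which uses \cite{Kov} to rule out values of $\Cdim$ in the open interval $(0,1)$ and to upgrade the Ahlfors-regular conformal dimension to the conformal dimension when the boundary is a nondegenerate continuum---gives $\Cdim(J(p)) = 1$ as well. The only input here beyond the main theorem and the citations in the excerpt is the Ma\~n\'e--da Rocha uniform perfectness statement, which is standard, so I do not anticipate any real obstacle: the corollary is a clean application of Theorem \ref{John1}.
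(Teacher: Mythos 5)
Your proposal is correct and follows essentially the same route as the paper: apply Theorem \ref{John1} to the unbounded Fatou component, which is a John domain by \cite{CJY} with boundary $J(p)$, and use connectedness to rule out the value $0$ and the remark after Theorem \ref{John1} to upgrade to $\Cdim$. The only cosmetic difference is that you invoke the Ma\~n\'e--da Rocha theorem for uniform perfectness, which is unnecessary here: since $J(p)$ is connected and has more than one point, it is automatically uniformly perfect, as noted in Section 2 of the paper.
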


We note that this statement is proven in M. Carrasco's PhD thesis \cite[Corollary 3.3]{CarPhD}, using the dynamics of $p$ on $J(p)$. One could therefore view Theorem \ref{John1} as a non-dynamical extension of this fact.

To establish Theorem \ref{John1}, we will introduce the notion of uniformly well-spread discrete cut points (which we call the \ti{discrete UWS property}). This is a direct discretization of the notion of uniformly well-spread cut points in a metric space (the \ti{UWS property}), which was recently introduced by M. Carrasco as a sufficient condition for a compact, doubling, linearly connected metric space to have conformal dimension equal to 1 \cite{CP14}. His motivation came from boundaries of one-ended hyperbolic groups, where the linear connectivity condition is automatically satisfied (but the boundary need not be planar). For us, $\partial \Omega$ is planar and, generally, is topologically uncomplicated (namely, $\Omega$ is connected), but linear connectivity is almost never satisfied. For example, if $\partial \Omega$ were a linearly connected Jordan curve, then it would be a quasi-circle, and the conformal dimension of $\partial \Omega$ would trivially be equal to 1. However, a simple discretization of the original UWS property works well for the objects we consider. The proof of Theorem \ref{John1} breaks into two parts.

\begin{theorem} \label{Johnuws}
If $\Omega \subset \hat{\C}$ is an $L$-John domain, then $\partial \Omega$ has the discrete UWS property, with constant depending only on $L$.
\end{theorem}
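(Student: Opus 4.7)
The plan is to exploit the twisted-cone structure of the $L$-John condition to build, at every location and scale, a short crosscut of $\Omega$ whose two endpoints on $\partial \Omega$ serve as discrete cut points separating large portions of the boundary.

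First, I would record the basic consequence of the $L$-John condition (the \emph{carrot} property): for every $x \in \Omega$ there is a rectifiable path $\gamma_x$ from $x$ to the base point $x_0$ with $\dist(\gamma_x(t), \partial \Omega) \geq \length(\gamma_x|_{[0,t]})/L$. As is classical, this implies that near any boundary point $p$ and at any scale $r$ one can find a ``deep'' anchor $x \in \Omega$ with $\dist(x,p) \approx r$ and $\dist(x, \partial \Omega) \gtrsim r/L$. Because $\hat{\C}$ is a surface, such deep points naturally produce short crosscuts of $\Omega$ encircling $p$.

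Next, given a boundary ball $B = B(a,r) \cap \partial \Omega$ that one wishes to separate from some other portion of $\partial \Omega$, I would produce a crosscut $\sigma$ of $\Omega$ with $\diam(\sigma) \leq \epsilon r$ for a controlled $\epsilon = \epsilon(L)$, such that $\sigma$ together with its two endpoints $z_1(\sigma), z_2(\sigma) \in \partial \Omega$ separates $B$ from its complement in $\partial \Omega$. The $\{z_i(\sigma)\}$, ranging over the crosscuts produced at a given scale in a given region, will be the candidate cut points for the discrete UWS property. The reason the cut property survives discretization is that $\sigma$ genuinely separates $\hat{\C}$: any $\delta$-chain in $\partial \Omega$ crossing $\sigma$ at scale $\delta \ll \epsilon r$ must pass within $\delta$ of some $z_i(\sigma)$, since otherwise it could be completed to a continuous path through $\hat{\C} \setminus \sigma$ linking the two sides.

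Finally, to establish the quantitative ``well-spread'' portion, I would iterate the construction dyadically, producing at each scale a maximal packing of crosscuts that in total yield a uniformly separated net of cut points whose cardinality in any ball is controlled in terms of $L$ alone. The main obstacle I anticipate is arranging the crosscuts so that one gets a genuine spread net of cut points rather than a clustered collection, and so that no single discrete chain can slip through a gap between consecutive cut balls. I expect this to require a Whitney-type decomposition of $\Omega$ near $\partial \Omega$, where one uses the quasi-hyperbolic geometry coming from the John condition to place crosscuts transversely to the ``radial'' direction of the carrot paths. Once the crosscuts are well-placed, the uniformity of $L$ should propagate to a uniformity in the discrete UWS constant, yielding the conclusion.
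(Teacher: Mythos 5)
There is a genuine gap, and it sits exactly where the real difficulty of the theorem lies: you never explain why a set of \emph{boundedly many} points, with bound depending only on $L$, suffices to block every discrete $\e r$-chain crossing the annulus $\cl{B}(x,2r)\setminus B(x,r)$. Your plan posits a small crosscut $\sigma$ whose two endpoints separate $B(x,r)\cap\partial\Omega$ from the rest, and then a ``maximal packing'' of such crosscuts with cardinality controlled by $L$ alone; but if $\partial\Omega$ has many distinct strands crossing the annulus (which the John condition does not forbid outright), each strand needs its own cut point, and bounding the number of essentially distinct strands is precisely the content of the theorem --- it cannot be assumed via a packing whose cardinality bound is asserted rather than proved. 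The paper handles this by dualizing: a bounded separating set exists if and only if there are boundedly many pairwise disjoint crossing vertex paths (Menger's theorem on a graph built from a $2\e r$-net of $\partial\Omega\cap B(x,3r)$), and the disjoint crossing paths are then counted by attaching to each one an $L$-John arc whose tip lies within $5\e r$ of that path; the John condition forces such an arc to contain a point of distance $\gtrsim r/L$ from $\partial\Omega$ inside the annulus, producing a disk of radius $r/12L$ in $\Omega\cap B(x,2r)$, and after discarding paths whose John arcs come $5\e r$-close to other paths (each can do so for at most $C(L)$ of them), a definite proportion of these disks are pairwise disjoint, so an area bound gives the count. Nothing in your proposal plays the role of either the Menger duality or this area/counting step.

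The separation mechanism you describe is also flawed as stated. A crosscut is an arc, and an arc does not separate $\hat{\C}$, so the assertion that ``$\sigma$ genuinely separates $\hat{\C}$'' is false; at best $\sigma$ separates $\Omega$, and only when $\Omega$ is simply connected, which is not part of the hypothesis (John domains need not be simply connected, and $\partial\Omega$ may even be disconnected --- the theorem explicitly covers the uniformly disconnected case). Separating $\Omega$ does not separate $\partial\Omega$, and even granting a separating Jordan curve built from $\sigma$ and boundary pieces, a discrete chain in $\partial\Omega$ can hop across $\sigma$ at an interior point of $\sigma$ that happens to lie within $\delta$ of $\partial\Omega$: for a crosscut of diameter $\le \e r$ there is no a priori lower bound on $\dist(\sigma\setminus\{z_1,z_2\},\partial\Omega)$, so the chain need not come near the designated endpoints. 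Likewise, the claim that a deep anchor point ``naturally produces short crosscuts encircling $p$'' that separate the local boundary is unjustified: it already presupposes a uniformly bounded local strand structure, which is what must be proved. To repair the argument you would need both a quantitative device to bound the number of crossing strands (some analogue of the disjoint-disk area count) and a combinatorial step converting that bound into a small discrete cut set (some analogue of Menger's theorem); as written, the proposal contains neither.
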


\begin{theorem} \label{cdim}
If $X$ is a compact, doubling, uniformly perfect metric space and has the discrete UWS property, then $\Cdim_{\AR}(X) \in \{0,1\}$. It is equal to $0$ if and only if $X$ is uniformly disconnected.
\end{theorem}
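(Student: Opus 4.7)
\textbf{The plan} is to split the theorem into three parts. First, if $X$ is uniformly disconnected, then since $X$ is also doubling, it is quasisymmetrically equivalent to a subset of a standard Cantor set, which admits Ahlfors regular metrics of arbitrarily small dimension; hence $\Cdim_{\AR}(X) = 0$. Second, if $X$ is uniformly perfect but not uniformly disconnected, a standard modulus / curve-family argument yields $\Cdim(X) > 0$, and then Kovalev's gap theorem forces $\Cdim(X) \geq 1$, so a fortiori $\Cdim_{\AR}(X) \geq 1$. Neither of these two directions uses the discrete UWS property; the entire content of the theorem lies in the upper bound $\Cdim_{\AR}(X) \leq 1$ in the non-uniformly-disconnected case.

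For the upper bound, I would follow the combinatorial modulus framework developed by Keith--Kleiner and refined by Carrasco. Fix a scale factor $a > 1$ and, for each $k \geq 0$, choose a maximal $a^{-k}$-separated net $V_k \subset X$; let $G_k$ be the associated approximation graph (with edges joining net points whose balls at scale $a^{-k}$ interact). For $p \geq 1$, let $M_k(p)$ denote the supremum over balls $B \subset X$ of the combinatorial $p$-modulus of the family of chains in $G_k$ that cross a fixed annulus around $B$. The CKK criterion asserts that, in a doubling uniformly perfect space, if there exist $\tau < 1$ and $n \in \N$ with $M_{k+n}(p) \leq \tau \cdot M_k(p)$ for all $k$, then $\Cdim_{\AR}(X) \leq p$. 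My goal is to verify this decay at exponent $p = 1$.

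To prove the decay, I would use the discrete UWS property directly. Given an admissible weight $\rho : V_{k+n} \to [0,\infty)$ for the chain family at the fine scale, the property provides, in each relevant annulus, a family of well-spread \emph{discrete cut points} through which every such chain must pass. Partitioning the chains according to which cut point they traverse, and averaging $\rho$ over this partition, produces a weight at the coarse scale whose $\ell^1$-mass splits into a definite number of parallel contributions. The quantitative spreading of the cut points guarantees that each contribution carries only a fixed fraction of the total mass, so that reassembling them yields an admissible coarse-scale weight with $\ell^1$-norm shrunk by a factor $\tau < 1$. Integrating over balls $B$ gives the required inequality $M_{k+n}(1) \leq \tau M_k(1)$, which feeds into the CKK machinery to produce an Ahlfors regular metric of dimension arbitrarily close to $1$.

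The main obstacle I expect is precisely this adaptation of Carrasco's original argument to the purely combinatorial setting. Carrasco works with actual rectifiable curves, and linear connectivity lets him route such curves through genuine cut points; here, $X$ may be highly disconnected at small scales, and chains in $G_k$ only approximate paths. One must verify that ``discrete cut points'' at scale $k$ truly separate the relevant connected component of the graph $G_k$, and that the quantitative well-spreading condition interacts cleanly with the $\ell^1$-averaging step. Tuning the scale factor $a$ against the parameters in the discrete UWS property will be the most delicate bookkeeping. Once those combinatorial estimates are established, the passage from modulus decay to $\Cdim_{\AR}(X) \leq 1$ is a black-box invocation of the Keith--Kleiner--Carrasco framework.
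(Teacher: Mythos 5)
Your overall frame (reduce to the Keith--Kleiner--Carrasco combinatorial characterization, with all the content in the upper bound) matches the paper, but the central mechanism you propose cannot work. You aim to prove a genuine contraction $M_{k+n}(1)\leq \tau M_k(1)$, $\tau<1$, at the exponent $p=1$ itself. Such decay of the combinatorial $1$-modulus is false for spaces satisfying the hypotheses: already for $X=\Sp^1$, which is a quasi-circle and hence has the discrete UWS property, every annulus $\cl{B}(x,2r)\setminus B(x,r)$ is crossed by two disjoint vertex paths of the approximating graphs, so every admissible weight has total mass at least $2$ and $M_1(k)\geq 2$ for all large $k$; it never tends to zero, so no such $\tau$ exists. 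This is consistent with the characterization $\Cdim_{\AR}(X)=Q_N=\inf\{p:M_p=0\}$: to get $Q_N\leq 1$ one must show $M_p(k)\to 0$ for every $p>1$, not at $p=1$. So the averaging step where the discrete UWS property is supposed to shrink the $\ell^1$-mass by a fixed fraction is exactly where the argument breaks down, and no bookkeeping with the scale factor $a$ can repair it.

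The idea you are missing is the telescoping, nested-annuli use of the discrete UWS property, which is how the paper (following Carrasco) obtains quantitative decay for each $p>1$. Fix $p>1$, $m$, $x\in P_m$ and $k$ large. One application of the property gives a cut set $K\subset B(x,2a^{-m})$ with $\#K\leq C$, hence a set $\tilde K$ of $O(1)$ vertices of $G_{m+k}$ that every crossing vertex path must hit; by itself this only bounds the modulus by a constant. The gain comes from applying the property again, for each $z\in\tilde K$, to the $\sim k/2$ nested annuli $\cl{B}(z,2a^{-i}a^{-m})\setminus B(z,a^{-i}a^{-m})$, $1\leq i\leq k/2$: this yields $\sim k$ pairwise disjoint vertex sets of size $O(1)$, each of which every path through $z$ that escapes $B(z,a^{-m}/2)$ must meet. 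Putting weight $\approx 1/k$ on their union gives an admissible weight of $p$-mass $\lesssim k\cdot k^{-p}=k^{1-p}\to 0$, so $M_p(k)\to 0$ for each $p>1$ and $\Cdim_{\AR}(X)\leq 1$. Two smaller points: your lower-bound route (a ``curve-family argument'' giving $\Cdim(X)>0$, then Kovalev) is shaky, since $X$ can be totally disconnected without being uniformly disconnected and then contains no nonconstant curves; the paper instead notes that if $X$ is not uniformly disconnected then the families $\Gamma_k(x)$ are nonempty at all scales, which forces $\mod_p\geq 1$ for $0<p<1$ and hence $Q_N\geq 1$, with no appeal to Kovalev. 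Your David--Semmes argument for the uniformly disconnected case is correct, though the paper gets that case for free because then $\Gamma_k(x)=\emptyset$ for large $k$.
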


The discrete UWS property is a central ingredient in the proof of our main result. We introduce it in Section \ref{duws} and establish some basic properties before proving Theorem \ref{cdim}. In particular, we discuss its relationship to the standard UWS property and its quasisymmetric invariance in the linearly connected setting.

In Section \ref{Johnsec} we prove Theorem \ref{Johnuws}. In many ways, this is the heart of the paper, and we think it justifies our subsequent study of the discrete UWS property. In Section \ref{WTsec}, we turn our attention to the infinitesimal geometry of metric spaces with the discrete UWS property by studying their weak tangents. For example, we will establish the following result.

\begin{prop} \label{uwsblowup}
Let $X$ be a complete, connected, doubling metric space that has the discrete UWS property with constant $C$. Then every weak tangent of $X$ has at most $N$ connected components, where $N$ depends only on $C$ and on the doubling constant of $X$.
\end{prop}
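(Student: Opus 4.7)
The plan is to argue by contradiction: assuming a weak tangent $Y$ has too many components, I would use the discrete UWS property to extract, near a suitable basepoint in $X$, a collection of well-spread discrete cut points whose cardinality and local ``valence'' are each bounded by the doubling constant, together yielding the bound.

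\textbf{Extracting cut points.} Realize $Y$ as a pointed Gromov--Hausdorff limit $(X, x_k, \lambda_k d) \to (Y, y_0)$ with $\lambda_k \to \infty$. Fix $R > 0$; let $V_1, \dots, V_M$ be components of $Y$ meeting $B_Y(y_0, R)$, choose $z_i \in V_i \cap B_Y(y_0, R)$, and let $\eta := \min_{i \neq j} d_Y(z_i, z_j) > 0$. Approximate by $z_i^k \in X$ with $\lambda_k d(z_i^k, z_j^k) \to d_Y(z_i, z_j)$. For each $i \geq 2$, apply the discrete UWS property to $(z_1^k, z_i^k)$ at scale $t_k := \eta/(8C\lambda_k)$, obtaining a $t_k$-chain whose cut points in the $t_k$-graph of $X$ appear with density $\geq 1/C$ and are pairwise at distance $\geq t_k$. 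Let $c_i^k$ be the first such cut point along the chain starting from $z_1^k$. The density condition forces $d(c_i^k, z_1^k) = O(Ct_k)$, so after rescaling by $\lambda_k$ the $c_i^k$ lie in a ball of radius $O(\eta)$ around $z_1^k$ and are pairwise $\geq \eta/(8C)$ apart when distinct; by doubling, the number of distinct values is at most $K := D^{O(\log C)}$.

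\textbf{Valence bound and conclusion.} To bound $M$, I would show that $i \mapsto c_i^k$ is at most $K' = K'(D)$-to-one. If $c_i^k = c_j^k = c$, then $z_i^k, z_j^k$ lie in $t_k$-components of $X \setminus \{c\}$ distinct from the one containing $z_1^k$. The central claim is that $V_i \neq V_j$ in $Y$ forces these two points to lie in \emph{different} such $t_k$-components, provided the representatives are chosen so that $\dist(V_i, V_j) > \eta/(8C)$; otherwise a $t_k$-chain joining $z_i^k$ and $z_j^k$ while avoiding $c$ would rescale to an $\eta/(8C)$-chain in $Y$ from $z_i$ to $z_j$, contradicting this separation. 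The number of $t_k$-components adjacent to $c$ within $B(c, 2t_k)$ is at most $D$ by doubling, so $K' \leq D$. Thus $M - 1 \leq K \cdot K' =: N(C,D)$; since $R$ was arbitrary, this bounds the total number of components of $Y$.

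\textbf{Main obstacle.} The delicate step is the valence bound: arranging the representatives $z_i$ and the scale $t_k$ so that distinct limit components $V_i, V_j$ do, in fact, translate to different $t_k$-components in $X \setminus \{c\}$. Components of $Y$ may be tangent (i.e., at $Y$-distance zero) despite being distinct, and in that case the clean hypothesis $\dist(V_i, V_j) > \eta/(8C)$ can fail. Resolving this --- for instance, by iteratively refining the choice of $\eta$ or the representatives, or by invoking a strengthened form of the well-spread property that survives the passage to weak tangents --- is the main technical challenge of the argument.
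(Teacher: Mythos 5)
There is a genuine gap, and it occurs in both halves of your outline. First, the step you call ``extracting cut points'' attributes to the discrete UWS property structure it simply does not have: the property supplies, for each annulus $\cl{B}(x,2r)\setminus B(x,r)$ and each $0<\epsilon<1$, a set $K$ of at most $C$ points whose $\epsilon r$-neighborhood every crossing discrete $\epsilon r$-path must meet. It says nothing about a pair of points $(z_1^k,z_i^k)$, nothing about cut points occurring along a chain ``with density $\geq 1/C$,'' and nothing about cut points being pairwise $t_k$-separated; these are not consequences of the definition, so the estimates built on them (e.g. $d(c_i^k,z_1^k)=O(Ct_k)$ and the count of distinct $c_i^k$) do not follow from the hypothesis. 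Second, the ``valence bound,'' which you yourself flag as the main technical challenge, is exactly where the argument is incomplete: nothing rules out that distinct limit components $V_i\neq V_j$ give rise to the same $t_k$-component of $X\setminus\{c\}$, and the proposed remedy (choosing representatives with $\dist(V_i,V_j)>\eta/(8C)$) is not available when components come arbitrarily close. So as written the proposal does not prove the proposition.

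The paper's proof sidesteps both issues. It first shows (Lemma~\ref{weaktanuws}) that the weak tangent $Y$ itself inherits the discrete UWS property with constant depending only on $C$ and the doubling constant, so all subsequent work happens in $Y$ at a single scale, with no need to track cut points of $X$ along chains. It then uses the connectedness of $X$ in an essential way (your draft uses it only to produce chains): discrete chains in $X$ reaching arbitrarily far pass to the tangent and show that \emph{every} component of $Y$ is unbounded. Hence any $k$ components all cross a fixed annulus $\cl{B}(p,2R)\setminus B(p,R)$; their traces in the compact annulus $\cl{B}(p,3R)\setminus B(p,R/2)$ are disjoint compact sets, so they are at positive pairwise distance --- this is what dissolves the tangency worry, since only finitely many components and a compact region are in play. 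Choosing $\epsilon R$ smaller than a quarter of that minimal distance, each component furnishes a crossing discrete $\epsilon R$-path, and the $2\epsilon R$-neighborhoods of these paths are pairwise disjoint; a single UWS cut set for that annulus in $Y$ must then contain a point near each path, forcing $k\leq C'$. If you want to salvage your scale-by-scale approach in $X$, you would essentially need to reprove these two ingredients (inheritance of the UWS property by the tangent, and unboundedness of its components), at which point the paper's argument is the shorter path.
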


In particular, if $\Omega$ is a John domain with $\partial \Omega$ connected, then every weak tangent of $\partial \Omega$ has a uniformly bounded number of connected components (Theorem \ref{WTjohn}). One should compare this to the following asymptotic characterization of quasi-circles: a Jordan curve is a quasi-circle if and only if every weak tangent is connected (see Theorem \ref{metriccircle}).

Finally, in Section \ref{Holdersec}, we discuss some problems related to the conformal dimension of boundaries of H\"older domains. We give an example of a H\"older domain in $\C$ whose boundary has Hausdorff dimension equal to 1 but Ahlfors-regular conformal dimension equal to 2. We end with some questions about whether this behavior is typical for the complementary components of $\SLE$ traces.

\subsection*{Acknowledgements}

The author thanks Mario Bonk for many good discussions about the topics that appear in this paper. In particular, Theorem \ref{metriccircle} was motivated by a question of his, and Lemma \ref{hops} grew out of his suggested approach to that question. This provided the kernel from which the other arguments and results in this paper eventually grew. Additional thanks go to Steffen Rhode and Huy Tran for helpful discussions about trees and $\SLE$ curves, to Hrant Hakobyan, John Mackay, Jeremy Tyson, and Dimitrios Ntalampekos for feedback on an earlier version of this paper, and to the anonymous referee for helpful suggestions.

\section{Some definitions and background}

Let us recall some standard notation and definitions in metric geometry. A metric space $(X,d)$ is doubling if there is a constant $C$ for which every ball in $X$ of radius $r$ can be covered by at most $C$ balls of radius $r/2$. Note that a doubling metric space is totally bounded, so every complete and doubling metric space is proper: closed balls are compact. 

We say that $X$ is uniformly perfect if there is $c>0$ such that the closed annulus $\cl{B}(x,r) \backslash B(x,cr) \neq \emptyset$ for each $x \in X$ and $0<r \leq \diam(X)$. For example, if $X$ is connected and has at least two points, then it is uniformly perfect. A much stronger condition is linear connectivity. For $\lambda \geq 1$, we say that $X$ is $\lambda$-linearly connected if for any two points $x,y \in X$ there is a compact connected set $E \subset X$ with $x,y \in E$ and $\diam(E) \leq \lambda d(x,y)$.

Following standard notation, we will use $\dim_H(X)$ to denote the Hausdorff dimension of $X$. For $Q>0$, the metric space $X$ is said to be Ahlfors $Q$-regular if it supports a Borel regular measure $\mu$ such that
$$C^{-1} r^Q \leq \mu(B(x,r)) \leq Cr^Q$$
for all balls in $X$ of radius $0<r\leq \diam(X)$, where $C$ is a uniform constant. In this case, $\dim_H(X) = Q$. It is not difficult to see that an Ahlfors regular metric space is necessarily doubling and uniformly perfect.

A finite sequence of points $x_0,x_1,\ldots,x_\ell$ in $X$ is called a discrete $\delta$-path if $d(x_i,x_{i-1}) \leq \delta$ for each $1\leq i \leq \ell$. Abusing standard terminology, we say that such a discrete path joins $x_0$ and $x_\ell$. The metric space $X$ is uniformly disconnected if there is $\e >0$ such that, for each $x,y \in X$ distinct, there is no discrete $\e d(x,y)$-path joining $x$ and $y$.

Now, we turn our attention to sets in the Riemann sphere. When working in this setting, we will always use the spherical metric on $\hat{\C}$, though on occasion we will still denote it by $| x- y|$. Let $\Omega \subset \hat{\C}$ be a domain. For $z \in \Omega$, let $\delta_{\Omega}(z) = \dist(z,\partial \Omega)$ be the distance from $z$ to the boundary of $\Omega$, measured of course in the spherical metric. If $\alpha$ is an arc in $\Omega$, we will use $\alpha[z,z']$ to denote the closed sub-arc joining two points $z,z' \in \alpha$.

Let $\alpha \subset \Omega$ be an arc with endpoints $z_0$ and $z_1$, and let $L \geq 1$. We say that $\alpha$ is an $L$-John arc with base-point $z_0$ if
$$\diam(\alpha[z,z_1]) \leq L\delta_{\Omega}(z)$$
for each $z \in \alpha$. Often we refer to the other endpoint, $z_1$, as the tip of $\alpha$. Geometrically, a John arc is the core of a twisted cone in $\Omega$, namely the union of all balls $B(z, \diam(\alpha[z,z_1])/L)$ for $z \in \alpha$. 

\begin{definition}
A domain $\Omega \subset \hat{\C}$ is called an $L$-John domain if there is $z_0 \in \Omega$ such that every $z \in \Omega$ can be joined to $z_0$ by an $L$-John arc with base-point $z_0$ and tip $z$.
\end{definition}

If $\Omega$ is a John domain with respect to some base-point, then it is also a John domain with respect to any other base-point, although the constant may change. The John condition functions, in some ways, as a one-sided quasi-disk condition. For example, if $\partial \Omega$ is a Jordan curve and both complementary components are John domains, then $\partial \Omega$ is a quasi-circle (and the complementary components are, in fact, quasi-disks) \cite[Theorem 9.3]{NV}. More generally, a simply connected domain $\Omega \subset \hat{\C}$ is a John domain if and only if $\hat{\C} \backslash \Omega$ is linearly connected \cite[Theorem 4.5]{NV}. The main idea behind these properties is that John domains may have inward-pointing spikes and bubbles but not outward-pointing ones.

A more general class of domains are the H\"older domains, which verify a logarithmic growth condition on the quasi-hyperbolic metric. Namely, for $z,z' \in \Omega$, the quasi-hyperbolic distance is
$$\rho(z,z') = \inf_{\gamma} \int_{\gamma} \frac{ds}{\delta_{\Omega}(w)},$$
where the infimum is taken over rectifiable paths in $\Omega$ that join $z$ and $z'$. Once again, we remark that distances are computed in the spherical metric. This defines a complete metric on $\Omega$ as long as $\Omega \neq \hat{\C}$. In fact, $\rho$ is bi-Lipschitz equivalent to the hyperbolic metric on $\Omega$ whenever $\Omega$ is simply connected.

We say that $\Omega$ is a H\"older domain if there is a base-point $z_0 \in \Omega$ and constants $C_1,C_2$ such that
$$\rho(z_0,z) \leq C_1 \log \lp \frac{1}{\delta_\Omega(z)} \rp  + C_2$$
for each $z \in \Omega$. This condition is also independent of the base-point, although the constants may differ. It is not difficult to show that every John domain is a H\"older domain, but the converse is not true. When $\Omega$ is simply connected (and not equal to $\hat{\C}$ or $\C$), the H\"older condition is equivalent to H\"older continuity of the conformal map $f\colon \D \rightarrow \Omega$ \cite{BP}. 

Every quasi-disk is a John domain and, therefore, is also a H\"older domain. An important difficulty that we will face in this paper is that boundaries of John domains, unlike boundaries of quasi-disks, need not be linearly connected (again, the John condition allows for inward-pointing cusps).

\subsection{Conformal dimension}
Let $(X,d)$ and $(Y,d')$ be metric spaces. A homeomorphism $f \colon X \rightarrow Y$ is quasisymmetric if there is a control function $\eta \colon [0,\infty) \rightarrow [0,\infty)$, i.e., an increasing homeomorphism, such that
$$\frac{d'(f(x),f(y))}{d'(f(x),f(z))} \leq \eta \lp \frac{d(x,y)}{d(x,z)} \rp$$
for all distinct points $x,y,z \in X$. We should remark that the inverse of a quasisymmetric map is quasisymmetric with control function $1/\eta^{-1}(1/t)$. In the case that $X=Y=\hat{\C}$, the class of quasisymmetric maps is precisely the class of quasiconformal maps, in any of the standard definitions of quasiconformal. For our purposes, one can simply take this to be the definition of a quasiconformal homeomorphism of $\hat{\C}$.

Given a complete and doubling metric space $(X,d)$, we use $\mathcal{J}(X)$ to denote the conformal gauge of $X$: the set of all (isomorphism classes of) metric spaces $Y$ that are quasisymmetrically equivalent to $X$. Obviously, $X \in \mathcal{J}(X)$, so the conformal gauge is always non-empty. If, in addition, $X$ is uniformly perfect, then we use $\mathcal{J}_{\AR}(X)$ to denote the the subset of $\mathcal{J}(X)$ consisting of metric spaces that are Ahlfors regular. In this case, $\mathcal{J}_{\AR}(X)$ is non-empty; see for example \cite[Theorem 14.16]{Hein01}, where it is shown that there is $Y \in \mathcal{J}_{\AR}(X)$ which is a closed subset of some $\R^n$. For the most part, we will restrict ourselves to complete, doubling, uniformly perfect metric spaces.

The conformal dimension of $X$ is defined to be
$$\Cdim(X) = \inf \{ \dim_H(Y) : Y \in \mathcal{J}(X) \},$$
and the Ahlfors-regular conformal dimension is
$$\Cdim_{\AR}(X) = \inf \{ \dim_H(Y) : Y \in \mathcal{J}_{\AR}(X) \}.$$
Note that both quantities are finite (when $X$ is doubling, complete, and uniformly perfect), and they are, by definition, quasisymmetric invariants of $X$. It is also obvious that $\Cdim(X) \leq \Cdim_{\AR}(X)$. We should note that if $X \subset \R^n$ is closed and uniformly perfect, then $\Cdim_{\AR}(X) \leq n$ \cite[Corollary 14.17]{Hein01}. Estimating these quantities (even for self-similar spaces) is typically difficult, though there are large classes of sets where it can be done. Particularly relevant for us is the recent work of M. Carrasco, S. Keith, and B. Kleiner, which characterizes the Ahlfors-regular conformal dimension as a certain critical exponent arising from combinatorial modulus estimates on annuli. To describe this, we must introduce more definitions.

\subsection{Critical exponents and the UWS property}

Let $G = (V,E)$ be a finite graph with vertex set $V$ and edge set $E$. A vertex path in $G$ is a (finite) sequence of vertices for which any two consecutive vertices are joined by an edge. Let $\Gamma$ be a collection of vertex paths in $G$. A weight function $\rho \colon V \rightarrow [0,\infty]$ is admissible for $\Gamma$ if 
$$\sum_{v \in \gamma} \rho(v) \geq 1$$
for each $\gamma \in \Gamma$. The combinatorial $p$-modulus of $\Gamma$ is then defined to be
$$\mod_p(\Gamma, G) = \inf_{\rho} \sum_{v \in V} \rho(v)^p,$$
where the infimum is taken over all weight functions $\rho$ that are admissible for $\Gamma$. This is a combinatorial version of the standard notion of $p$-modulus for path families in metric spaces \cite[Chapter 7]{Hein01}.

Suppose that $X$ is a compact, doubling, and uniformly perfect metric space. Let us form discrete approximations to $X$ as follows. Fix $a >1$ and $\lambda \geq 32$. For each $k \in \N$, let $P_k$ be a maximal $a^{-k}$-separated set in $X$, so that $P_k$ is finite by compactness and 
$$X = \bigcup_{x \in P_k} B(x,a^{-k})$$
by maximality. Now define $G_k$ to be the graph with vertex set $P_k$, where we connect two vertices $x,y \in P_k$ by an edge if 
$$B(x,\lambda a^{-k}) \cap B(y,\lambda a^{-k}) \neq \emptyset.$$
For each fixed $m \in \N$ and $x \in P_m$, we use $\Gamma_k(x)$ to denote the family of vertex paths in $G_{m+k}$ that join $P_{m+k} \cap B(x,a^{-m})$ to $P_{m+k} \cap (X \backslash \cl{B}(x, 2a^{-m}))$. Observe that each vertex path in $\Gamma_k(x)$ forms a discrete $2\lambda a^{-m-k}$-path in the metric space $X$ that ``crosses" the annulus $\cl{B}(x,2a^{-m}) \backslash B(x,a^{-m})$. We then define
$$M_p(k) = \sup_{m \in \N} \sup_{x \in P_m} \mod_p(\Gamma_k(x),G_{m+k})$$
and $M_p = \liminf_{k \rightarrow \infty} M_p(k)$. It is not difficult to see that, for each $k$, the quantity $M_p(k)$ is non-increasing in $p$, and therefore $M_p$ is non-increasing as well. Indeed, any optimal weight function $\rho$ for $\mod_p(\Gamma_k(x),G_{m+k})$, which always exists because $G_k$ is finite, necessarily takes values in $[0,1]$. Consequently, we can define the critical exponent to be $Q_N = \inf \{ p : M_p =0\}$.

\begin{theorem}[Kleiner--Keith \cite{KK}; Carrasco {\cite[Theorem 1.2]{CP13}}]
If $X$ is compact, doubling, and uniformly perfect, then $Q_N = \Cdim_{\AR}(X)$. In particular, the critical exponent does not depend on our choices of $a$, $\lambda$, or the sets $P_k$. 
\end{theorem}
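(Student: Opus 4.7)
The plan is to establish the two inequalities $Q_N \leq \Cdim_{\AR}(X)$ and $\Cdim_{\AR}(X) \leq Q_N$; the stated independence of $Q_N$ from the auxiliary choices of $a$, $\lambda$, and $\{P_k\}$ falls out automatically from the equality. The underlying reason this equality should hold is that the combinatorial modulus of the families $\Gamma_k(x)$ is a quasisymmetric quasi-invariant of $X$: when $f\colon X \to Y$ is a quasisymmetry between doubling, uniformly perfect spaces, one can match a graph $G_k(X)$ with a graph $G_{k'}(Y)$ for some $k' = k'(k)$ so that admissible weights transfer and moduli become comparable, using that quasisymmetries between doubling spaces distort balls in a controlled way.

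For the lower bound $Q_N \leq \Cdim_{\AR}(X)$, I would take any $Y \in \mathcal{J}_{\AR}(X)$ that is Ahlfors $Q$-regular with measure $\mu$ and use $\mu$ to build admissible weights on $G_{m+k}$. Concretely, up to a universal normalization, set
$$\rho(v) \;=\; \frac{\mu\bigl(B(f(v), a^{-(m+k)})\bigr)^{1/Q}}{\diam\bigl(f(\cl{B}(x,2a^{-m}))\bigr)}$$
for $v \in P_{m+k}$ and a fixed center $x \in P_m$. Any vertex path $\gamma \in \Gamma_k(x)$ maps under $f$ to a discrete path crossing the quasisymmetric image of the annulus about $x$; Ahlfors $Q$-regularity and doubling force $\sum_{v \in \gamma} \rho(v) \geq c > 0$, which makes $\rho$ admissible after adjusting the normalization. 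On the other hand, the $Q$-mass $\sum_v \rho(v)^Q$ bounds a sum of essentially disjoint measures over a set comparable to the annulus, which by Ahlfors regularity is $O(1)$ at $p = Q$ and decays geometrically in $k$ at any $p > Q$. Hence $M_p = 0$ for every $p > Q$, giving $Q_N \leq Q$, and since $Y$ was arbitrary, $Q_N \leq \Cdim_{\AR}(X)$.

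For the upper bound $\Cdim_{\AR}(X) \leq Q_N$, fix $p > Q_N$, so that $M_p(k_i)$ is arbitrarily small along a subsequence, and choose admissible weights $\rho_{k_i}$ of vanishing $p$-mass on each family $\Gamma_{k_i}(x)$. Following the Keith--Kleiner strategy, I would use these weights to produce a new metric $d_p \in \mathcal{J}_{\AR}(X)$ with $\dim_H(X,d_p) \leq p$. The construction assembles the sets $P_k$ into a Gromov hyperbolic filling $Z$ whose edges encode the ball-overlap combinatorics of the graphs $G_k$, with edge lengths built from $-\log \rho_{k_i}$. The smallness of the $p$-mass translates, via a concatenation and telescoping argument across scales, into the visual-parameter control needed so that the visual metric on $\bdry Z$, which is canonically identified with $X$ up to quasisymmetry, is Ahlfors $p$-regular. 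Sending $p \searrow Q_N$ then yields $\Cdim_{\AR}(X) \leq Q_N$.

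The main obstacle is this upper-bound construction: one must knit together the pointwise modulus bounds at each scale and each center $x$ into a single globally admissible weight, and then verify that the resulting visual metric is simultaneously Ahlfors $p$-regular and quasisymmetric to the original $X$. Both steps require careful combinatorial bookkeeping across annular scales, and Carrasco's refinement of the Keith--Kleiner argument is what makes the passage from ``small modulus'' to ``regular metric in the gauge'' go through cleanly. Uniform perfectness is essential to keep the annuli $\cl{B}(x,2a^{-m}) \setminus B(x,a^{-m})$ nondegenerate so that $\Gamma_k(x)$ is nontrivial, and doubling underlies both the hyperbolic filling and the quasisymmetric quasi-invariance of combinatorial modulus that drove the lower bound.
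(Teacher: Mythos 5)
This statement is not proved in the paper at all: it is quoted as background, attributed to Keith--Kleiner \cite{KK} (unpublished; cf.\ \cite{BourK}) and to Carrasco \cite{CP13}, whose article contains the detailed proof the paper relies on. So there is no internal argument to compare yours against; what you have written is an outline of the external proof, and it should be judged on whether it would stand on its own. The lower bound $Q_N \leq \Cdim_{\AR}(X)$ is right in spirit but your weight is not quite the correct object: $B(f(v),a^{-(m+k)})$ is a ball in $Y$ whose radius is a scale belonging to $X$, which is meaningless after a quasisymmetric change of metric. The standard choice is $\rho(v) \approx \diam\bigl(f(B(v,\lambda a^{-(m+k)}))\bigr)/\diam\bigl(f(B(x,2a^{-m}))\bigr)$, with admissibility coming purely from quasisymmetric distortion (the images of the vertices of a crossing path have diameters summing to at least the gap between the images of the two annulus boundaries), and with Ahlfors $Q$-regularity entering only to convert $\diam(f(B))^Q$ into $\mu(f(B))$ and bound the $Q$-mass by a bounded-overlap count; your sketch conflates these two roles, though the repair is routine.

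The genuine gap is the converse inequality $\Cdim_{\AR}(X) \leq Q_N$. Your paragraph names the ingredients of the Keith--Kleiner/Carrasco construction (hyperbolic filling, edge lengths from $-\log\rho$, visual metric, telescoping across scales) but defers exactly the two verifications that constitute the theorem: how to glue the scale-by-scale, center-by-center admissible weights with small $p$-mass into a single globally defined distance (or quasi-metric and measure) on the boundary of the filling, and why that space is simultaneously Ahlfors $p$-regular and quasisymmetric to $X$. Saying that ``Carrasco's refinement \ldots is what makes the passage \ldots go through cleanly'' is citing the result rather than proving it; the subadditivity/regularization of the weights across scales, the passage from $\liminf_k M_p(k)=0$ to a uniformly controlled family of weights, and the Frostman-type mass distribution argument establishing $p$-regularity are all nontrivial and are where the hypotheses (doubling, uniform perfectness) are actually consumed. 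As an expository road map your proposal is accurate and matches the strategy of \cite{CP13}, and it is consistent with the paper's own treatment of the theorem as a quoted fact; but as an independent proof it is incomplete in its second half.
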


This result was obtained by Kleiner--Keith in unpublished form (see \cite[Corollary 3.7]{BourK} and the subsequent remarks). A detailed proof by Carrasco can be found in \cite{CP13}; our notation closely follows his.

The critical exponent $Q_N$ is essentially a combinatorial quantity: it is calculated by solving a modulus problem on graph approximations to the underlying metric space $X$. However, the vertex paths in these graph approximations might look very different from paths in $X$. For example, the vertex paths in $G_k$ might ``jump" between connected components of $\cl{B}(x,2r) \backslash B(x,r)$, thereby giving vertex paths that do not correspond to actual paths in $X$. One can, however, define a modified critical exponent $Q_X$ that takes into account only those vertex paths coming from true paths in $X$, as in \cite[Section 3.5]{CP13}. In general $Q_X \leq Q_N$, but equality holds when $X$ is linearly connected \cite[Theorem 3.12]{CP13}. Thus, in the linearly connected setting, one can calculate the Ahlfors-regular conformal dimension using path families in $X$.

In light of this, Carrasco introduced the notion of ``uniformly well-spread cut points" in order to study linearly connected metric spaces with Ahlfors-regular conformal dimension equal to 1 \cite{CP14}. A metric space $X$ has the UWS property if there is a constant $C \geq 1$ such that for any $x \in X$ and $0< r < C^{-1}\diam(X)$, there is a finite set $K \subset \cl{B}(x,2r)$, with $\#K \leq C$, for which no connected component of $X \backslash K$ can intersect both $B(x,r)$ and $X \backslash \cl{B}(x,2r)$. More colloquially, this means that we can disconnect the ``complementary regions" of any given annulus $\cl{B}(x,2r) \backslash B(x,r)$ by removing a uniformly bounded number of points. 

\begin{theorem}[Carrasco {\cite[Theorem 1.2]{CP14}}] \label{Car}
If $X$ is compact, connected, doubling, contains at least two points, and has the UWS property, then $Q_X = 1$. In particular, if $X$ is linearly connected, then $\Cdim_{\AR}(X) =\Cdim(X) = 1$.
\end{theorem}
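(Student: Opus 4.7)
The goal is to prove $Q_X = 1$, with the lower bound $Q_X \geq 1$ following from $X$ being a non-trivial continuum. For the upper bound $Q_X \leq 1$, fix $p > 1$. For each $m \in \N$, $x \in P_m$, writing $r = a^{-m}$, I would construct an admissible weight $\rho$ on $P_{m+k}$ for the family of vertex paths in $G_{m+k}$ induced by true paths in $X$ from $B(x,r)$ to $X \setminus \cl{B}(x, 2r)$, with $p$-mass bounded by $C' k^{1-p}$ uniformly in $m$ and $x$. This forces the modified modulus to vanish in the limit, giving $Q_X \leq 1$.

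The core construction is iterated UWS at a geometric sequence of scales. Apply UWS at $x$ at scale $r$ to obtain $K_0 \subset \cl{B}(x, 2r)$ of cardinality $\leq C$; any true crossing path $\gamma$ must pass through some $y \in K_0$. Then for each $y \in K_0$ and each $j = 2, 3, \ldots, T := k - O(1)$, apply UWS at $y$ at scale $r_j := a^{-m-j}$ to obtain cut sets $K_y^{(j)} \subset \cl{B}(y, 2 r_j)$ of cardinality $\leq C$. The crucial topological observation is that any $\gamma$ passing through $y \in K_0$, restricted to a neighborhood of $y$, must cross the annulus $\cl{B}(y, 2 r_j) \setminus B(y, r_j)$ for every $j \geq 2$: since the endpoints $p_0 \in B(x, r)$ and $p_1 \notin \cl{B}(x, 2r)$ satisfy $|p_0 - p_1| > r$, we have $\max(|p_0 - y|, |p_1 - y|) > r/2 > 2 r_j$, so $\gamma$ must leave $\cl{B}(y, 2 r_j)$. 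Hence $\gamma$ hits some cut in $K_y^{(j)}$ for each such $j$.

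Define $\rho$ by assigning weight $2/T$ to every vertex of $P_{m+k}$ within graph distance $O(1)$ of a cut in $\bigcup_{y, j} K_y^{(j)}$. By doubling, each cut contributes $O(1)$ vertices, and there are at most $T \cdot C^2$ cuts, so the $p$-mass is bounded by a constant times $T \cdot (2/T)^p = O(T^{1-p}) = O(k^{1-p})$. Admissibility follows because $\gamma$ accumulates weight $\geq (T - O(1)) \cdot (2/T) \geq 1$ from cuts at distinct scales it must visit.

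The main technical obstacle is the topological passage from true paths $\gamma$ in $X$ to their vertex approximations in $G_{m+k}$: one must ensure that the vertex path approximation actually meets a vertex near some cut in $K_y^{(j)}$, and that the ``touching vertices'' at distinct scales contribute distinct summands (so weights do not collapse). Delicacy is required when $y$ lies near the boundary of $\cl{B}(x, 2r)$, where the endpoint estimate is tight and a careful choice of the constant $\lambda$ in the definition of $G_k$ may be needed. The ``In particular'' clause is then immediate from the identity $Q_X = Q_N = \Cdim_{\AR}(X)$ for linearly connected spaces already cited in the text, combined with $\Cdim(X) \geq 1$ for any non-trivial continuum.
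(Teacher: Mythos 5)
First, note that the paper does not prove Theorem \ref{Car} at all: it is quoted from \cite{CP14}, and the closest in-house argument is the proof of Theorem \ref{cdim}, which the paper describes as ``almost identical'' to Carrasco's. Your skeleton (cut set $K_0$ at the top scale, nested cut sets $K_y^{(j)}$ at scales $a^{-m-j}$ around the points $y\in K_0$ that every crossing curve must hit, weight $\approx 1/k$ on vertices near cut points, $p$-mass $\lesssim k^{1-p}$) is exactly that strategy, and the lower bound and the ``in particular'' clause are handled the same way as in the text.

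However, your admissibility step has a genuine gap. You assert that a crossing curve $\gamma$ accumulates weight $\geq (T-O(1))\cdot(2/T)$ ``from cuts at distinct scales,'' but nothing in the UWS property forces the hits at distinct scales to occur at distinct points, let alone at distinct vertices of $P_{m+k}$: the cut set $K_y^{(j)}$ is only constrained to lie in $\cl{B}(y,2a^{-m-j})$, so all of the sets $K_y^{(j)}$ may contain points arbitrarily close to (or equal to) $y$, and a curve could satisfy every constraint ``$\gamma\cap K_y^{(j)}\neq\emptyset$'' by passing through a single point, collecting total weight only $2/T$. Moreover, with $T=k-O(1)$ the scales $j$ close to $k$ are comparable to the vertex scale $a^{-m-k}$, so the $O(1)$-neighborhoods of cuts from consecutive scales overlap and again the weighted vertices coalesce. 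The missing ingredients are precisely the two devices used in the proof of Theorem \ref{cdim}: (a) apply the separation property not to all of $\gamma$ but to the crossing sub-path obtained by following $\gamma$ from its last visit to $B(y,a^{-m-j})$ until it first leaves $\cl{B}(y,2a^{-m-j})$; this sub-path stays at distance $\gtrsim a^{-m-j}$ from $y$, so the cut point it meets lies in the annular shell at scale $a^{-m-j}$, and for $a>10$ these shells are pairwise disjoint (this is the role of the paper's reduction ``$K_{z,i}\cap B(z,5a^{-i-1}r)=\emptyset$''); and (b) restrict to scales $1\leq j\leq k/2$, so that the shells are separated by much more than $a^{-m-k}$ and the corresponding vertex sets $\tilde{K}_{y,j}$ are disjoint. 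With weight $3/k$ on $\bigcup_{y,j\leq k/2}\tilde{K}_{y,j}$ admissibility then holds honestly, and the bound $M_p(k)\lesssim k^{1-p}$ is unchanged. (A smaller point: for the lower bound on $Q_X$ you should also say why the family of vertex paths induced by genuine curves is nonempty at every scale, since $Q_X$ is defined through curves rather than through arbitrary discrete paths; this is where connectedness and at least two points enter.)
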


We should remark that a similar unpublished result was obtained earlier by Keith and Kleiner (cf. the discussion following Theorem 1.2 in \cite{CP14}).

As our interest lies mainly with boundaries of John domains, which are typically not linearly connected, the modified critical exponent $Q_X$ will not be helpful. There is an exception, though, which we discuss before moving to the more general setting.

\subsection{An example with quasi-trees} \label{qtree}

Let us say that a compact metric space $T$ is a tree if it has at least two points and any two points are joined by a unique arc (i.e., a unique simple path). If, in addition, $T$ is linearly connected, then we say that it is a quasi-tree. This terminology is not standard, but we think it is clearest for our purposes. For example, a quasi-arc is the simplest type of quasi-tree: it has no branching. 

If $T \subset \hat{\C}$ is a planar quasi-tree and $\Omega = \hat{\C} \backslash T$, then $\Omega$ is a simply connected domain with $\partial \Omega = T$ (indeed, $T$ has no interior). As $\hat{\C} \backslash \Omega = T$ is linearly connected, we know that $\Omega$ is a John domain. Theorem \ref{John1} implies that $\Cdim_{\AR}(T) = 1$, but there is a more direct way to see this using Carrasco's UWS property. In fact, we have the following proposition for general quasi-trees.

\begin{prop}
If $T$ is a doubling quasi-tree, then $T$ has the UWS property. In particular, $\Cdim_{\AR}(T) = \Cdim(T) = 1$.
\end{prop}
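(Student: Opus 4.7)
The plan is to exploit two features of $T$: the tree structure, in which any interior point of an arc $[y,z]$ separates $y$ from $z$, and the doubling hypothesis, which limits how many ``essentially distinct'' branches leave a ball at scale $r$. Together these will let me place finitely many points in $\cl{B}(x,2r)$ that meet every arc from $B(x,r)$ into $T \setminus \cl{B}(x,2r)$. Once the UWS property is established, the conclusion $\Cdim_{\AR}(T) = \Cdim(T) = 1$ is immediate from Theorem~\ref{Car}, since a quasi-tree is compact, linearly connected, and has at least two points.

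Fix $x \in T$ and $r$ small compared to $\diam(T)$. Let $U_1, \ldots, U_N$ enumerate the connected components of $T \setminus \cl{B}(x, 3r/2)$ that meet $T \setminus \cl{B}(x, 2r)$, and for each $i$ pick $z_i \in U_i$ with $d(x,z_i) = 7r/4$; such a point exists by connectedness of $U_i$ and continuity of $d(x,\cdot)$ along arcs. If $i \neq j$, the unique arc $[z_i,z_j]$ must enter $\cl{B}(x,3r/2)$ (else $z_i,z_j$ lie in a common component), so $\diam([z_i,z_j]) \geq 7r/4 - 3r/2 = r/4$, and linear connectivity yields $|z_i - z_j| \geq r/(4\lambda)$. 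The doubling property then bounds $N$ by a constant depending only on $\lambda$ and the doubling constant $C_d$. Next, the tripod structure in trees gives each $U_i$ a canonical ``entry point'' $e_i$ at distance $3r/2$ from $x$ lying on $[x,u]$ for every $u \in U_i$: for any two points $u,u' \in U_i$, the tripod meet point $m(x,u,u')$ lies on $[u,u'] \subset U_i$ and therefore has $d(x,m) > 3r/2$, so $[x,u]$ and $[x,u']$ share the initial sub-arc $[x,m]$, on which the (common) last exit from $\cl{B}(x,3r/2)$ is the desired $e_i$. Set $K = \{e_1, \ldots, e_N\} \subset \cl{B}(x, 2r)$.

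To verify the UWS condition, fix $y \in B(x,r)$ and $z \in T \setminus \cl{B}(x,2r)$ with $z \in U_i$, and let $m'$ be the tripod meet point of $x,y,z$. Then $m' \in [x,y]$ gives $d(x,m') \leq \diam([x,y]) \leq \lambda r$, and as long as this is less than $3r/2$, the point $e_i \in [x,z]$ at distance $3r/2$ from $x$ must lie on the sub-arc $[m',z] \subset [y,z]$ rather than on $[x,m'] \subset [x,y]$; hence $[y,z] \cap K \neq \emptyset$, as required. The main obstacle is the range $\lambda \geq 3/2$, where $[x,y]$ can extend past distance $3r/2$ from $x$, and $e_i$ can end up on $[x,m'] \subset [x,y]$ instead of on $[y,z]$. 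To handle this I plan to augment $K$ by covering $B(x,r)$ with a bounded (in terms of $\lambda$ and $C_d$) collection of balls of radius $r/(4\lambda)$ and repeating the entry-point construction at the finer scale around each cover ball, which captures arcs whose tripod meet points sit past $S(x,3r/2)$. Doubling keeps the total cut set bounded, yielding the UWS property with constant depending only on $\lambda$ and $C_d$.
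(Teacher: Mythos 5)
Your overall strategy is genuinely different from the paper's: the paper takes a maximal family of pairwise disjoint arcs crossing the annulus $\cl{B}(x,2r)\setminus B(x,r)$, bounds its cardinality, and builds the cut set from separated nets on those arcs together with the points admitting long excursions back to the two bounding spheres, whereas you cut at canonical ``entry points'' of the components of $T\setminus \cl{B}(x,3r/2)$, obtained from tripod meet points. The parts you carry out are correct (modulo the fact, also invoked in the paper, that connected subsets of $T$ are arcwise connected): the separation bound $d(z_i,z_j)\geq r/(4\lambda)$ and the resulting doubling bound on the number of relevant components, the well-definedness of $e_i$ via the meet point $m(x,u,u')\in U_i$, and the verification when $\lambda<3/2$ are all fine. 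But the case you defer, $\lambda\geq 3/2$, is the main case (the linear connectivity constant is in general large), and for it you only offer a plan, so there is a genuine gap.

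Moreover, the plan as phrased does not yet close it: ``repeating the entry-point construction at the finer scale around each cover ball'' suggests shrinking the separating sphere to radius comparable to $r/(4\lambda)$ around each center $c_j$, and then the same mismatch recurs, since an arc from $c_j$ to a point $y$ with $d(c_j,y)<r/(4\lambda)$ can still have diameter up to $r/4$, which overshoots a sphere of radius $\sim r/(4\lambda)$ once $\lambda$ is large. What makes the argument work is decoupling the two scales: keep the separating sphere at radius comparable to $r$ but localize only the inner points. Concretely, cover $B(x,r)$ by boundedly many balls $B(c_j,r/(4\lambda))$, and for each $j$ take the entry points (defined exactly as in your construction) of those components of $T\setminus\cl{B}(c_j,r/2)$ that meet $T\setminus\cl{B}(x,2r)$. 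For $y\in B(c_j,r/(4\lambda))$ and $z\notin\cl{B}(x,2r)$ one has $d(c_j,z)>r>r/2$, and the meet point $m(c_j,y,z)$ satisfies $d(c_j,m)\leq\lambda d(c_j,y)<r/4<r/2$, so the entry point of the component containing $z$ lies on $[y,z]$; it lies within $r/2$ of $c_j$, hence within $3r/2<2r$ of $x$; and the number of components per $j$ is bounded by your same separation-plus-doubling argument (points at distance $3r/4$ from $c_j$ in distinct components are $r/(4\lambda)$-separated). With this modification your proof closes, with constant depending only on $\lambda$ and the doubling constant, and in fact the original construction centered at $x$ becomes superfluous; the appeal to Theorem \ref{Car} at the end is then exactly as in the paper.
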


\begin{proof}
Fix $x \in T$, and let $0< r < \diam(T)/4$. We first remark that every connected subset of $T$ is arcwise connected; indeed, every connected subset of $T$ is itself a doubling quasi-tree. Thus, to verify the UWS property, it suffices to find $K \subset \cl{B}(x,2r)$, with $\#K$ uniformly bounded, for which every arc from $B(x,r)$ to $T \backslash \cl{B}(x,2r)$ passes through a point in $K$.

To this end, we fix $\gamma_1,\ldots,\gamma_m$ a maximal collection of disjoint arcs in the annulus $\cl{B}(x,2r) \backslash B(x,r)$ with initial point on $\partial B(x,r)$ and terminal point on $\partial B(x,2r)$. Our first claim is that $m$ is uniformly bounded, with bound depending only on the doubling constant, $C$, of $T$ and the constant of linear connectivity, $L$. For each $1\leq i \leq m$, let $x_i \in \gamma_i \cap \partial B(x, r)$ and $y_i \in \gamma_i \cap \partial B(x,2r)$ be the initial point and endpoint of $\gamma_i$. Note that, if $i \neq j$, then it is not possible for $d(x_i,x_j) \leq r/3L$ and $d(y_i,y_j) \leq r/3L$ simultaneously. Otherwise, we could find a path from $x_i$ to $x_j$ and a path from $y_i$ to $y_j$, both of diameter at most $r/3$, and therefore disjoint from each other. The union of these paths with $\gamma_i$ and $\gamma_j$ would contain more than one distinct arc joining $x_i$ and $y_i$, contrary to the fact that $T$ is a tree.

Consider the points $x_1,\ldots,x_m$ and for each $i$, let 
$$N_i = \{ j : d(x_i,x_j) \leq r/6L \}.$$
We observe that there is some $i$ for which $\#N_i \geq cm$, where $c >0$ will shortly be determined and will depend only on $C$ and $L$. Indeed, if $\#N_i < cm$ for each $i$, then we could find a sub-collection $x_{i_1},\ldots,x_{i_\ell}$ that is $r/6L$-separated, with $\ell \geq 1/2c$. As all of the points $x_{i_1},\ldots,x_{i_\ell}$ lie in the ball $\cl{B}(x,r)$, by taking $c$ small enough, depending only on $C$ and $L$, we would contradict the doubling property. Thus, we may fix $i$ for which $\#N_i \geq cm$. For each $j,j' \in N_i$, we have $d(x_j,x_{j'}) \leq r/3L$, so that $\{y_j : j \in N_i\}$ is an $r/3L$-separated set in $\cl{B}(x,2r)$. The doubling property then implies that $\#N_i$ is uniformly bounded, depending only on $C$ and $L$. As $\#N_i \geq cm$, this means that $m$ is uniformly bounded as well.

Let us now form the cut set $K$ using specified points on the arcs $\gamma_1,\ldots,\gamma_m$. For each $1\leq i\leq m$, define the following sets. First, let $E_i$ be a maximal $r/10L$-separated set on $\gamma_i$, so that $\#E_i$ is uniformly bounded, depending only on $C$. Then, let $F_i$ be the set of points $z \in \gamma_i$ for which there is an arc $\alpha$, with initial point $z$ and terminal point on $\partial B(x,r)$, with $\diam(\alpha) \geq r/3$ and $\alpha \cap \gamma_i = \{z\}$. Similarly, let $G_i$ be the set of points $z \in \gamma_i$ for which there is an arc $\alpha$, with initial point $z$ and terminal point on $\partial B(x,2r)$, with $\diam(\alpha) \geq r/3$ and $\alpha \cap \gamma_i = \{z\}$. We claim that both $\#F_i$ and $\#G_i$ are uniformly bounded, depending only on $C$ and $L$.

Let us show that $\#F_i$ is uniformly bounded; the argument for $\#G_i$ is analogous. Let $z_1,\ldots,z_k$ be distinct points in $F_i$, so we wish to give a uniform bound on $k$. Let $\alpha_1,\ldots,\alpha_k$ be the corresponding arcs, and for each $j$, let $w_j$ be the terminal point of $\alpha_j$, which lies on $\partial B(x,r)$. For $1\leq j,j' \leq k$ two distinct indices, let $\gamma_i[z_j,z_{j'}]$ denote the sub-arc of $\gamma_i$ that connects $z_j$ and $z_{j'}$. Then the concatenation of $\alpha_j$, $\gamma_i[z_j,z_{j'}]$, and $\alpha_{j'}$ is an arc that connects $w_j$ to $w_{j'}$. As $T$ is a tree, there is no path connecting $w_j$ and $w_{j'}$ of smaller diameter, so the linear connectivity of $T$ gives
$$r/3 \leq \diam(\alpha_j \cup \gamma_i[z_j,z_{j'}] \cup \alpha_{j'}) \leq Ld(w_j,w_{j'}).$$
Thus, the points $w_1,\ldots,w_k$ form an $r/3L$-separated set in the ball $B(x,2r)$. By doubling, we can conclude that $k$ is bounded uniformly, with bound depending only on $C$ and $L$.

Finally, define the cut set to be $K= \bigcup_{i=1}^m E_i \cup F_i \cup G_i$, so that $\#K$ is bounded uniformly in terms of $C$ and $L$. It remains to show that every arc from $B(x,r)$ to $T \backslash \cl{B}(x,2r)$ must pass through a point in $K$.

Let $\gamma$ be such an arc, so there is an index $i$ for which $\gamma \cap \gamma_i \neq \emptyset$; otherwise, the collection $\gamma_1,\ldots,\gamma_m$ would not be maximal. The fact that $T$ is a tree ensures that $\gamma \cap \gamma_i$ is a connected sub-arc of $\gamma_i$, which we will call $\beta$. Let $s$ and $t$ be the endpoints of $\beta$, so there is a sub-arc, $\beta_0$, of $\gamma$ from $\partial B(x,r)$ to $s$ and a sub-arc, $\beta_1$, of $\gamma$ from $t$ to $\partial B(x,2r)$ such that $\beta_0 \cap \beta = \{s\}$, $\beta_1 \cap \beta = \{t\}$, and $\beta_0 \cap \beta_1 = \emptyset$.

Note that at least one of $\beta_0$, $\beta$, and $\beta_1$ must have diameter at least $r/3$. If $\diam(\beta_0) \geq r/3$, then $s \in F_i$; similarly, if $\diam(\beta_1) \geq r/3$, then $t \in G_i$. In either case, $\gamma$ passes through a point in $K$. Thus, we may assume that $\diam(\beta) \geq r/3$. Let $u \in \beta$ be a point for which $d(s,u) \geq r/10$ and $d(u,t) \geq r/10$. As $\beta$ is a sub-arc of $\gamma_i$, we observe that for each $z \in \gamma_i \backslash \beta$, the sub-arc of $\gamma_i$ from $u$ to $z$ has diameter at least $r/10$, and therefore $d(u,z) \geq r/10L$. As $E_i$ was chosen to be a maximal $r/10L$-separated set in $\gamma_i$, we can conclude that $\beta$ contains at least one point from $E_i$. In particular, $\gamma$ passes through a point in $K$.

This verifies the UWS property for $T$. The fact that $\Cdim_{\AR}(T) = \Cdim(T) = 1$ then follows from Theorem \ref{Car}.
\end{proof}

An immediate question that arises is whether or not the conformal dimension is attained by a metric space in $\mathcal{J}(T)$ or in $\mathcal{J}_{\AR}(T)$. The answer is not always affirmative. In \cite{BT}, C. Bishop and J. Tyson constructed a family of ``antenna" sets, each of which is a planar quasi-tree, but the Hausdorff dimension of any quasisymmetric image is strictly larger than 1. Perhaps more appropriate are questions about whether the conformal dimension of an abstract quasi-tree $T$ can be calculated using planar sets. By the above theorem, $\mathcal{J}_{\AR}(T)$ contains metric spaces with Hausdorff dimension arbitrarily close to 1, so it would suffice to prove a bi-Lipschitz embedding theorem for Ahlfors-regular quasi-trees that have small dimension. A similar statement is proven in \cite{HM} for metric quasi-circles: the authors construct a bi-Lipschitz classification of all quasi-circles, and the representatives are planar when the Assouad dimension is less than 2.

\begin{question}
Does every quasi-tree with Assouad dimension less than 2 admit a bi-Lipschitz embedding into $\C$? If so, are there natural planar representatives for the bi-Lipschitz classes?
\end{question}

\section{The discrete UWS property} \label{duws} 

The class of John domains $\Omega$ for which $\partial \Omega$ is linearly connected is somewhat limited, so we must move away from linearly connected considerations, and this includes the modified exponent $Q_X$. Consequently, we return to the original critical exponent $Q_N$ in order to study the conformal dimension. However, the UWS property is not well-suited to estimate $Q_N$. Fortunately, a straightforward discretization of it is.

\begin{definition}
A doubling metric space $(X,d)$ has the discrete UWS property if there is a constant $C \geq 1$ such that for any $x \in X$ and $0< r < C^{-1}\diam(X)$, for every $0<\e<1$, there is a finite set $K \subset B(x,2r)$, with $\#K \leq C$, such that every discrete $\e r$-path from $B(x,r)$ to $X \backslash \cl{B}(x,2r)$ intersects the $\e r$-neighborhood of $K$.
\end{definition}

Let us observe that the discrete UWS property, along with the associated constant $C$, are invariant under scaling the metric space $X$. We will use this fact later.

In working with the discrete UWS property, the following lemma is often helpful. It gives some flexibility in the constants that are used.

\begin{lemma} \label{weakuws}
Suppose that $X$ is doubling and there are constants $C,L \geq 2$ for which the following holds. For any $x \in X$ and $0<r < C^{-1}\diam(X)$, for every $0<\e<C^{-1}$, there is a finite set $K \subset B(x,Lr)$, with $\#K \leq C$, such that every discrete $\e r$-path from $B(x,r)$ to $X \backslash \cl{B}(x,Lr)$ intersects the $C \e r$-neighborhood of $K$. Then $X$ has the discrete UWS property, with constant depending only on $C$, $L$, and the doubling constant of $X$.
\end{lemma}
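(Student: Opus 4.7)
The plan is to localize the weakened hypothesis to small sub-balls covering $\cl{B}(x,2r)\setminus B(x,r)$. Three features distinguish the hypothesis from the conclusion: (i) the cut set $K$ is only forced to lie in $B(x,Lr)$ rather than $B(x,2r)$; (ii) the cut captures paths in the $C\epsilon r$-neighborhood rather than the $\epsilon r$-neighborhood; and (iii) the hypothesis is restricted to $\epsilon<C^{-1}$. I handle (i) by working at scale $r/(4L)$ so that the local cut sets automatically fit inside $B(x,2r)$; (ii) by thickening the combined cut set via a doubling-net argument; and (iii) by treating the residual range $\epsilon\in[1/(4LC),1)$ separately with a coarse net.

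For the main construction, fix $\epsilon\in(0,1/(4LC))$ and $r<(C')^{-1}\diam(X)$ for a sufficiently large constant $C'$ (depending on $C$ and $L$) ensuring $r/(4L)<C^{-1}\diam(X)$. Let $\{y_1,\dots,y_n\}$ be a maximal $r/(4L)$-separated net in the inner annulus $\cl{B}(x,3r/2)\setminus B(x,r)$; by the doubling property, $n$ is bounded in terms of $L$ and the doubling constant. Apply the hypothesis at each $y_i$ with scale $r' := r/(4L)$ and discretization parameter $\epsilon' := 4L\epsilon < C^{-1}$. This yields $K_i\subset B(y_i,Lr')=B(y_i,r/4)\subset B(x,7r/4)\subset B(x,2r)$ with $\#K_i\le C$, such that every discrete $\epsilon r$-path from $B(y_i,r/(4L))$ to $X\setminus\cl{B}(y_i,r/4)$ meets the $C\epsilon r$-neighborhood of $K_i$. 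Given any discrete $\epsilon r$-path $z_0,\dots,z_\ell$ from $B(x,r)$ to $X\setminus\cl{B}(x,2r)$, pick the maximal $j$ with $z_j\in B(x,r)$; then $d(x,z_{j+1})<(1+\epsilon)r\le 3r/2$ places $z_{j+1}$ in the inner annulus, and $d(z_{j+1},y_i)\le r/(4L)$ for some $i$. Since also $d(y_i,z_\ell)\ge d(x,z_\ell)-d(x,y_i)>2r-3r/2=r/2$, the tail path $z_{j+1},\dots,z_\ell$ runs from $B(y_i,r/(4L))$ out of $\cl{B}(y_i,r/4)$, so it meets the $C\epsilon r$-neighborhood of $K_i$.

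To sharpen $C\epsilon r$ to $\epsilon r$, I replace $\bigcup_i K_i$ by a maximal $\epsilon r$-separated subset $\tilde K$ of its closed $C\epsilon r$-neighborhood. Since $C\epsilon r\le r/4$, this neighborhood (hence $\tilde K$) lies inside $B(x,2r)$; by doubling, $\#\tilde K$ depends only on $C$ and the doubling constant, and its $\epsilon r$-neighborhood contains the $C\epsilon r$-neighborhood of $\bigcup_i K_i$. For $\epsilon\in[1/(4LC),1)$, a maximal $\epsilon r$-net in $B(x,2r)$ serves trivially: its cardinality is uniformly bounded (since $\epsilon$ is bounded below in terms of $L,C$) and it contains a point within $\epsilon r$ of the starting point $z_0\in B(x,r)$. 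The main technical obstacle is really scale bookkeeping---forcing each $B(y_i,r/4)$ and the thickened net to remain inside $B(x,2r)$ (which is why one works with the inner annulus $\cl{B}(x,3r/2)\setminus B(x,r)$ rather than the full annulus), and verifying that the tail genuinely exits $\cl{B}(y_i,r/4)$. Combining the two regimes yields the discrete UWS property with constant depending only on $C$, $L$, and the doubling constant.
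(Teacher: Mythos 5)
Your proof is correct and follows essentially the same strategy as the paper's: handle $\e$ bounded below by a trivial $\e r$-net in $B(x,2r)$, and otherwise apply the hypothesis at a bounded net of points at the rescaled radius comparable to $r/L$ (so the cut sets land in $B(x,2r)$), then use doubling to replace the $C\e r$-neighborhood of the combined cut set by the $\e r$-neighborhood of a slightly larger finite set. The only cosmetic difference is that you net the annulus just outside $B(x,r)$ and anchor at the path's first exit point, whereas the paper nets $B(x,r)$ itself and anchors at the path's starting point; both work.
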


\begin{proof}
Fix $x \in X$, $0< r < C^{-1}\diam(X)$, and $0<\e<1$. If $\e \geq 1/2LC$ and $P$ is a maximal $\e r$-separated set in $B(x,2r)$, then $\#P \leq C_1$, where $C_1$ depends only on $C$, $L$, and the doubling constant of $X$. We may then take $K = P$ to verify the discrete UWS property.

Thus, we may assume that $0<\e<1/2LC$. Let $P$ be a maximal $r/LC$-separated set in $B(x,r)$, so that $\#P \leq C_2$, with $C_2$ depending only on $C$, $L$ and the doubling constant of $X$. By our assumption, for each $z \in P$, there is a set $K_z \subset B(z,r/2)$ with $\#K_z \leq C$ such that every discrete $\e r$-path from $B(z,r/2L)$ to $X \backslash \cl{B}(z,r/2)$ intersects the $C\e r$-neighborhood of $K_z$. Notice that we are using the assumption with parameters $r/2L$ and $2L\e < 1/C$. Now, for each $y \in K_z$, let $P_{z,y}$ be a maximal $\e r$-separated set in $B(y, C\e r) \subset B(x, 2r)$, so that $\#P_{z,y} \leq C_3$ with $C_3$ depending only on $C$ and the doubling constant of $X$. Finally, let
$$K = \bigcup_{z \in P} \bigcup_{y \in K_z} P_{z,y}$$
so that $\# K \leq CC_2 C_3$.

If $x_0, \ldots, x_\ell$ is a discrete $\e r$-path from $B(x,r)$ to $X \backslash \cl{B}(x,2r)$, then there is $z \in P$ for which $d(x_0,z) < r/LC$. Thus, $x_0,\ldots,x_\ell$ is a discrete $\e r$-path from $B(z,r/2L)$ to $X \backslash \cl{B}(z,r/2)$. In particular, there is $y \in K_z$ such that $d(x_i,y) < C \e r$ for some $1 \leq i \leq \ell$. Finally, this implies that $x_i$ lies in the $\e r$-neighborhood of $P_{z,y}$ and thus in the $\e r$-neighborhood of $K$.
\end{proof}

Shortly we will see that the discrete UWS property is closely connected to the Ahlfors-regular conformal dimension, so it makes sense to ask whether it is invariant under quasisymmetric maps. We should remark that by Lemma \ref{weakuws}, it is easy to see that the property is invariant under bi-Lipschitz maps between doubling metric spaces. Unfortunately, quasisymmetric invariance is not clear in general, but it is true when the spaces are doubling and linearly connected.

\begin{lemma}
Suppose that $X$ is doubling, linearly connected, and has the discrete UWS property. Then every metric space in $\mathcal{J}(X)$ has the discrete UWS property as well. In particular, every quasi-circle has the discrete UWS property.
\end{lemma}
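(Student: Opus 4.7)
The plan is to pass through Carrasco's (non-discrete) UWS property as an intermediary, which is more naturally preserved by quasisymmetric maps in the linearly connected setting. Since quasisymmetric maps preserve the doubling property and linear connectivity --- with constants depending only on $\eta$ and on those of $X$ --- it suffices to show (a) that discrete UWS and UWS are equivalent on complete, doubling, linearly connected spaces, and (b) that UWS is preserved under quasisymmetric maps between such spaces. The chain
\[
\text{discrete UWS on } X \;\Longrightarrow\; \text{UWS on } X \;\Longrightarrow\; \text{UWS on } Y \;\Longrightarrow\; \text{discrete UWS on } Y
\]
then gives the desired conclusion.

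For (a), the direction UWS $\Longrightarrow$ discrete UWS is straightforward. Given a discrete $\e r$-path avoiding the $(\lambda+1)\e r$-neighborhood of a UWS cut set $K$, linear connectivity allows one to connect each consecutive pair of points by a compact connected set of diameter at most $\lambda \e r$. Their union lies in $X \backslash K$ (each point is at distance at least $\e r$ from $K$) and is a connected subset joining $B(x,r)$ to $X \backslash \cl{B}(x, 2r)$, contradicting UWS; Lemma \ref{weakuws} then absorbs the factor $\lambda + 1$. For the reverse direction, fix $x \in X$ and $r>0$, and apply discrete UWS for a sequence $\e_n \to 0$ to obtain cut sets $K_n \subset \cl{B}(x, 2r)$ with $\#K_n \leq C$. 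By completeness and doubling of $X$, the ball $\cl{B}(x, 2r)$ is compact, so after passing to a subsequence, $K_n$ converges in Hausdorff distance to a finite set $K_\infty$ with $\#K_\infty \leq C$. Linear connectivity implies local path-connectivity (by iterated subdivision), so connected components of the open set $X \backslash K_\infty$ are path-connected; if some component met both $B(x,r)$ and $X \backslash \cl{B}(x, 2r)$, then a path $\gamma$ between them would have positive distance from $K_\infty$, hence from $K_n$ for $n$ large, and subsampling $\gamma$ at scale $\e_n r$ would produce a discrete $\e_n r$-path avoiding the $\e_n r$-neighborhood of $K_n$, contradicting discrete UWS.

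For (b), given $y_0 \in Y$ and $r > 0$, set $x_0 = f^{-1}(y_0)$. Using arcwise connectivity of $X$ and the intermediate value theorem along a connecting arc, pick $x_1 \in X$ with $d(y_0, f(x_1))$ equal to a prescribed multiple of $r$, and set $s = d(x_0, x_1)$. The quasisymmetric inequality yields scales $s_{\text{in}} < s_{\text{out}}$, each proportional to $s$ with the ratio $s_{\text{out}}/s_{\text{in}}$ depending only on $\eta$, such that
\[
f^{-1}(B(y_0, r)) \subset B(x_0, s_{\text{in}}), \qquad f(\cl{B}(x_0, s_{\text{out}})) \subset \cl{B}(y_0, 2r).
\]
Tuning the prescribed multiple, one can arrange $s_{\text{out}} \geq 2 s_{\text{in}}$. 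Applying the UWS property of $X$ at $(x_0, s_{\text{in}})$ produces a finite cut set $K_X \subset \cl{B}(x_0, 2s_{\text{in}}) \subset \cl{B}(x_0, s_{\text{out}})$ with $\#K_X \leq C$ that blocks all connected components of $X \backslash K_X$ from crossing the annulus $\cl{B}(x_0, 2s_{\text{in}}) \backslash B(x_0, s_{\text{in}})$. Setting $K := f(K_X) \subset \cl{B}(y_0, 2r)$, and using that $f$ is a homeomorphism so that connected components of $Y \backslash K$ are $f$-images of those of $X \backslash K_X$, the displayed inclusions force any component of $Y \backslash K$ that meets $B(y_0, r)$ to be disjoint from $Y \backslash \cl{B}(y_0, 2r)$; thus $K$ is a UWS cut set for $Y$ at $(y_0, r)$. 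The main obstacle is reconciling the QS-distorted ratio between inner and outer scales in $X$ with the standard ratio $2$ in the definition of UWS; this is handled by the freedom in choosing the prescribed multiple of $r$ above, together with the monotonicity of UWS cut sets under inclusion of annuli.
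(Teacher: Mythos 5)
The central gap is in your step (b). The claim that, by tuning the prescribed multiple of $r$, you can arrange $s_{\text{out}} \geq 2 s_{\text{in}}$ together with $f^{-1}(B(y_0,r)) \subset B(x_0,s_{\text{in}})$ and $f(\cl{B}(x_0,s_{\text{out}})) \subset \cl{B}(y_0,2r)$ is false: quasisymmetric maps can compress the ratio of radii of concentric balls. Concretely, if $f$ behaves near $x_0$ like a power map $t \mapsto t^{\alpha}$ with $\alpha>1$ (such quasisymmetric self-maps of the circle exist), then $f(B(x_0,s))$ is comparable to $B(y_0,s^{\alpha})$, so the first inclusion forces $s_{\text{in}}^{\alpha} \gtrsim r$ while the second forces $(2s_{\text{in}})^{\alpha} \lesssim 2r$, i.e. $s_{\text{in}}^{\alpha} \lesssim 2^{1-\alpha}r < r$; no choice of the multiple helps, since it rescales inner and outer estimates identically (with $\eta(t)=Kt$ the achievable ratio $s_{\text{out}}/s_{\text{in}}$ is $2/K^2$ independently of the multiple). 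The appeal to ``monotonicity of UWS cut sets under inclusion of annuli'' also goes the wrong way: a cut set for the ratio-$2$ annulus at $(x_0,s_{\text{in}})$ only blocks components that cross all the way past radius $2s_{\text{in}}$, whereas what you need is to block crossings of the possibly much thinner annulus $f^{-1}\lp \cl{B}(y_0,2r)\setminus B(y_0,r)\rp$, and blocking thinner annuli is a strictly stronger requirement. So the middle link of your chain (UWS on $X$ $\Rightarrow$ UWS on $Y$) does not go through as written.

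The gap is repairable, but the repair is the real content: one must show that the UWS property self-improves from ratio-$2$ annuli to annuli of any ratio $1+\delta$, with constant depending on $\delta$ and the doubling constant (for instance, cover the shell at distance $\approx(1+\delta/2)R$ from $x_0$ by boundedly many balls of radius comparable to $\delta R$, apply UWS at each, and use that a connected crossing set attains all intermediate distances). The paper's proof avoids this issue entirely by staying discrete: it transports a discrete path of $Y$ into $X$, uses linear connectivity of $X$ to produce a discrete path at the correct scale there, and crucially proves only a weak form of the discrete UWS property for $Y$, with outer radius $Lr$ instead of $2r$ and enlarged neighborhoods, which Lemma \ref{weakuws} then upgrades; taking the $Y$-annulus that thick guarantees the corresponding annulus in $X$ has ratio at least $2$, so only the genuine (ratio-$2$) property of $X$ is ever invoked. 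Your proposal has no analogue of this flexibility for the non-discrete UWS property. Separately, your reverse direction in (a) (discrete UWS $\Rightarrow$ UWS via Hausdorff limits of the cut sets and path-connectedness of components of $X\setminus K_\infty$) needs completeness/properness of $X$, which is not in the lemma's hypotheses and should be added (harmless in context, since $\mathcal{J}(X)$ is defined for complete spaces); with that caveat it is fine, and the forward direction of (a) coincides with the paper's separate lemma.
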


\begin{proof}
Fix $(Y,d') \in \mathcal{J}(X)$ and let $f \colon Y \rightarrow X$ be an $\eta$-quasisymmetric homeomorphism. It is well-known (and easy to show) that $Y$ is doubling and linearly connected as well. Moreover, there is $L > 1$, depending only on the distortion function $\eta$, such that the following holds. For each $y \in Y$ and $r >0$, there is a radius $R >0$ for which
$$\begin{aligned}
B(f(y),R/L) \subset f(B(y,r/2)) &\subset f(B(y,r)) \\
&\subset B(f(y),R) \subset B(f(y),2R) \subset f(B(y,Lr)).
\end{aligned}$$
One can easily verify this by using the fact that $f^{-1}$ is also quasisymmetric, with distortion function depending only on $\eta$.

Fix $y \in Y$, $0<r<\diam(Y)/2$, and $0<\e<1/10$ small enough that $\delta := 1/L\eta(4L/\e) < 1$. Let $x = f(y)$ and let $R >0$ be the radius given by the previous paragraph. As $X$ has the discrete UWS property, there is a finite set $K' \subset B(x, 2R)$ with $\#K' \leq C$ such that every discrete $\delta R$-path in $X$ from $B(x,R)$ to $X \backslash \cl{B}(x,2R)$ meets the $\delta R$-neighborhood of $K'$. Define $K = f^{-1}(K') \subset B(y, Lr)$ so that $\#K \leq C$ as well.

Now, let $y_0,\ldots,y_\ell$ be a discrete $\e r$-path in $Y$ from $B(y,r)$ to $X \backslash \cl{B}(y,Lr)$. Passing to a sub-path, we may assume that $y_0,\ldots,y_\ell$ is a discrete $2\e r$-path with $d'(y_i,y_{i-1}) \geq \e r/2$ and, moreover, that $y_i \in B(y,2Lr) \backslash B(y,r/2)$ for each $i$. Let $x_i = f(y_i)$ so that $x_0,\ldots,x_{\ell}$ is a discrete path in $X$ from $B(x,R)$ to $X \backslash \cl{B}(x,2R)$ that lies outside of $B(x,R/L)$. Observe then that
$$\frac{d(x_i,x)}{d(x_i,x_{i-1})} \leq \eta \lp \frac{d'(y_i,y)}{d'(y_i,y_{i-1})} \rp \leq \eta \lp \frac{2Lr}{\e r/2} \rp = \eta(4L/\e),$$
and therefore
$$d(x_i,x_{i-1}) \geq \frac{d(x_i,x)}{\eta(4L/\e)} \geq \frac{R}{L\eta(4L/\e)} = \delta R.$$
for each $1 \leq i \leq \ell$.

Let $\lambda \geq 1$ be the constant of linear connectivity for $X$. Then for each $i$, there is a compact, connected set $E_i$ with $x_i,x_{i-1} \in E_i$ and $\diam(E_i) \leq \lambda d(x_i,x_{i-1})$. The set $E = \cup_{i=1}^\ell E_i$ is also compact and connected, and it joins $B(x,R)$ to $X \backslash \cl{B}(x,2R)$. Thus, there is a discrete $\delta R$-path consisting of points in $E$ that joins $B(x,R)$ to $X \backslash \cl{B}(x,2R)$. By the definition of $K'$, this discrete path must meet the $\delta R$-neighborhood of $K'$. In particular, there is $1 \leq i \leq \ell$ for which $E_i$ meets the $\delta R$-neighborhood of $K'$. Using that $\diam(E_i) \leq \lambda d(x_i,x_{i-1})$ and $d(x_i,x_{i-1}) \geq \delta R$, we can conclude that $K'$ intersects the ball $B(x_i, 2\lambda d(x_i,x_{i-1}))$. Let $z'$ be a point of this intersection, and let $z = f^{-1}(z') \in K$. We then have
$$\frac{1}{2\lambda} < \frac{d(x_i,x_{i-1})}{d(x_i,z')} \leq \eta \lp \frac{d'(y_i,y_{i-1})}{d'(y_i,z)} \rp \leq \eta \lp \frac{2\e r}{d'(y_i,z)} \rp,$$
so that $d'(y_i,z) < 2\e r/\eta^{-1}(1/2\lambda)$. Thus, the discrete path $y_0,\ldots,y_\ell$ intersects the $2\e r/\eta^{-1}(1/2\lambda)$-neighborhood of $K$. Using Lemma \ref{weakuws}, we conclude that $Y$ has the discrete UWS property.
\end{proof}

The following lemma shows that for doubling, linearly connected metric spaces, the discrete UWS property is more general than the UWS property.

\begin{lemma}
If $X$ is doubling, linearly connected, and has the UWS property, then $X$ has the discrete UWS property.
\end{lemma}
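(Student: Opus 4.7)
The plan is to pass from the UWS property to the discrete UWS property by promoting a discrete path to a genuine compact connected set using linear connectivity, applying UWS to this set, and then invoking Lemma \ref{weakuws} to absorb the resulting loss of constants.

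In more detail, fix the UWS constant $C_0$ and the linear connectivity constant $\lambda$. Given $x \in X$, $0 < r < C_0^{-1}\diam(X)$, and $0 < \e < 1$, first apply the UWS property to obtain a finite set $K \subset \cl{B}(x,2r) \subset B(x,3r)$ with $\#K \leq C_0$ such that no connected component of $X \setminus K$ can meet both $B(x,r)$ and $X \setminus \cl{B}(x,2r)$. The key observation is that this set $K$ will be the one we want for the discrete UWS property as well, modulo enlarging the neighborhood by a factor of $\lambda$, which Lemma \ref{weakuws} is designed to absorb.

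Now given any discrete $\e r$-path $x_0, x_1, \ldots, x_\ell$ from $B(x,r)$ to $X \setminus \cl{B}(x,3r)$, I would use $\lambda$-linear connectivity to choose, for each $1 \leq i \leq \ell$, a compact connected set $E_i \ni x_{i-1}, x_i$ with $\diam(E_i) \leq \lambda d(x_i,x_{i-1}) \leq \lambda \e r$. Consecutive $E_i$ share the point $x_i$, so $E := \bigcup_{i=1}^\ell E_i$ is compact and connected. Since $E$ contains $x_0 \in B(x,r)$ and $x_\ell \in X \setminus \cl{B}(x,2r)$, it cannot lie in a single connected component of $X \setminus K$, and so $E$ must meet $K$. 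Thus there is some index $i$ with $E_i \cap K \neq \emptyset$, and since $\diam(E_i) \leq \lambda \e r$, the point $x_i$ lies in the $\lambda \e r$-neighborhood of $K$.

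To finish, I would apply Lemma \ref{weakuws} with $L = 3$ and constant $C' := \max(C_0, \lambda, 2)$: the argument above shows that $K \subset B(x,Lr)$, that $\#K \leq C'$, and that every discrete $\e r$-path from $B(x,r)$ to $X \setminus \cl{B}(x,Lr)$ meets the $C'\e r$-neighborhood of $K$, which is exactly the hypothesis of Lemma \ref{weakuws}. The conclusion of that lemma then gives the discrete UWS property for $X$ with a constant depending only on $C_0$, $\lambda$, and the doubling constant of $X$. There is no real obstacle in this proof; the only delicate point is recognizing that the $\e r$-versus-$\lambda\e r$ mismatch is precisely the sort of discrepancy Lemma \ref{weakuws} exists to handle, so that linear connectivity is used only to produce the ``thickening" $E_i$ of each step of the discrete path.
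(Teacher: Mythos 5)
Your proposal is correct and follows essentially the same route as the paper's proof: take the UWS cut set $K$, use $\lambda$-linear connectivity to thicken the discrete path into a compact connected set $E=\bigcup E_i$ that must meet $K$, conclude that some $x_i$ lies within $O(\lambda\e r)$ of $K$, and absorb the loss of constants via Lemma \ref{weakuws}. The only (harmless) cosmetic differences are your choice of $L=3$ in Lemma \ref{weakuws} and the exact constant $C'$, where taking, say, $C'=\max(C_0,2\lambda,2)$ avoids any borderline issue with the $\lambda\e r$ bound.
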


\begin{proof}
Suppose that $X$ has the UWS property with constant $C$. Fix $x \in X$ and $0<r<C^{-1} \diam(X)$. Let $K \subset \cl{B}(x,2r)$ be a finite set for which $\#K \leq C$ and no connected component of $X \backslash K$ intersects both $B(x,r)$ and $X \backslash \cl{B}(x,2r)$. Now let $0<\e<1/2$ and let $x_0,\ldots,x_{\ell}$ be a discrete $\e r$-path in $X$ that joins $B(x,r)$ to $X \backslash \cl{B}(x,2r)$. Without loss of generality, we may assume that each $x_i$ lies in $B(x,3r) \backslash \cl{B}(x,r/2)$. 

Let $\lambda$ be the constant of linear connectivity, so for each $1 \leq i \leq \ell$, there is a compact, connected set $E_i$ with $x_i, x_{i-1} \in E_i$ and $\diam(E_i) \leq \lambda \e r$. Let $E = \cup_{i=1}^\ell E_i$ so that $E$ is compact, connected, and intersects both $B(x,r)$ and $X \backslash \cl{B}(x,2r)$. In particular, $E$ must intersect $K$, so there is $1\leq i\leq \ell$ with $E_i \cap K \neq \emptyset$. As $E_i \subset B(x_i, 2\lambda \e r)$, we see that $x_i$ lies in the $2\lambda \e r$-neighborhood of $K$. Appealing to Lemma \ref{weakuws}, we conclude that $X$ has the discrete UWS property.
\end{proof}

We now prove Theorem \ref{cdim}, one of the two results needed to obtain the main theorem. In light of the previous lemma, we can think of it as an extension of \cite[Theorem 1.2]{CP14}. The proofs are almost identical, though.

\begin{thmcdim}
If $X$ is compact, doubling, uniformly perfect, and has the discrete UWS property, then $\Cdim_{\AR}(X) \in \{0,1\}$. It is equal to $0$ if and only if $X$ is uniformly disconnected.
\end{thmcdim}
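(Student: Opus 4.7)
The plan is to follow Carrasco's strategy for Theorem \ref{Car} and apply the Kleiner--Keith--Carrasco characterization. Since $X$ is compact, doubling, and uniformly perfect, $\Cdim_{\AR}(X) = Q_N$, so to show $\Cdim_{\AR}(X) \leq 1$ it suffices to prove that $M_p(k) \to 0$ as $k \to \infty$ for every $p > 1$, uniformly in $m$ and in the center $x \in P_m$.

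The core of the argument is, for fixed $p > 1$, to construct an admissible weight function for $\Gamma_k(x)$ with controllably small $p$-mass via an iterated application of the discrete UWS property. At the top level, applying the discrete UWS property to the annulus $\cl{B}(x, 2a^{-m}) \backslash B(x, a^{-m})$ with parameter $\e$ of order $a^{-1}$ produces a cut set $K_1 \subset B(x, 2a^{-m})$ with $\#K_1 \leq C$: every $\gamma \in \Gamma_k(x)$, viewed as a discrete $2\lambda a^{-(m+k)}$-path crossing this annulus, enters the $O(a^{-(m+1)})$-neighborhood of some point of $K_1$. Applying discrete UWS in a small ball around each $y \in K_1$ at scale $a^{-(m+1)}$ yields a level-$2$ cut set $K_2$ with $\#K_2 \leq C^2$; iterating $k$ times gives cut sets $K_j$ at scale $a^{-(m+j)}$ with $\#K_j \leq C^j$, and the inductive invariant maintained at each level is that any $\gamma \in \Gamma_k(x)$ must enter a fixed-size neighborhood of at least one point of $K_j$ for each $j = 1, \ldots, k$. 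Lemma \ref{weakuws} would be used here to absorb the constants introduced at each step.

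Having built this tree, I would define $\rho$ on $P_{m+k}$ by placing weight $w_j$ on each vertex lying in a doubling-bounded neighborhood of a level-$j$ cut point. Admissibility $\sum_{v \in \gamma} \rho(v) \geq 1$ then reduces to $\sum_j w_j \geq 1$, and the $p$-mass satisfies $\sum_v \rho(v)^p \lesssim \sum_{j=1}^k C^j w_j^p$ by doubling. The Carrasco optimization with $w_j \propto C^{-j/(p-1)}$, combined with a self-improvement across scales — recursively bounding the modulus of the full annulus by a fractional power of the moduli on the sub-annuli produced after one cut — upgrades the resulting finite bound to decay $M_p(k) \to 0$, as in the proof of \cite[Theorem 1.2]{CP14}. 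Combining $\Cdim_{\AR}(X) \leq 1$ with the standard dichotomy $\Cdim_{\AR}(X) \in \{0\} \cup [1, \infty)$ then yields $\Cdim_{\AR}(X) \in \{0,1\}$. For the characterization, uniform disconnectedness makes the families $\Gamma_k(x)$ eventually empty at sufficiently small scales (so $M_p(k) = 0$ for all $p > 0$ and $\Cdim_{\AR}(X) = 0$), whereas any failure of uniform disconnectedness produces nontrivial discrete chains at arbitrarily small scales which, with uniform perfectness, force $M_1(k)$ away from $0$ and hence $\Cdim_{\AR}(X) \geq 1$.

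The main obstacle I expect is the decay estimate itself. The naive tree-weight bound $M_p(k) \lesssim \sum_{j=1}^k C^j w_j^p$, when optimized in the $w_j$, produces only a finite bound uniform in $k$; upgrading it to true decay requires implementing Carrasco's recursive self-improvement in the discrete setting, where vertex paths are not constrained to stay within a single connected component of a sub-annulus and may jump between them. Verifying that the hierarchical cut construction is still compatible with such jumps — and that the sub-annular moduli entering the recursion can be estimated uniformly by the same bound applied one level down — is the technical heart of the argument.
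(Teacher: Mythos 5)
Your outline stalls exactly where you say it does, and the missing idea is not a ``recursive self-improvement'' of modulus across sub-annuli but a much simpler device that the paper uses and that makes the whole exponential-tree difficulty disappear. The branching hierarchy you propose is the wrong shape: with $\#K_j\leq C^j$ cut points at level $j$, the optimization of $\sum_j C^j w_j^p$ subject to $\sum_j w_j\geq 1$ gives a bound that is uniform in $k$ but never decays, as you note, and no amount of care about paths jumping between components will fix that, because the loss is already built into the cardinality count. The paper instead applies the discrete UWS property only twice in structure: once at the top scale, producing a single cut set $K\subset B(x,2r)$ (discretized to $\tilde{K}$ with $\#\tilde{K}\leq CC'$), and then, for each \emph{fixed} $z\in\tilde{K}$, to the $k/2$ concentric annuli $\cl{B}(z,2a^{-i}r)\backslash B(z,a^{-i}r)$, $1\leq i\leq k/2$, all with the \emph{same} tiny parameter $\e r=2\lambda a^{-k}r$ (this is where the definition's freedom to take $\e$ arbitrarily small, independent of the annulus scale, is exploited --- your level-by-level scheme only ever uses $\e$ comparable to $a^{-1}$). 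Any vertex path through $z$ in $\Gamma_k(x)$ must leave $\cl{B}(z,r/2)$, hence crosses every one of these annuli, hence meets each of the pairwise disjoint discretized cut sets $\tilde{K}_{z,i}$, each of cardinality at most $CC'$. So the total number of weighted vertices per $z$ is only $O(k)$, not $C^k$, and the constant weight $3/k$ on $\bigcup_i\tilde{K}_{z,i}$ is admissible for $\Gamma_z$ with $p$-mass at most $3^pCC'\,k^{1-p}$. Summing over $z\in\tilde{K}$ gives $\mod_p(\Gamma_k(x),G_{m+k})\lesssim k^{1-p}\rightarrow 0$ for every $p>1$, uniformly in $m$ and $x$, with no recursion at all.

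Two smaller points. First, to run the telescope you need that every path of $\Gamma_k(x)$ through $z$ really does exit $\cl{B}(z,r/2)$; this follows because $z$ lies within distance about $2r$ of $x$ while the path joins $B(x,r)$ to $X\backslash\cl{B}(x,2r)$, so it cannot stay within $r/2$ of $z$ --- worth stating, since it is what licenses the definition of $\Gamma_z$. Second, your closing dichotomy is essentially right, but the lower bound in the non-uniformly-disconnected case is cleanest at exponents $0<p<1$: a nonempty $\Gamma_k(x)$ forces $\mod_p(\Gamma_k(x),G_{m+k})\geq 1$ because optimal weights take values in $[0,1]$, giving $M_p\geq 1$ for all $p<1$ and hence $Q_N\geq 1$; uniform perfectness is not needed for this step beyond its role in the Keith--Kleiner--Carrasco identification $Q_N=\Cdim_{\AR}(X)$.
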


\begin{proof}
We may assume that $X$ has at least two points. First we establish that $\Cdim_{\AR}(X) \leq 1$. Let $C$ be the constant from the discrete UWS property. Let $G_k$ denote the graphs constructed from $X$ using parameters $a > \max\{10, C/\diam(X) \}$ and $\lambda \geq 32$, so that the vertex sets $P_k$ are maximal $a^{-k}$-separated sets in $X$, and two vertices $x,y \in P_k$ are joined by an edge if
$$B(x,\lambda a^{-k}) \cap B(y,\lambda a^{-k}) \neq \emptyset.$$
Fix $m \in \N$ and let $x \in P_m$. Finally, fix $p >1$. Our goal is to estimate the quantity $\mod_p(\Gamma_k(x), G_{m+k})$ for large values of $k$.

For simplicity, let $r = a^{-m} < C^{-1}\diam(X)$ and let $\e = 2\lambda a^{-k} <1/10$, where $k$ is very large. Let $K \subset B(x, 2r)$ be a finite set coming from the discrete UWS property, so that $\#K \leq C$ and every discrete $\e r$-path from $B(x,r)$ to $X \backslash \cl{B}(x,2r)$ intersects the $\e r$-neighborhood of $K$. Then, let
$$\tilde{K} = \{ z \in P_{m+k} : B(z,\e r) \cap K \neq \emptyset \}.$$
Notice that $\# \tilde{K} \leq \#K \cdot C' \leq CC'$, where $C'$ depends only on the doubling constant of $X$, on $\lambda$, and on $a$. Moreover, every vertex path in $G_{m+k}$ from $P_{m+k} \cap B(x,r)$ to $P_{m+k} \cap (X \backslash \cl{B}(x,2r))$ must include some vertex in $\tilde{K}$. Indeed, the vertex path in $G_{m+k}$ forms a discrete $\e r$-path in $X$ from $B(x,r)$ to $X \backslash \cl{B}(x,2r)$ and therefore must intersect the $\e r$-neighborhood of $K$.

For each $z \in \tilde{K}$, let $\Gamma_z$ be the collection of vertex paths in $G_{m+k}$ that include $z$ and include a vertex that lies in $X \backslash \cl{B}(z,r/2)$. It is then clear that $\Gamma_k(x) \subset \bigcup_{z \in \tilde{K}} \Gamma_z$, so by sub-additivity of modulus,
$$\mod_p(\Gamma_k(x), G_{m+k}) \leq \sum_{z \in \tilde{K}} \mod_p(\Gamma_z, G_{m+k}).$$
Thus, we need to bound $\mod_p(\Gamma_z, G_{m+k})$ for each $z \in \tilde{K}$. 

To this end, fix $z \in \tilde{K}$, and for each $1 \leq i \leq k/2$, consider the balls $B(z,a^{-i}r)$ and $B(z,2a^{-i}r)$. By the discrete UWS property, there is a finite set $K_{z,i} \subset B(z,2a^{-i}r)$ with $\#K_{z,i} \leq C$ such that every discrete $\e r$-path from $B(z,a^{-i}r)$ to $X \backslash \cl{B}(z,2a^{-i}r)$ intersects the $\e r$-neighborhood of $K_{z,i}$. Here, we are using that $\e r = 2\lambda a^{-k} r$ is much smaller than the radius $a^{-i}r$ because $i \leq k/2$. Also, note that we may assume $K_{z,i} \cap B(z, 5a^{-i-1}r) = \emptyset$, as $a>10$. Now, let 
$$\tilde{K}_{z,i} = \{ v \in P_{m+k} : B(v,\e r) \cap K_{z,i} \neq \emptyset \},$$
be the corresponding set of vertices in $P_{m+k}$, and note that $\# \tilde{K}_{z,i} \leq \# K_{z,i} \cdot C' \leq CC'$. These sets are disjoint for $1 \leq i \leq k/2$, again because $a >10$ and $\e r$ is much smaller than $a^{-i}r$. Moreover, each vertex path in $\Gamma_z$ intersects both $B(z,a^{-i}r)$ and $X \backslash \cl{B}(z,2a^{-i}r)$, so a sub-path of it forms a discrete $\e r$-path from $B(z,a^{-i}r)$ to $X \backslash \cl{B}(z,2a^{-i}r)$. In particular, it must include a vertex in $\tilde{K}_{z,i}$ for each $i$.

Define a weight function $\rho \colon P_{m+k} \rightarrow \R$ by 
$$\rho(v) =
\begin{cases}
3/k & \text{ if } v \in \bigcup_{i=1}^{k/2} \tilde{K}_{z,i} \\
0 & \text{ otherwise.}
\end{cases}$$
Every vertex path $\gamma \in \Gamma_z$ contains a vertex in $\tilde{K}_{z,i}$ for each $1\leq i \leq k/2$, so we have $\sum_{v \in \gamma} \rho(v) \geq 1$. Thus, $\rho$ is admissible for $\Gamma_z$, and we can estimate
$$\mod_p(\Gamma_z, G_{m+k}) \leq \sum_{v \in P_{m+k}} \rho(v)^p  = \# \lp \bigcup_{i=1}^{k/2} \tilde{K}_{z,i} \rp 3^p k^{-p}  \leq 3^pC C'  \cdot k^{1-p}.$$
This gives the bound
$$\mod_p(\Gamma_k(x), G_{m+k}) \leq \# \tilde{K} \cdot 3^p C C' \cdot k^{1-p} \leq 3^p(CC')^2 \cdot k^{1-p}.$$
As $m \in \N$ and $x \in P_m$ were arbitrary, we obtain $M_p(k) \leq 3^p(CC')^2 \cdot k^{1-p}$, which tends to zero as $k \rightarrow \infty$. Moreover, $p>1$ was arbitrary, so we can conclude that $\Cdim_{\AR}(X) = Q_N \leq 1$.

We now observe that $Q_N$ cannot assume values in $(0,1)$. Indeed, if $X$ is uniformly disconnected then for each $x \in P_m$, the collection $\Gamma_k(x)$ is empty for sufficiently large $k$ and so $\liminf_{k \rightarrow \infty} \mod_p(\Gamma_k(x),G_{m+k}) = 0$. Thus, $M_p =0$ for each $p$, and we have $Q_N=0$. On the other hand, if $X$ is not uniformly disconnected, then for each $k \in \N$, there is some $m \in \N$ and $x \in P_m$ for which $\Gamma_k(x)$ is non-empty. When $0<p<1$, this ensures that $\mod_p(\Gamma_k(x),G_{m+k}) \geq 1$, so we see that $M_p(k) \geq 1$. In particular, $M_p \geq 1$ for all $0<p<1$, so we must have $Q_N \geq 1$.
\end{proof}

In order to prove Theorem \ref{John1}, it remains to verify Theorem \ref{Johnuws}, which says that boundaries of John domains have the discrete UWS property. Thus, we now turn our attention back to the planar setting.

\section{Conformal dimension of boundaries of John domains} \label{Johnsec}

As we are working with essentially discrete concepts, it is not surprising that we will need some input from combinatorics. Our proof of Theorem \ref{Johnuws}, which is really the heart of this paper, will hinge on the following fact. It provides a dual relationship between separation and connection in finite graphs. Here, we say that a set of vertices $S$ in a graph separates two other sets of vertices, $A$ and $A'$, if every vertex path from $A$ to $A'$ intersects $S$ nontrivially.

\begin{menger}[{\cite[Theorem 3.3.1]{Die}}]
Let $G = (V,E)$ be a finite graph, and let $A,A' \subset V$ be non-empty and disjoint. Then the minimal size of a vertex set that separates $A$ and $A'$ in $G$ is equal to the maximal number of pairwise disjoint vertex paths that connect $A$ and $A'$.
\end{menger}

Let us recall Theorem \ref{Johnuws} before giving its proof, which relies heavily on Menger's theorem. We also remind the reader that all distances are taken in the spherical metric, though we will use the notation $|x-y|$.

\begin{thmJohnuws}
If $\Omega \subset \hat{\C}$ is an $L$-John domain, then $\partial \Omega$ has the discrete UWS property, with constant depending only on $L$.
\end{thmJohnuws}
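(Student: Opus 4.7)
The plan is to argue by contradiction using Menger's theorem: if $\partial\Omega$ fails the discrete UWS property with a large constant $N$, I would extract $N+1$ pairwise disjoint discrete $\e r$-paths across the annulus, shadow them by near-boundary arcs in $\Omega$, and then use the $L$-John condition together with planar topology to bound $N$ in terms of $L$ alone. Fix $x \in \partial \Omega$, $0 < r < c(L)\,\diam(\partial \Omega)$, and $0 < \e < 1$. I would set up a finite graph $G$ on a maximal $\e r$-separated subset $P \subset \partial \Omega \cap \cl{B}(x, 3r)$, with $u, v$ adjacent when $|u-v| \leq 3\e r$; let $A = P \cap B(x, r+\e r)$ and $A' = P \setminus \cl{B}(x, 2r-\e r)$. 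Any discrete $\e r$-path crossing the annulus snaps to a vertex path in $G$ from $A$ to $A'$, while conversely the $\e r$-neighborhood of any vertex separator of $A, A'$ catches all such discrete paths; so by Lemma \ref{weakuws} it is enough to find a separator in $G$ of size bounded by some $N = N(L)$.

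Assume for contradiction that the minimum such separator has size at least $N+1$. Menger's theorem then produces pairwise vertex-disjoint paths $\pi_1, \ldots, \pi_{N+1}$ in $G$ joining $A$ to $A'$. For each vertex $v$ on $\pi_i$ I would pick a point $v^\ast \in \Omega$ with $|v^\ast - v|$ of order $\e r$, possible because $v \in \partial \Omega$ and $\Omega$ is open, and then splice consecutive $v^\ast, u^\ast$ by short arcs in $\Omega$ using local arcwise connectivity of $\Omega$ near the boundary (a consequence of the John condition applied at small scales). The result is a family of Jordan arcs $\tilde \pi_i \subset \Omega$, each crossing the annulus $\cl{B}(x, 2r) \setminus B(x, r)$, running at spherical distance $O(\e r)$ from $\partial \Omega$, and, after small perturbations inside $\Omega$, pairwise disjoint in $\hat{\C}$.

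The hard part, and the main obstacle, is converting the existence of many pairwise disjoint, near-boundary arcs $\tilde\pi_i$ into a contradiction with the John condition via a planar-topological argument. Pick a midpoint $z_i \in \tilde\pi_i$ with $|z_i - x| \approx 3r/2$, so that $\delta_\Omega(z_i) = O(\e r)$. The $L$-John arc $\alpha_i$ from the base-point $z_0$ to $z_i$ must approach $z_i$ through a twisted cone of width comparable to $\delta_\Omega(z)/L$ at each $z \in \alpha_i$. Because the $\tilde\pi_i$ are pairwise disjoint in $\hat{\C}$ and cross the annulus, together with the boundary pieces of $\partial \Omega$ they shadow they cut $\Omega \cap B(x, 2r)$ into sectors that are cyclically ordered around $x$. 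Each $\alpha_i$ must eventually enter the sector adjacent to $\tilde\pi_i$, and inside that sector its twisted cone is pinched between two neighboring $\tilde\pi_j$. A combinatorial accounting based on planarity and the definite aperture of each twisted cone (which depends only on $L$) caps the number of coexisting sectors, and hence the number of admissible $\tilde\pi_i$, by a constant $N = N(L)$. Taking $K$ to be a minimum vertex separator in $G$ and applying Lemma \ref{weakuws} then yields the discrete UWS property with constant depending only on $L$.
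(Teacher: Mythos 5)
Your overall skeleton (net $+$ graph, Menger's theorem, bounding the number of pairwise disjoint crossing paths, then invoking the John condition and planarity, and finishing with Lemma \ref{weakuws}) is exactly the paper's strategy, but there is a genuine gap at the step where you push the discrete paths into $\Omega$. You claim that for consecutive vertices $u,v$ on $\pi_i$ one can choose $u^\ast,v^\ast\in\Omega$ within $O(\e r)$ of them and splice them by short arcs in $\Omega$, citing ``local arcwise connectivity of $\Omega$ near the boundary (a consequence of the John condition).'' The John condition gives no such local quasiconvexity: a John domain may have inward-pointing spikes, and two points of $\Omega$ on opposite sides of a spike are close in $\hat{\C}$ but can only be joined inside $\Omega$ by an arc of diameter comparable to the spike's length (already the slit disk $\D\setminus[0,1)$ shows this). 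So the arcs $\tilde\pi_i\subset\Omega$ staying within $O(\e r)$ of the discrete paths need not exist. The paper sidesteps this precisely by \emph{not} building the crossing arcs inside $\Omega$: it takes points $y_i(j)$ in general position within $\e r/2$ of the vertices, joins them by spherical segments in $\hat{\C}$ (these may leave $\Omega$), and makes the resulting curves pairwise disjoint by a local surgery at each transverse crossing. Disjointness is the only property needed of these arcs; membership in $\Omega$ is not.

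The second half of your argument is also missing its key mechanism, and the sector picture you describe is not correct as stated: whether the crossing arcs lie in $\Omega$ (your version) or merely near $\partial\Omega$ (the paper's), the John arc to a tip $z_i$ is free to cross the other arcs, so it is not ``pinched between two neighboring $\tilde\pi_j$.'' The paper's quantitative substitute is: if the John arc $\gamma_i$ comes within $5\e r$ of some $\beta_j$, then at that point $\delta_\Omega\leq 10\e r$, so by the $L$-John condition the point lies within $10L\e r$ of the tip $z_i$; packing the disjoint balls $B(y_j,\e r/2)$ into $B(z_i,26L\e r)$ bounds the number of such $j$ by $C(L)$. Passing to a subfamily of proportion $1/2C$ for which each John arc avoids the $5\e r$-neighborhoods of the others, the cyclic order of the arcs confines each selected John arc, and hence the disk $D_i=B(w_i,r/12L)\subset\Omega$ it carries (with $w_i$ on $\partial B(x,4r/3)$ or $\partial B(x,5r/3)$, the radius coming from the John estimate $\delta_\Omega(w_i)\geq r/12L$), to the component between its two neighbors; thus a third of the disks are disjoint and an area bound in $B(x,2r)$ gives $m\leq N(L)$. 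Your ``combinatorial accounting based on planarity and the definite aperture of each twisted cone'' gestures at this but supplies neither the tip-proximity/packing bound nor the concrete area argument, so as written the bound $N=N(L)$ is not established.
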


\begin{proof}
Let $z_0$ be the base-point of $\Omega$ so that $\Omega$ is an $L$-John domain with respect to $z_0$. As the $L$-John condition and the discrete UWS condition (along with its constant) are both invariant under scaling, we may assume that $\delta_{\Omega}(z_0) = 1$.

Fix $x \in \partial \Omega$, $0<r< 1/10$, and $0<\e<1/100L$. We must produce a finite set $K$ which verifies the discrete UWS property for these parameters. To this end, let $P$ be a maximal $2 \e r$-separated set in $B(x,3r) \cap \partial \Omega$, and let $G$ be the graph with vertex set $P$, where $u,v \in P$ are joined by an edge if 
$$B(u,5\e r) \cap B(v,5\e r) \neq \emptyset.$$ 
Throughout this proof, balls will refer to spherical balls in $\hat{\C}$, not just to their intersections with $\partial \Omega$. Thus, the rule used to join $u,v \in P$ by an edge in $G$ requires only that the balls $B(u,5\e r)$ and $B(v,5\e r)$ intersect in $\hat{\C}$. Observe also that $\{B(v, \e r) : v \in P\}$ is a disjoint collection of balls.

Define the sets 
$$A = \{v \in P : B(v,2\e r) \cap B(x,r) \neq \emptyset \}$$
$$A' = \{v \in P : B(v,2\e r) \cap (\hat{\C} \backslash \cl{B}(x,2r)) \neq \emptyset \},$$
so that $A$ and $A'$ are disjoint vertex sets in $G$. Our eventual goal is to show that there is a uniformly bounded number of disjoint vertex paths in $G$ that connect $A$ to $A'$, where the bound depends only on $L$.

First, let us see why this gives the desired conclusion. By Menger's theorem, applied to the graph $G$ with vertex sets $A$ and $A'$, we can find a set $K \subset P$, with $\#K$ uniformly bounded depending only on $L$, that separates $A$ from $A'$ in $G$. Now, suppose that $x_0,\ldots,x_\ell \in \partial \Omega$ is a discrete $\e r$-path from $B(x,r)$ to $\hat{\C} \backslash \cl{B}(x,2r)$, so that $x_0 \in B(x,r)$ and $x_\ell \in \hat{\C} \backslash \cl{B}(x,2r)$. Without loss of generality, we may assume that $x_i \in B(x,3r)$ as well. For each $x_i$ there is $v_i \in P$ with $|x_i - v_i| < 2\e r$. This means that $|v_i - v_{i-1}| < 5\e r$, so $v_0,\ldots, v_\ell$ is a vertex path in $G$ with $v_0 \in A$ and $v_\ell \in A'$. In particular, there is some $v_i \in K$, which gives $\dist(x_i,K) < 2\e r$. As $\partial \Omega$ is doubling, we may appeal to Lemma \ref{weakuws} to conclude that $\partial \Omega$ has the discrete UWS property.

It therefore suffices to bound the number of disjoint vertex paths in $G$ that connect $A$ to $A'$. To this end, let us fix $\eta_1,\ldots,\eta_m$ to be such vertex paths, which we express as
$$\eta_i = x_i(0),x_i(1),\ldots,x_i(\ell_i),$$
where $x_i(j) \in P$ for each $i$ and $j$. Without loss of generality, we may assume that $\eta_i$ is minimal, in the sense that no proper subset of its vertices forms a path in $G$ that connects $A$ to $A'$. In particular, this means that the collection of vertices $x_i(j)$ are all distinct, so the balls $B(x_i(j), \e r)$ are all disjoint. Now, for each $x_i(j)$, we choose a point $y_i(j) \in B(x_i(j), \e r/2)$ so that the following two conditions hold:
\begin{enumerate}
\item[(i)] the spherical segment $[y_i(j-1),y_i(j)]$ lies in $B(x_i(j-1),5\e r) \cup B(x_i(j),5\e r)$ for each $1 \leq j \leq \ell_i$, and
\item[(ii)] the collection $\{y_i(j)\}_{i,j}$ is in general position: no three points lie on a common line, and no three lines determined by this collection intersect at a common point.
\end{enumerate}
Note that the balls $B(y_i(j), \e r/2)$ are all disjoint as well.

Now, fix $1 \leq i \leq m$, and let $\alpha_i$ be the piecewise-linear path consisting of the segments $[y_i(j-1),y_i(j)]$ for $1 \leq j \leq \ell_i$, parameterized according to this order. A priori, these paths might not be simple, and they very well might intersect each other. However, the fact that $\{y_i(j)\}_{i,j}$ are in general position implies that there are only finitely-many intersections, they are all transverse, and none of the $y_i(j)$ are intersection points. In fact, every intersection is simply the intersection of two open spherical segments $(y_i(j-1),y_i(j))$ and $(y_{i'}(j'-1),y_{i'}(j'))$.

Let us modify the paths $\alpha_i$ to obtain disjoint arcs as follows. At each intersection point $q$ consider a very small ball $B(q,\delta)$, where $\delta >0$ is chosen so that
\begin{enumerate}
\item[(i)] $\delta$ is much less than the distance between $q$ and any of the points $y_i(j)$,
\item[(ii)] $\delta$ is much less than the distance between $q$ and any other intersection point, and
\item[(iii)] $B(q,\delta)$ still lies in one of the balls $B(x_i(j), 5\e r)$, eg. the one that $q$ lies in.
\end{enumerate}
Let $\alpha_s$ and $\alpha_t$ be the paths that intersect at $q$ (here $s=t$ is allowed), so their intersection with $B(q,\delta)$ forms a star at $q$ with four endpoints on $\partial B(q,\delta)$. According to the parameterizations of $\alpha_s$ and $\alpha_t$, two of the four prongs of this star are directed inward toward $q$ and the other two are directed outward from $q$. We may therefore reconnect the four segments inside $B(q,\delta/2)$ to produce two arcs that are disjoint in $B(q,\delta)$ and which enter and exit the ball in the same directions and at the same points as did the original four prongs (see Figure \ref{prongs}).

\begin{figure}[h] 
  \centering
    \includegraphics[width=0.75\textwidth]{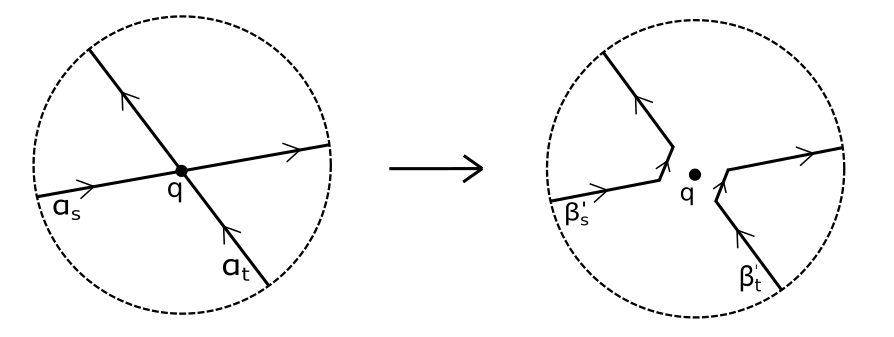}
    \caption{Reconnecting the four prongs to obtain disjoint arcs in $B(q,\delta)$}
\label{prongs}
\end{figure}

We perform this process at all intersection points of the paths $\alpha_1,\ldots,\alpha_m$, thereby producing a collection of disjoint arcs $\beta'_1,\ldots,\beta'_m$ (and possibly some disjoint loops, which we discard). This process does not alter $\alpha_i$ near its initial point or its endpoint, so the family $\beta'_1,\ldots,\beta'_m$ has the same collection of initial points and endpoints as did $\alpha_1,\ldots,\alpha_m$. In particular, each $\beta'_i$ is an arc that connects $B(x,7r/6)$ to $\hat{\C} \backslash \cl{B}(x,11r/6)$, and $\beta'_i$ is contained in the $5 \e r$-neighborhood of $P \subset \partial \Omega$. Now let $\beta_i$ be a minimal sub-arc of $\beta'_i$ that has its initial point on $ \partial B(x,7r/6)$ and its endpoint on $\partial B(x,11r/6)$.

Recall that our goal is to show that $m$ is uniformly bounded. This is where we will use the John condition for $\Omega$. First, it will be helpful to re-order the arcs $\beta_1,\ldots,\beta_m$ to coincide with, say, the counter-clockwise orientation on their initial points, which all lie on the circle $ \partial B(x,7r/6)$. This means that for each $i$, the arc $\beta_i$ is contained in a single connected component of the twice-slit annulus
$$\lp \cl{B}(x,11r/6) \backslash B(x,7r/6) \rp \backslash (\beta_{i-1} \cup \beta_{i+1})$$
and this component contains no other $\beta_j$. Here, $i-1$ and $i+1$ should be interpreted modulo $m$.

We now find, for each $i$, a John arc that ends near $\beta_i$. More precisely, for each arc $\beta_i$, let $u_i$ be a point on $\beta_i \cap \partial B(x,3r/2)$. As $\beta_i$ is a sub-arc of $\beta_i'$, there must be a point $v_i \in P \subset \partial \Omega$ with $|u_i -v_i| < 5\e r$. There are points in $\Omega$ arbitrarily close to $v_i$, so we can find $z_i \in \Omega$ with $|u_i - z_i| < 5\e r$. By the John assumption, there is an $L$-John arc in $\Omega$ from $z_0$ to $z_i$. Notice that $2r < 1 = \delta_{\Omega}(z_0)$, so we know that $z_0$ lies outside of $\cl{B}(x,2r)$. Thus, there is a sub-arc of this John arc that lies entirely in the annulus $\cl{B}(x,11r/6) \backslash B(x,7r/6)$ and still has tip $z_i$. Call this sub-arc $\gamma_i$. It is possible that $\gamma_i$ may enter the annulus from either complementary component. It must, however, pass through at least one of $\partial B(x,4r/3)$ and $\partial B(x,5r/3)$. 

Let $w_i$ be a point at which $\gamma_i$ intersects $\partial B(x,4r/3)$ or $\partial B(x,5r/3)$, and let $D_i = B(w_i, r/12L)$. Note that 
$$\diam(\gamma_i[w_i,z_i]) \geq |w_i-z_i| \geq |u_i - w_i| - |u_i - z_i| \geq r/6 - 5\e r > r/12$$
so that $D_i \subset \Omega$. A similar argument shows that $D_i$ does not intersect any of the arcs $\beta_1,\ldots,\beta_m$, which are all in the $5\e r$-neighborhood of $\partial \Omega$. Now, If the disks $D_i$ were disjoint, then we would be finished by volume considerations, as they all lie in the ball $B(x,2r)$. This may fail, though, because the John arcs $\gamma_i$ might cross some of the arcs $\beta_1,\ldots,\beta_m$. We claim, however, that a definite proportion of the disks $D_i$ are disjoint.

For each $i$, consider the set of indices $J_i = \{ j : \dist(\gamma_i, \beta_j) \leq 5\e r \}$, which is non-empty because $i \in J_i$. Fix $j \in J_i$, so there is $u \in \beta_j$ and $z \in \gamma_i$ with $|u-z| \leq 5\e r$. This means that $\delta_{\Omega}(z) \leq 10\e r$, and the John condition then implies that $|z-z_i| \leq 10L\e r$. Moreover, as $u \in \beta_j \subset \beta'_j$, there is a point $y_j \in \beta_j'$, that is in the set $\{y_{i'}(j')\}_{i',j'}$ of endpoints considered earlier, such that $|u-y_j| < 10\e r$. Notice that
$$|y_j - z_i| \leq |y_j - u| + |u-z| + |z-z_i| < 10\e r+ 5\e r +10L\e r \leq 25L\e r.$$
In this way, we obtain, for each $j \in J_i$, a point $y_j \in B(z_i, 25L\e r) \cap \beta_j'$, and these points are distinct because the arcs $\beta_j'$ are disjoint. Recall, though, that all of the balls $B(y_{i'}(j'),\e r/2)$ were disjoint by the way we chose the points $y_{i'}(j')$. In particular, the collection $\{B(y_j,\e r/2)\}_{j \in J_i}$ is disjoint and lies in $B(z_i, 26L\e r)$. Volume considerations then imply that $\#J_i \leq C$, where $C$ depends only on $L$.

We may now choose $I  = \{i_1,\ldots, i_k\} \subset \{1,\ldots,m\}$ to be a set of indices that contains at most one element from each set $J_i$, such that $k \geq m/2C$. It suffices to show that $k$ is uniformly bounded by a constant depending only on $L$. Consider the set of crossing arcs $\beta_{i_1},\ldots,\beta_{i_k}$ and their corresponding John arcs $\gamma_{i_1}, \ldots, \gamma_{i_k}$. For $j,j'$ distinct, we have $\dist(\gamma_{i_j}, \beta_{i_{j'}}) > 5 \e r$; otherwise both $i_j$ and $i_{j'}$ would be in $J_{i_j}$. In particular, $\gamma_{i_j}$ lies entirely in one connected component of
$$\lp \cl{B}(x,11r/6) \backslash B(x,7r/6)\rp \backslash (\beta_{i_{j-1}} \cup \beta_{i_{j+1}}),$$
i.e., the component that $\beta_{i_j}$ lies in. Moreover, the disk $D_{i_j}$ lies entirely in this same component. Indeed, $D_{i_j}$ does not intersect $\beta_{i_{j-1}}$ or $\beta_{i_{j+1}}$, and $\gamma_{i_j}$ connects the center of $D_{i_j}$ to the point $z_{i_j} \in \Omega$, which has $\dist(z_{i_j}, \beta_{i_j}) < 5\e r$ (the segment from $z_{i_j}$ to the nearest point on $\beta_{i_j}$ cannot meet $\beta_{i_{j-1}}$ or $\beta_{i_{j+1}}$).

Hence, we can conclude that if $D_{i_j}$ and $D_{i_{j'}}$ are not disjoint, then $j' = j \pm 1 \mod k$. Consequently, at least $k/3$ of the disks $D_{i_1}, \ldots, D_{i_k}$ are disjoint. All of these lie in the ball $B(x,2r)$, so again volume considerations guarantee that $k$ is bounded by a uniform constant depending only on $L$.
\end{proof}

Combining this with Theorem \ref{cdim}, we obtain Theorem \ref{John1} immediately. In particular, if $\Omega$ is a John domain such that $\partial \Omega$ is connected and has at least two points, then $\Cdim(\partial \Omega) = \Cdim_{\AR}(\partial \Omega)=1$. We have already mentioned that the conformal dimension need not be attained by a metric space in $\mathcal{J}(\partial \Omega)$ or in $\mathcal{J}_{\AR}(\partial \Omega)$. Instead, similar to the questions raised in Section \ref{qtree}, it would be interesting to know whether the conformal dimension of $\partial \Omega$ can be calculated using planar sets.

\begin{question}
Can one characterize the Jordan curves $\Gamma$ for which $\Cdim_{\AR}(\Gamma) = 1$ and is attained? Is it attained by a $1$-regular curve in $\hat{\C}$ that is the image of $\Gamma$ under a quasiconformal homeomorphism of $\hat{\C}$? 
\end{question}

Related to these considerations, we should mention the following fact, which is not difficult to establish using Menger's theorem.

\begin{prop} \label{1reg}
If a metric space $X$ is Ahlfors 1-regular, then $X$ has the discrete UWS property.
\end{prop}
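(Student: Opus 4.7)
The plan is to mimic the combinatorial setup used in the proof of Theorem \ref{Johnuws}, replacing the John arcs with a Fubini-type averaging argument that crucially uses $1$-regularity. Let $C_0$ denote the regularity constant, fix $x \in X$, a scale $0<r\leq \diam(X)/10$, and $\e \in (0,1/100)$; by Lemma \ref{weakuws}, the case of larger $\e$ is handled trivially by taking $K$ to be a maximal $\e r$-separated set (which is finite by doubling, since $1$-regularity implies doubling). Let $P$ be a maximal $2\e r$-separated subset of $\cl{B}(x,3r)$ and form the graph $G$ on $P$ by joining $u,v$ with an edge when $B(u,5\e r) \cap B(v,5\e r) \neq \emptyset$, i.e., $|u-v|<10\e r$. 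Define
\[
A = \{v \in P : B(v,2\e r) \cap B(x,r) \neq \emptyset\}, \qquad A' = \{v \in P : B(v,2\e r) \cap (X \setminus \cl{B}(x,2r)) \neq \emptyset\}.
\]
Just as in the Theorem \ref{Johnuws} proof, any discrete $\e r$-path from $B(x,r)$ to $X \setminus \cl{B}(x,2r)$ yields a vertex path in $G$ from $A$ to $A'$, so a vertex set that separates $A$ from $A'$ in $G$ gives (after applying Lemma \ref{weakuws}) the required cut set.

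By Menger's Theorem, the minimum size of such a separator equals the maximum number $m$ of pairwise vertex-disjoint $A$-to-$A'$ paths in $G$. So the heart of the argument is to bound $m$ by a constant depending only on $C_0$. Fix disjoint paths $\eta_1,\ldots,\eta_m$; all of their vertices are distinct, so the balls $B(v,\e r/2)$ around those vertices are pairwise disjoint (since $P$ is $2\e r$-separated). The key geometric observation is that consecutive vertices of each $\eta_i$ are within $10\e r$ of each other, while the starting vertex satisfies $|v_0-x|<r+2\e r$ and the terminal one satisfies $|v_{\ell}-x|>2r-2\e r$. Hence for any $t \in (r+2\e r,\, 2r-2\e r)$, the scalar sequence $|v_j-x|$ crosses the value $t$ in jumps of size less than $10\e r$, forcing each path to contain a vertex in the shell
\[
S_t = \{y \in X : |\,|y-x| - t\,| \leq 11\e r\},
\]
and the corresponding half-balls $B(v,\e r/2)$ all lie in the slightly enlarged shell $\tilde{S}_t$ of radial width at most $25\e r$.

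The remaining step is a Fubini/averaging computation. For each $y$, the set of $t \in (r,2r)$ with $y \in \tilde{S}_t$ has length at most $50 \e r$, so
\[
\int_{r}^{2r} \mu(\tilde{S}_t) \, dt \leq 50\e r \cdot \mu(\cl{B}(x,3r)) \leq 150 C_0 \e r^2,
\]
using only the upper Ahlfors bound. Therefore some $t^* \in (r+2\e r,\,2r-2\e r)$ satisfies $\mu(\tilde{S}_{t^*}) \leq 300 C_0 \e r$. Inside $\tilde{S}_{t^*}$ sit the $m$ disjoint balls $B(v_i,\e r/2)$, each of measure at least $C_0^{-1}\e r/2$ by the lower Ahlfors bound, which forces $m \leq 600\, C_0^{2}$. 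Menger's Theorem then supplies the desired cut set, and Lemma \ref{weakuws} promotes the conclusion to the full discrete UWS property.

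The main obstacle, such as it is, is purely bookkeeping constants and verifying that the small-$\e$ regime and the averaging interval are compatible. The substantive point worth emphasizing is that the exponent $1$ is essential: for $Q$-regular spaces with $Q>1$, the shell measure is $O(\e r^Q)$ while disjoint $\e r$-balls have measure $\Omega((\e r)^Q)$, so the resulting bound on $m$ degenerates as $\e \to 0$. This is consistent with the role of the discrete UWS property as an obstruction to $\Cdim_{\AR}(X)>1$.
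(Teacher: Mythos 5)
Your proof is correct and is essentially the paper's argument: the same graph on a maximal $2\e r$-separated net in $\cl{B}(x,3r)$, the same sets $A,A'$, the same reduction via Menger's theorem and Lemma \ref{weakuws}, and a bound on the number of pairwise disjoint vertex paths coming from Ahlfors $1$-regularity. The only divergence is the final count: the paper simply notes that each disjoint path must contain at least $(20\e)^{-1}$ vertices, so its disjoint $\e r$-balls carry measure $\gtrsim r/C$ inside $B(x,4r)$, whereas you select a thin annular shell of measure $\lesssim C_0\e r$ by a Fubini average and place one disjoint $\e r/2$-ball per path inside it; both versions exploit the exponent $1$ in the same way and give $m \lesssim C_0^2$.
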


\begin{proof}
First, we note that Ahlfors regular metric spaces are always doubling. Fix $x \in X$, $0<r<\diam(X)/2$, and $0<\e<1/50$. Let $P$ be a maximal $2\e r$-separated set in $B(x,3r)$, so that $P$ is finite. Let $G$ be the graph whose vertex set is $P$ with an edge between $u,v \in P$ if 
$$B(u,5\e r) \cap B(v, 5\e r) \neq \emptyset.$$
As in the previous proof, define the sets 
$$A = \{v \in P : B(v,2\e r) \cap B(x,r) \neq \emptyset \}$$
$$A' = \{v \in P : B(v,2\e r) \cap (X \backslash \cl{B}(x,2r)) \neq \emptyset \},$$
so that $A$ and $A'$ are disjoint vertex sets in $G$. Note that every discrete $\e r$-path $x_0,\ldots,x_\ell$ from $B(x,r)$ to $X \backslash \cl{B}(x,2r)$ gives a vertex path $v_0,\ldots,v_\ell$ from $A$ to $A'$ in $G$ by choosing $v_i \in P$ with $d(v_i,x_i) < 2\e r$. By Lemma \ref{weakuws}, it suffices to show that there are a uniformly bounded number of vertices in $G$ that separate $A$ from $A'$. Using Menger's theorem, this is equivalent to showing that there is a uniformly bounded number of disjoint vertex paths in $G$ from $A$ to $A'$.

To this end, let $\eta_1,\ldots, \eta_m$ be disjoint vertex paths from $A$ to $A'$, which we write as 
$$\eta_i = x_i(0),x_i(1),\ldots,x_i(\ell_i),$$
where $x_i(j) \in P$ for each $i$ and $j$. Recall that $P \subset B(x,3r)$ by definition. For each $i$, we have
$$r/2 \leq d(x_i(0),x_i(\ell_i)) \leq \sum_{j=1}^{\ell_i} d(x_i(j-1),x_i(j)) \leq \ell_i  \cdot 10\e r,$$
so that $\ell_i \geq (20\e)^{-1}$. Moreover, the collection of balls 
$$\{B(x_i(j), \e r) : 1\leq i \leq m, \hspace{1pt} 0\leq j \leq \ell_i\},$$ 
which are all contained in $B(x, 4r)$, is disjoint because $P$ is $2\e r$-separated. Consequently, if $\mu$ is an Ahlfors 1-regular Borel measure on $X$, then we have
$$4Cr \geq \mu(B(x,4r)) \geq \sum_{i=1}^m \sum_{j=0}^{\ell_i} \mu(B(x_i(j), \e r)) \geq m \cdot \frac{1}{20\e} \cdot \frac{\e r}{C},$$
where $C$ is the constant from the Ahlfors regularity condition. Thus, $m \leq 80C^2$, which is a uniform bound, as desired.
\end{proof}

\section{Weak tangents of John domains} \label{WTsec}

The UWS property and the discrete UWS property are both defined by connectivity conditions that must hold at all locations and scales in the metric space. Namely, the condition must be true at each $x \in X$ and for all $0<r < C^{-1}\diam(X)$; we do not allow the scale $r$ to depend on the location $x$. Thus, these properties are stronger than any related infinitesimal condition, for example, where the scale is allowed to depend on $x$. 

In a similar spirit, one could investigate the infinitesimal geometry of metric spaces that have the discrete UWS property. More specifically, we will look at connectivity properties of tangent spaces at each point. To describe this, we need some definitions. First, a pointed metric space $(X,d,p)$ is a metric space $(X,d)$ along with a distinguished base-point $p \in X$.

\begin{definition}
A sequence of pointed metric spaces $(X_n, d_n, p_n)$ converges to a pointed metric space $(Y, d', p)$ in the Gromov-Hausdorff sense if for every $\e > 0$ and $R \geq1$, there is $N \in \N$ such that for $n \geq N$, there is a map $\phi_n \colon B(p_n,R) \rightarrow Y$ for which
\begin{itemize}
\item[\textup{(i)}] $\phi_n(p_n) = p$,
\item[\textup{(ii)}] $\phi_n(B(p_n,R))$ is $\e$-dense in $B(p,R)$, and
\item[\textup{(iii)}] $|d_n(x,y) - d'(\phi_n(x),\phi_n(y))| < \e$ for each pair $x,y \in B(p_n,R)$.
\end{itemize}
\end{definition}

We should note that there are several different, but equivalent, definitions of Gromov-Hausdorff convergence. The definition we use here comes from \cite{BK}. In what follows, we will use the fact that every sequence $(X_n, d_n, p_n)$ of uniformly doubling pointed metric spaces has a subsequence that converges in the Gromov-Hausdorff sense. Of particular interest to us are sequences that arise as re-scalings of a given metric space, either around a common point or around varying points. Informally, this corresponds to zooming into the metric space at certain locations, along scales tending to zero.

\begin{definition}
Let $(X,d)$ be a doubling metric space. We say that a complete pointed metric space $(Y,d', p)$ is a weak tangent of $X$ if there is a sequence of points $p_n \in X$ and scales $\lambda_n > 0$, with $\lim_{n \rightarrow \infty} \lambda_n = 0$, such that the pointed metric spaces $(X,\lambda_n^{-1} d, p_n)$ converge in the Gromov-Hausdorff sense to $(Y,d', p)$.
\end{definition}

If $X$ is doubling, then each re-scaling $(X,\lambda^{-1} d)$ is also doubling, with the same constant. Thus, doubling metric spaces always have weak tangents. Note that we require weak tangents to be complete. This is not a restrictive assumption: if $(Y,d',p)$ is a Gromov-Hausdorff limit of a sequence of pointed metric spaces, then the completion of $(Y,d',p)$ is also a Gromov-Hausdorff limit of the same sequence. In the literature, weak tangents coming from sequences with a fixed base-point $p_n=x \in X$ are often simply called tangents at $x$. Thus, every tangent is a weak tangent.

Before discussing metric spaces with the discrete UWS property in general, let us turn our attention to linearly connected spaces and, in particular, to quasi-circles. The following characterization is probably known, but it does not seem to be present in the literature. Here, by a metric circle, we simply mean a metric space that is homeomorphic to $\Sp^1$.

\begin{theorem} \label{metriccircle}
A doubling metric circle is a quasi-circle if and only if every weak tangent is connected.
\end{theorem}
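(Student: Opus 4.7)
The plan is to invoke the Tukia--Väisälä characterization to reduce this to the statement: a doubling metric Jordan curve $X$ is linearly connected if and only if every weak tangent is connected.

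For the forward direction, linear connectivity with constant $\lambda$ is scale-invariant, so each rescaled space $(X, \lambda_n^{-1}d)$ is again $\lambda$-linearly connected. Given $y, z$ in a weak tangent $(Y, d', p)$ realized as a Gromov--Hausdorff limit of $(X, \lambda_n^{-1}d, x_n)$, I would approximate them by points $y_n, z_n$ in the rescaled spaces, produce continua $E_n \ni y_n, z_n$ of diameter $\leq \lambda d(y_n, z_n)$, and extract a Hausdorff subsequential limit $E$ inside a large closed ball of $Y$ (which is compact since $Y$ is doubling and complete, hence proper). The set $E$ is then a continuum containing $y, z$ with $\diam(E) \leq \lambda d'(y, z)$, so $Y$ is $\lambda$-linearly connected, in particular connected.

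For the reverse direction I argue the contrapositive. Assume $X$ is not linearly connected. For each $n$, choose $x_n, y_n \in X$ with $r_n := d(x_n, y_n)$ such that both arcs $\alpha_n^{\pm}$ of $X$ between $x_n$ and $y_n$ have diameter $\geq M_n r_n$ with $M_n \to \infty$. Pass to a subsequential Gromov--Hausdorff limit $(Y, d', p)$ of $(X, r_n^{-1}d, x_n)$ and set $q = \lim y_n$, so $d'(p, q) = 1$. The task is to show $Y$ is disconnected. The key structural input from Jordan-curve topology is that the only connected subsets of $X$ are points, arcs, and $X$ itself; hence any continuum containing both $x_n$ and $y_n$ must contain one of $\alpha_n^{\pm}$ and so has rescaled diameter $\geq M_n$. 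Consequently, in $X_n = (X, r_n^{-1}d)$, the connected component of $x_n$ in any fixed closed ball $\overline{B}(x_n, R)$ is disjoint from $y_n$ once $M_n > 2R$.

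The main obstacle is to promote this approximating-space separation into a genuine disconnection in the GH limit $Y$: a priori, other excursions of the curve into the ball, or finer-scale phenomena, could merge in the limit and connect $p$ to $q$. I would rule this out via a hops-type argument, modeled on Lemma \ref{hops}: assuming for contradiction that a continuum in $Y$ connects $p$ and $q$, approximate it by a discrete chain and push it back to $X_n$ via the Gromov--Hausdorff approximation. Using doubling to bound the number of long excursions and the rigidity of Jordan-curve topology (distinct arcs meet only at endpoints), the hops argument should let us extract from the chain a genuine subcontinuum of $X_n$ through $x_n$ and $y_n$ whose rescaled diameter is bounded uniformly in $n$. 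By the structural observation above, this subcontinuum must contain one of $\alpha_n^{\pm}$, contradicting $M_n \to \infty$. Running this hops extraction under only the doubling hypothesis and the bare topology of a Jordan curve is where I expect the technical work to lie.
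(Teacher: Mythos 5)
Your forward direction is fine: transferring the continua $E_n$ through the Gromov--Hausdorff approximations and taking a limit continuum does yield (linear) connectedness of the tangent, and this is a legitimate variant of the paper's route, which instead pushes discrete chains forward and invokes Lemma \ref{hops}. The genuine gap is in the reverse direction, precisely at the step you flag as "where the technical work lies": from connectedness of the tangent you can only extract, for each $\epsilon>0$ and all large $n$, a discrete $\epsilon r_n$-chain from $x_n$ to $y_n$ inside $B(x_n,R r_n)$ for some fixed $R$; you cannot upgrade this to a subcontinuum of $X$ through $x_n$ and $y_n$ of rescaled diameter $O(1)$, and in fact no such continuum need exist. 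Concretely, consider a planar Jordan curve which near $x_n$ runs along a line through $x_n$ carrying thin "comb teeth" that dip to within $\delta_n r_n$ (with $\delta_n\to 0$) of a parallel strand through $y_n$ at distance $r_n:=d(x_n,y_n)$, the two strands being joined to each other only at distance $\geq M_n r_n$ with $M_n\to\infty$. Then both arcs of the curve joining $x_n$ and $y_n$ have diameter $\geq M_n r_n$, so these are exactly the pairs your contrapositive selects; yet the blow-ups $(X,r_n^{-1}d,x_n)$ converge to a \emph{connected} tangent, because the tooth tips touch the strand through $q$ in the limit. Being planar, the curve is automatically doubling, so this is a legitimate instance of your setup: the disconnected weak tangent promised by the theorem appears only at a different location and scale (for instance at a tooth tip at scale $\delta_n r_n$), not at the pairs and scales you blow up at. So no "hops-type" extraction using doubling and Jordan-curve topology can close this step; the proposed contradiction is simply not there.

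The repair is to select the bad pairs by failure of a \emph{chain} condition rather than the arc condition, which is exactly what the paper does. Lemma \ref{hops} shows that if $X$ is not linearly connected, then for every $n$ there exist $x_n,y_n$ that cannot be joined by a discrete $d(x_n,y_n)/4$-path inside $B(x_n,n\,d(x_n,y_n))$. Blowing up along these pairs with $\lambda_n=d(x_n,y_n)$, the assumed connectedness of the limit tangent gives a discrete chain from $p$ to the limit of the $y_n$, and pulling it back through the approximation maps produces a discrete $d(x_{n_k},y_{n_k})/4$-path inside $B(x_{n_k},2R\,d(x_{n_k},y_{n_k}))$, contradicting the choice of the pair directly. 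No continuum in $X$ is ever produced, and no Jordan-curve topology is used, which is why the paper's Proposition \ref{qarc} holds for arbitrary complete, connected, bounded, doubling spaces; the circle structure enters only through the Tukia--V\"ais\"al\"a reduction, as in your first step.
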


A classical result of P. Tukia and J. V{\"a}is{\"a}l{\"a} \cite{TV} says that a metric circle is a quasi-circle if and only if it is doubling and linearly connected. Thus, it suffices to prove the following proposition.

\begin{prop} \label{qarc}
Let $X$ be a complete, connected, doubling, bounded metric space. Then $X$ is linearly connected if and only if every weak tangent of $X$ is connected.
\end{prop}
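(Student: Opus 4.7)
For the forward direction, the plan is to use Hausdorff limits of compact continua. Suppose $X$ is $\lambda$-linearly connected and $(X, \lambda_n^{-1}d, p_n) \to (Y, d', p)$ in the pointed Gromov-Hausdorff sense. I will realize this convergence through isometric embeddings of the rescaled spaces $\tilde X_n := (X, \lambda_n^{-1}d)$ and $Y$ into a common proper metric space $Z$ in which $\tilde X_n \to Y$ in pointed Hausdorff convergence. Given $y_1, y_2 \in Y$, I approximate them by points $y_i^{(n)} \in \tilde X_n$ and invoke scale-invariance of linear connectivity to find compact continua $E_n \subset \tilde X_n$ containing $y_1^{(n)}, y_2^{(n)}$ with uniformly bounded diameter. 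Passing to a Hausdorff subsequential limit $E \subset Y$, and using the standard fact that Hausdorff limits of compact connected sets in a bounded region of $Z$ are compact connected, yields a continuum in $Y$ containing $y_1, y_2$; hence $Y$ is connected.

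For the reverse direction I argue the contrapositive. Assume $X$ is not linearly connected, so that for each $n$ there exist $x_n, y_n \in X$ with $r_n = d(x_n, y_n)$ such that every compact continuum containing both has diameter exceeding $n r_n$. Since $X$ is itself a compact continuum of diameter $\diam(X)$, I obtain $r_n \leq \diam(X)/n \to 0$. I rescale to $\tilde X_n := (X, r_n^{-1}d, x_n)$, in which $d_{\tilde X_n}(x_n, y_n) = 1$, and pass to a pointed GH-subsequential limit $(Y, d', p)$, which is a weak tangent of $X$, with $y_n \to q$ and $d'(p, q) = 1$. The goal is to force $Y$ to be disconnected, contradicting the hypothesis.

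The main obstacle is that Gromov-Hausdorff limits may gain connectivity, so the disconnectedness exhibited by the approximating sequence need not persist to the limit. To handle this, for a fixed large $M$ I will examine $C_n$, the compact connected component of $x_n$ in $\overline{B}_{\tilde X_n}(x_n, M)$. The hypothesis on $(x_n, y_n)$ forces $y_n \notin C_n$ once $n > 2M$, so $\sigma_n := \dist_{\tilde X_n}(y_n, C_n) > 0$. Passing to a subsequence where $\sigma_n \to \sigma_\infty \in [0, 1]$, I split into cases. When $\sigma_\infty > 0$, the Hausdorff limits in the common embedding of $C_n$ and of the component of $y_n$ produce two disjoint compact continua in $\overline{B}_Y(p, M)$ separated by a uniform gap; combining this with the doubling bound on the number of macroscopic components will yield a clopen disconnection of $Y$ separating $p$ from $q$. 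When $\sigma_\infty = 0$, the collapse of the separating gap concentrates at ``pinch points'' in $Y$, and I will pass to a further weak tangent at such a point (which remains a weak tangent of $X$ via a diagonal construction on scales) in order to exhibit the disconnection. Controlling this case analysis so that it terminates with a genuinely disconnected weak tangent, rather than producing an infinite regress of connected tangents, is the hardest step.
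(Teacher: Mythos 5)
Your forward direction is fine: realizing the pointed convergence inside a common proper space and taking Hausdorff (Blaschke) limits of the continua supplied by linear connectivity is a correct, if slightly different, route to connectivity of the tangent (the paper instead argues with discrete paths and Lemma \ref{hops}, which gives the stronger conclusion that every weak tangent is linearly connected with a uniform constant). The reverse direction, however, has a genuine gap, and you have identified it yourself without closing it. Your strategy is to exhibit a disconnected weak tangent by tracking the topological component $C_n$ of $x_n$ in $\cl{B}(x_n, M r_n)$, but this notion is not Gromov--Hausdorff robust. Even in your ``easy'' case $\sigma_\infty>0$, two disjoint compact continua with a definite gap inside $\cl{B}_Y(p,M)$ do not disconnect $Y$: the ball may contain many other components of the approximating balls, accumulating on both continua and merging in the limit into a bridge, and connections may also occur outside the ball of radius $M$. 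The appeal to ``the doubling bound on the number of macroscopic components'' is not available: doubling bounds the cardinality of uniformly separated sets, not the number of connected components of a ball, and those components need not be pairwise separated by any definite amount (they can accumulate, exactly as in $\{0\}\cup\{1/n : n\in\N\}$), so no clopen disconnection of $Y$ results. The case $\sigma_\infty=0$, with its pinch points and the threatened infinite regress of re-blow-ups, is precisely the heart of the matter, and your proposal leaves it open.

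The missing idea is to abandon continua and components in favor of discrete $\e$-chains, which is how the paper proceeds. Lemma \ref{hops} converts failure of linear connectivity into a scale-relative chain gap: there are pairs $x_n,y_n$ that cannot be joined by a discrete $d(x_n,y_n)/4$-path inside $B(x_n, n\, d(x_n,y_n))$. One then argues in the opposite direction from your plan: rather than pushing disconnectedness up to the limit, one pushes connectivity of the limit back down. A connected complete space is $\e$-chain connected for every $\e>0$, so the weak tangent $(Y,d',p)$ obtained by blowing up along $\lambda_n=d(x_n,y_n)$ contains a discrete $(1/20)$-chain from $p$ to the limit $y$ of the images of $y_{n_k}$; transporting this finite chain through the approximating maps produces a discrete $d(x_{n_k},y_{n_k})/4$-path from $x_{n_k}$ to $y_{n_k}$ inside $B(x_{n_k}, 2R\, d(x_{n_k},y_{n_k}))$ with $2R<n_k$, contradicting the choice of the pair. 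This completely sidesteps both the gain-of-connectivity problem and the regress you were worried about; without some substitute of this kind (note also that your chosen negation, ``no small continuum contains both points,'' carries no quantitative gap to exploit), the second half of your argument does not go through.
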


Our proof of this proposition, like many of our previous proofs, will use a discretization argument. The following lemma is central to this argument. For similar considerations, see \cite[Proposition 4 and Lemma 5]{BKplanes}.

\begin{lemma} \label{hops}
Suppose that $X$ is a complete metric space with the following property: there exists $L \geq 1$ such that any $x,y \in X$ can be joined by a discrete $d(x,y)/4$-path inside $B(x,L d(x,y))$. Then $X$ is $4L$-linearly connected.
\end{lemma}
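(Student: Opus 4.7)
The plan is to build a compact connected set $E$ containing $x$ and $y$ with $\diam(E) \leq 4L d(x,y)$ by iterated refinement of discrete paths. Assume $x \neq y$ and write $r = d(x,y)$. First, apply the hypothesis to produce $E_0$, a discrete $r/4$-path from $x$ to $y$ inside $B(x, Lr)$. Inductively, given an ordered discrete path $E_k = (v^{(k)}_0 = x, v^{(k)}_1, \ldots, v^{(k)}_{n_k} = y)$ with consecutive distances at most $r/4^{k+1}$, apply the hypothesis to each consecutive pair to obtain a discrete $d(v^{(k)}_{i-1}, v^{(k)}_i)/4$-path from $v^{(k)}_{i-1}$ to $v^{(k)}_i$ inside $B(v^{(k)}_{i-1}, L \cdot d(v^{(k)}_{i-1}, v^{(k)}_i))$; concatenating these refinements in order produces $E_{k+1}$. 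By construction, $E_{k+1}$ is a discrete $r/4^{k+2}$-path from $x$ to $y$, contains $E_k$, and each newly added vertex lies within $Lr/4^{k+1}$ of some vertex of $E_k$. Define $E = \overline{\bigcup_k E_k}$.

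Next I would establish the quantitative properties of $E$. The diameter bound follows by summing a geometric series: every vertex of every $E_k$ is within $Lr \sum_{j=0}^{\infty} 4^{-j} = (4/3)Lr$ of $x$, so $E \subset \overline{B}(x,(4/3)Lr)$ and $\diam(E) \leq (8/3)Lr \leq 4Lr$. For total boundedness, iterate the estimate that each stage-$j$ new vertex is within $Lr/4^j$ of some stage-$(j-1)$ vertex: this shows every vertex of $E_j$ with $j \geq k$ is within $(1/3)Lr/4^k$ of the finite set $E_k$. Hence, for any $\epsilon > 0$, a sufficiently large $E_k$ is a finite $\epsilon$-net for $\bigcup_j E_j$ and thus for its closure $E$. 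Since $X$ is complete and $E$ is closed in $X$, $E$ is compact.

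Finally, I would show $E$ is connected via a standard well-chained/compact argument. Suppose $E = A \sqcup B$ were a nontrivial separation into closed subsets. Both $A$ and $B$ are compact, so $\delta := \dist(A,B) > 0$. Without loss of generality $x \in A$ and $y \in B$. Choose $k$ with $r/4^{k+1} < \delta$. The ordered vertices of $E_k \subset E$ start at $x \in A$, end at $y \in B$, and have consecutive distances at most $r/4^{k+1} < \delta$; hence there must be adjacent vertices $v^{(k)}_{i-1} \in A$ and $v^{(k)}_i \in B$ with $d(v^{(k)}_{i-1}, v^{(k)}_i) < \delta$, contradicting $\dist(A,B) = \delta$. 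Therefore $E$ is compact and connected, contains $x$ and $y$, and has $\diam(E) \leq 4L d(x,y)$, establishing $4L$-linear connectivity.

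I expect the main obstacle to be carefully bookkeeping the two parallel geometric-series estimates — one giving the uniform bound $(4/3)Lr$ for the distance from any constructed vertex to $x$, and the other giving the exponentially decaying bound $(1/3)Lr/4^k$ for how far the rest of $\bigcup_j E_j$ lies from $E_k$ — and in particular ensuring that applying the hypothesis at scale $d(v^{(k)}_{i-1}, v^{(k)}_i)$ produces enclosing balls that shrink geometrically rather than accumulating. Once those bounds are in hand, compactness and connectedness follow from the classical fact that a compact, well-chained metric space is connected.
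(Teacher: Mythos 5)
Your refinement scheme, the diameter bound $\diam(E)\le \tfrac{8}{3}Lr\le 4Lr$, and the total-boundedness estimate giving compactness of $E=\overline{\bigcup_k E_k}$ are all correct, and your route is genuinely different from the paper's: the paper parametrizes the nested chains by a dense set of points in $[0,1]$, proves the resulting map is uniformly continuous, and extends it by completeness to obtain a compact connected image $f([0,1])\subset\overline{B}(x,2Lr)$, whereas you take the closure of the union of the chains and argue compactness plus chain-connectedness, avoiding the parametrization bookkeeping.

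The connectedness step, however, has a genuine gap as written. In the separation $E=A\sqcup B$, the reduction ``without loss of generality $x\in A$ and $y\in B$'' is not justified: nothing forces a separation of $E$ to put $x$ and $y$ on opposite sides, and if both lie in $A$ then the chain $E_k$ may remain entirely in $A$, producing no contradiction --- a priori $B$ could consist of limit points of $\bigcup_k E_k$ belonging to no $E_k$. Equivalently, to invoke ``compact and well-chained implies connected'' you must check that \emph{every} pair of points of $E$, not just the pair $x,y$, is joined by $\epsilon$-chains in $E$ for every $\epsilon>0$. The nesting you already established supplies the fix: each vertex of $\bigcup_j E_j$ lies in $E_k$ for all large $k$, hence is joined to $x$ inside $E$ by a chain with gaps at most $r/4^{k+1}$, and an arbitrary point of $E$ lies within $\epsilon$ of some vertex by density, so it too is $\epsilon$-chained to $x$. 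With that in hand, take arbitrary $a\in A$, $b\in B$, set $\delta=\dist(A,B)>0$, and concatenate $\delta/2$-chains from $a$ to $x$ and from $x$ to $b$ to reach the contradiction. So the defect is real but local, and repairable in a few lines; the rest of your argument stands.
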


\begin{proof}
We may suppose that $X$ has at least two points. Fix $x,y \in X$ distinct, and let $r = d(x,y)>0$. By assumption, there is a discrete $r/4$-path 
$$x=x(0), x(1), \ldots, x(n)=y$$ 
with $x(i) \in B(x(0),L r)$. Then for each $1\leq i \leq n-1$, there is a discrete $r/16$-path 
$$x(i) = x(i,0), x(i,1), \ldots, x(i,n_i) = x(i+1)$$ 
inside $B(x(i),L r/4)$. We continue building such paths inductively. Namely, if $x(i_1,\ldots,i_k)$ and $x(i_1,\ldots,i_k+1)$ are consecutive points in a discrete $r/4^k$-path built at step $k$, then our assumption guarantees that there is a discrete $r/4^{k+1}$-path
$$x(i_1,\ldots,i_k) = x(i_1,\ldots,i_k, 0), \ldots, x(i_1,\ldots,i_k, n_{i_1,\ldots,i_k}) = x(i_1,\ldots,i_k+1)$$
which lies in the ball $B(x(i_1,\ldots,i_k), L r/4^k)$. Without loss of generality, we can assume that $n_{i_1,\ldots,i_k} \geq 2$ for each tuple $(i_1,\ldots,i_k)$.

We now want to use these points to build a continuous map $f\colon [0,1] \rightarrow X$ connecting $x$ and $y$. We first define $f$ on a dense subset of $[0,1]$. To do this, let $a(i) = i/n$ for each $0\leq i \leq n$. If we have already defined $a(i_1,\ldots,i_k)$ and $a(i_1,\ldots,i_k+1)$, with $\ell = a(i_1,\ldots,i_k+1) - a(i_1,\ldots,i_k) >0$, then for each $0 \leq j \leq n_{i_1,\ldots,i_k}$, we let 
$$a(i_1,\ldots,i_k, j) = a(i_1,\ldots,i_k) + j \ell/n_{i_1,\ldots,i_k}.$$ 
Then $ \{ a(i_1,\ldots,i_k, j) : 0 \leq j \leq n_{i_1,\ldots,i_k} \}$ is an equally-spaced set in the interval $[a(i_1,\ldots,i_k), a(i_1,\ldots,i_k+1)]$, with beginning-point $a(i_1,\ldots,i_k, 0) = a(i_1,\ldots,i_k)$ and ending-point $a(i_1,\ldots,i_k, n_{i_1,\ldots,i_k}) = a(i_1,\ldots,i_k +1)$.

Define $f(a(i_1,\ldots,i_k)) = x(i_1,\ldots,i_k)$ for each choice of $i_1,\ldots,i_k$. This definition is compatible among the different values of $k$. If $E_k$ denotes the set of points of the form $a(i_1,\ldots,i_k)$ and $E= \cup_{k \in \N} E_k$, then it is clear that $E$ is dense in $[0,1]$, as each $n_{i_1,\ldots,i_k} \geq 2$. Also, note that $f(E) \subset B(x,2L r)$, as each $t \in E_k$ has
$$d(x,f(t)) \leq L r + L r/4 + \ldots + L r/4^k < 2L r.$$

Moreover, we claim that $f$ is uniformly continuous on $E$. Indeed, if $\e >0$, choose $k$ large enough so that $4L r/4^k < \e$, and then choose $\delta >0$ smaller than the least distance between consecutive points in $E_k$. If $|s-t| < \delta$ with $s,t \in E$, then there are three consecutive points $a_k,b_k,c_k \in E_k$ with $s,t \in [a_k,c_k]$. The construction of $f$ gives
$$d(f(s),f(b_k)), \hspace{2pt} d(f(t),f(b_k)) \leq L r/4^k + L r/4^{k+1} + L r/4^{k+2} + \ldots < 2L r/4^k,$$
so that $f(s),f(t) \in B(f(b_k),2L r/4^k)$. In particular, $d(f(s),f(t)) < 4L r/4^k < \e$.

As $X$ is complete, $f$ extends continuously to $[0,1]$. Then $f([0,1]) \subset \cl{B}(x,2L r)$ is a compact connected set joining $x$ and $y$. We conclude that $X$ is $4L$-linearly connected.
\end{proof}

\begin{proof}[\textbf{Proof of Proposition \ref{qarc}}]
Suppose that $(X,d)$ is linearly connected with constant $L \geq 1$. Let $(Y, d', p)$ be a weak tangent of $X$ so that it is the Gromov-Hausdorff limit of $(X,\lambda_n^{-1} d, p_n)$, where $\lambda_n \rightarrow 0$. Fix $u,v \in Y$, and let $R = 4L\max\{ d'(p,u), d'(p,v)\}$. Then let $\e = d'(u,v)/8$. We can find $n \in \N$ large and a map $\phi \colon B(p_n, \lambda_n R) \rightarrow Y$ such that the image $\phi(B(p_n, \lambda_n R))$ is $\e$-dense in $B(p, R)$ and 
$$|\lambda_n^{-1} d(x,y) - d'(\phi(x),\phi(y)) | < \e$$
for each pair of points $x,y \in B(p_n, \lambda_n R)$.

Choose $x,y \in B(p_n, \lambda_n R)$ for which $d'(\phi(x),u) < \e$ and $d'(\phi(y),v) < \e$. By linear connectivity, there is a compact connected set $E \subset X$ with $x,y \in E$ and $\diam(E) \leq Ld(x,y)$. We can then find a discrete $\lambda_n \e$-path $x= z_0, z_1,\ldots, z_{\ell} = y$ from $x$ to $y$ consisting entirely of points in $E$. Observe that
$$u, \phi(z_0), \phi(z_1),\ldots, \phi(z_\ell), v$$
is a discrete $2\e$-path from $u$ to $v$ in $Y$. Moreover, for each $i$ we have
$$d'(u,\phi(z_i)) \leq d'(\phi(x),\phi(z_i)) + \e \leq \lambda_n^{-1}d(x,z_i) + 2\e \leq L\lambda_n^{-1}d(x,y) + 2\e.$$
Note that we can bound $\lambda_n^{-1}d(x,y) \leq d'(\phi(x),\phi(y)) + \e \leq d'(u,v) + 3\e \leq 11\e$, so this gives 
$$d'(u,\phi(z_i)) \leq 11L\e + 2\e \leq 13L \e < 2Ld'(u,v).$$ 
Consequently, the sequence $u, \phi(z_0), \phi(z_1),\ldots, \phi(z_\ell), v$ is a discrete $d'(u,v)/4$-path inside the ball $B(u, 2Ld'(u,v))$. As $u,v \in Y$ were arbitrary, Lemma \ref{hops} implies that $Y$ is $8L$-linearly connected; in particular, it is connected.

For the converse, assume that every weak tangent of $X$ is connected, and suppose that $X$ were not linearly connected. By Lemma \ref{hops}, this means that for each $n \in \N$, there is a pair $x_n, y_n \in X$ that cannot be joined by a discrete $d(x_n,y_n)/4$-path inside $B(x_n, nd(x_n,y_n))$. We note that $d(x_n,y_n) \rightarrow 0$ as $n \rightarrow \infty$ because $X$ is connected and bounded. Now consider the sequence of pointed metric spaces $(X, \lambda_n^{-1} d, x_n)$ with $\lambda_n = d(x_n,y_n)$. As $X$ is doubling, this sequence is uniformly doubling, so there is a subsequence $n_k$ that converges in the Gromov-Hausdorff sense to a complete metric space $(Y,d',p)$.

Let $\e = 1/20$. By passing to a further subsequence, we can find maps 
$$\phi_{n_k} \colon B(x_{n_k}, \lambda_{n_k} k) \rightarrow Y$$
with $\phi_{n_k}(x_{n_k}) = p$, for which $\phi_{n_k}(B(x_{n_k}, \lambda_{n_k} k))$ is $\e$-dense in $B(p,k)$, and 
$$|\lambda_{n_k}^{-1}d(x,y) - d'(\phi_{n_k}(x),\phi_{n_k}(y))| < \e$$ 
for each pair $x,y \in B(x_{n_k}, \lambda_{n_k} k)$. Note that $\phi_{n_k}(y_{n_k}) \in B(p, 2)$ for each $k$, and $Y$ is proper (it is complete and doubling). By passing to yet another subsequence, we may assume that $\phi_{n_k}(y_{n_k})$ converges to a point $y \in Y$.

As $Y$ is connected, there is a discrete $\e$-path $p = u_0,u_1,\ldots,u_{\ell} = y$ in $Y$. Let $R = \max_i d'(p,u_i)$ and fix $k > 2R$ large enough that $d'(\phi_{n_k}(y_{n_k}), y) < \e$. As the image $\phi_{n_k}(B(x_{n_k}, \lambda_{n_k} k))$ is $\e$-dense in $B(p,R)$, for each $i$ we can find $v_i \in B(x_{n_k},\lambda_{n_k} k)$ with $d'(\phi_{n_k}(v_i), u_i) < \e$. In fact, we may take $v_0 = x_{n_k}$ and $v_\ell = y_{n_k}$. Note that $v_i \in B(x_{n_k}, 2R \lambda_{n_k})$ for each $i$ because
$$\lambda_{n_k}^{-1} d(x_{n_k},v_i) \leq d'(p, \phi_{n_k}(v_i)) + \e < d'(p, u_i) + 2\e < 2R.$$
Moreover, the sequence $x_{n_k} = v_0, v_1, \ldots, v_\ell = y_{n_k}$ is a discrete $4\e \lambda_{n_k}$-path. Indeed, for each $i$, we have
$$\lambda_{n_k}^{-1} d(v_i,v_{i+1}) \leq d'(\phi_{n_k}(v_i), \phi_{n_k}(v_{i+1})) + \e \leq d'(u_i,u_{i+1}) + 3\e \leq 4\e.$$
As $\e =1/20$ and $\lambda_{n_k} = d(x_{n_k},y_{n_k})$, we have shown that $x_{n_k}$ and $y_{n_k}$ can be connected by a discrete $d(x_{n_k},y_{n_k})/4$-path in the ball $B(x_{n_k}, 2R d(x_{n_k},y_{n_k}))$. Noting that $n_k \geq k > 2R$, we see that this contradicts our choice of the pair $x_{n_k}, y_{n_k}$.
\end{proof}

Let us now turn back to the discrete UWS property, boundaries of John domains, and their relationships to weak tangents. Our goal for the remainder of the section is to prove the following theorem, which is a partial analog of Theorem \ref{metriccircle}.

\begin{theorem} \label{WTjohn}
If $\Omega \subset \hat{\C}$ is an $L$-John domain with $\partial \Omega$ connected, then every weak tangent of $\partial \Omega$ has at most $N$ connected components, where $N < \infty$ depends only on $L$.
\end{theorem}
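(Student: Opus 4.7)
The argument should be a straightforward synthesis of the two main technical results already assembled in the paper. My plan is to verify that $\partial \Omega$ satisfies the hypotheses of Proposition \ref{uwsblowup} and then invoke it directly.

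First, I would observe that $\partial \Omega$, equipped with the spherical metric inherited from $\hat{\C}$, is a complete, connected, doubling metric space. Completeness is automatic because $\partial \Omega$ is a closed subset of the compact space $\hat{\C}$; connectedness is given as a hypothesis of the theorem; and the doubling property is inherited from $\hat{\C}$, whose doubling constant is an absolute constant (the sphere is a compact $2$-dimensional Riemannian manifold).

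Second, Theorem \ref{Johnuws} tells us that $\partial \Omega$ enjoys the discrete UWS property, with a constant $C$ that depends only on the John constant $L$. This supplies the remaining hypothesis needed to apply Proposition \ref{uwsblowup}, which then bounds the number of connected components of any weak tangent of $\partial \Omega$ in terms of $C$ and the doubling constant of $\partial \Omega$. Since both of these ultimately depend only on $L$ (the doubling constant being universal), the resulting bound $N$ depends only on $L$, as claimed.

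There is no real obstacle at this stage: the heavy lifting is in the two inputs. Theorem \ref{Johnuws} converts the John geometry into discrete combinatorics via Menger's theorem and the volume argument using inward-pointing John arcs, while Proposition \ref{uwsblowup} transports the discrete UWS condition across Gromov--Hausdorff limits to control components of tangents. Once those are in hand, Theorem \ref{WTjohn} is merely the corollary obtained by matching hypotheses to conclusions.
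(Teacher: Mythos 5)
Your proposal is correct and follows exactly the paper's route: the theorem is deduced by combining Theorem \ref{Johnuws} (discrete UWS for $\partial\Omega$ with constant depending only on $L$) with Proposition \ref{uwsblowup}, noting that $\partial\Omega$ is complete, connected, and uniformly doubling as a closed planar set. Nothing further is needed.
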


By Theorem \ref{Johnuws}, it suffices to prove the following proposition, which we mentioned in Section \ref{intro}. Note that the doubling constant of $\partial \Omega$ is uniform because $\partial \Omega$ is planar.

\begin{propuwsblowup}
Let $X$ be a complete, connected, doubling metric space that has the discrete UWS property with constant $C$. Then every weak tangent of $X$ has at most $N$ connected components, where $N$ depends only on $C$ and the doubling constant of $X$.
\end{propuwsblowup}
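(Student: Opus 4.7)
My approach uses two facts about the rescaled pointed spaces $X_n = (X, \lambda_n^{-1} d, p_n)$ that Gromov--Hausdorff approximate $Y$: each $X_n$ inherits the discrete UWS property of $X$ with the same constant $C$ by scale invariance, and each $X_n$ is connected, hence $\mu$-chain connected for every $\mu > 0$. A brief preliminary observation is that $Y$ itself inherits the discrete UWS property, with constant depending on $C$ and the doubling constant, by transferring cut sets for $X$ at lifts $x_n$ of points $y \in Y$ via the GH approximations $\phi_n$ and absorbing the errors through Lemma \ref{weakuws}; this fact is not strictly needed for the component bound below but explains the statement's dependence on the doubling constant.

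Suppose for contradiction that $Y$ has at least $M+1$ connected components, and pick $y_0, \ldots, y_M$ one per component. Choose $R > 0$ so that every $y_i \in B(p, R/4)$; since $Y$ is complete and doubling, $\cl{B}(p, R+1)$ is compact, and the components of this ball containing the $y_i$ form $M+1$ pairwise disjoint compact sets (two distinct $y_i$ cannot lie in the same component of the ball, since that component would be connected in $Y$ and hence inside a single component of $Y$) at positive pairwise distance. Lift to $X_n$: pick $x_i^n \in X$ with $\phi_n(x_i^n) \to y_i$; for large $n$, each $x_i^n$ lies in the rescaled ball $B(p_n, R/2)$. Since $X_n$ is $\mu$-chain connected, there exists for each $i \geq 1$ a discrete $\mu$-path $\gamma_i^n$ in $X_n$ from $x_0^n$ to $x_i^n$.

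The crucial intermediate claim is that for each $i \neq 0$, every sufficiently small $\mu$, and every sufficiently large $n$, the path $\gamma_i^n$ must exit the rescaled ball $\cl{B}(p_n, R)$. Otherwise, a subsequence of interior paths with step sizes $\mu_k \to 0$ would push forward via $\phi_{n_k}$ to discrete $\mu'$-chains in the compact ball $\cl{B}(p, R+1) \subset Y$ joining $y_0$ to $y_i$ for every $\mu' > 0$; in a compact metric space, chain-connectedness for every $\mu' > 0$ forces the endpoints into the same connected component (any separating clopen set has positive distance from its complement by compactness), contradicting the choice of $y_0$ and $y_i$. Applying the discrete UWS of $X_n$ at $p_n$ with radius $R/2$ then produces a cut set $K_n \subset B(p_n, R)$ with $\#K_n \leq C$ such that every discrete $\mu$-path from $B(p_n, R/2)$ to outside $\cl{B}(p_n, R)$ meets the $\mu$-neighborhood of $K_n$.

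The hardest step is extracting the bound $M \leq C$ from the bound $\#K_n \leq C$. For each $i \geq 1$, I would select a meeting point $q_i^n$ on the \emph{re-entry} portion of $\gamma_i^n$: writing $l_i$ for the last index at which $\gamma_i^n$ lies outside $\cl{B}(p_n, R)$, applying UWS to the sub-path from $z_{l_i}$ to $x_i^n = z_L$ gives a meeting with the $\mu$-neighborhood of some $k_i^n \in K_n$, which one may arrange to lie inside $\cl{B}(p_n, R)$ (replacing $z_{l_i}$ by $z_{l_i+1}$ if needed, at the cost of a factor of two in the constant). The tail of $\gamma_i^n$ from $q_i^n$ to $x_i^n$ then stays in $\cl{B}(p_n, R)$ by the definition of $l_i$. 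A diagonal subsequence argument makes the map $i \mapsto k_i^n$ constant in $n$ and ensures that the cut points converge in $Y$: $\phi_n(k_j^n) \to k_j^*$ for each $j$. If $M \geq C+1$, pigeonhole on the at-most-$C$ possible values forces two distinct indices $i_1, i_2$ to share a cut point; pushing the tails through $\phi_n$ yields, for every $\mu'' > 0$, a discrete $\mu''$-chain in the compact set $\cl{B}(p, R+1)$ joining $y_{i_1}$ to $y_{i_2}$ through the common limit point $k^*$. Compactness then places $y_{i_1}$ and $y_{i_2}$ in the same connected component of $\cl{B}(p, R+1)$, and hence of $Y$, contradicting their choice in distinct components. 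Therefore $M \leq C$, so $Y$ has at most $C+1$ connected components.
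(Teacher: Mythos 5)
Your proposal is correct, but it takes a genuinely different route from the paper's. The paper first proves that weak tangents inherit the discrete UWS property (Lemma \ref{weaktanuws}, with constant depending on $C$ and the doubling constant via Lemma \ref{weakuws}), then shows that every connected component of the tangent $Y$ is unbounded (using connectivity of $X$ in much the same way as your intermediate claim, but phrased downstairs in $Y$), chooses $R$ so large that every component crosses the annulus $\cl{B}(p,2R)\backslash B(p,R)$, and observes that crossing chains along distinct components have disjoint neighborhoods, so a single cut set for $Y$ must contain at least one point per component. You instead keep the entire argument upstairs in the rescaled copies of $X$: connectivity of $X$ gives fine chains joining lifts of representatives of distinct components; the compactness/quasi-component argument forces those chains to exit a fixed ball; the discrete UWS property of $X$ itself, applied at $p_n$ with the exact radius pair $(R/2,R)$ so that no appeal to Lemma \ref{weakuws} is needed, supplies one cut set of size at most $C$ serving all indices simultaneously; and the pigeonhole plus splicing of re-entry tails through a shared cut point produces arbitrarily fine chains inside a compact ball of $Y$ joining two representatives, contradicting their choice in distinct components. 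What each approach buys: the paper's route establishes the independently useful facts that weak tangents inherit the discrete UWS property and that their components are unbounded, while yours avoids transferring the property to $Y$ and yields the cleaner bound $N=C+1$, depending only on $C$ and not on the doubling constant, at the price of heavier quantifier and diagonalization bookkeeping --- for instance, your phrase about making $i\mapsto k_i^n$ ``constant in $n$'' should be read as fixing the sharing pattern of cut points along a subsequence (the limit points $k_j^*$ are never actually needed), and the thresholds in your intermediate claim must be taken uniformly over the finitely many indices $i$, which is routine.
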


Before proving the proposition, it will be helpful to establish a lemma.

\begin{lemma} \label{weaktanuws}
If $X$ is doubling and has the discrete UWS property with constant $C$, then every weak tangent of $X$ also has the discrete UWS property with constant depending only on $C$ and the doubling constant of $X$.
\end{lemma}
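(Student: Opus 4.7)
The plan is to verify the hypotheses of Lemma \ref{weakuws} for an arbitrary weak tangent $(Y, d', p)$ of $X$; the lemma will then deliver the discrete UWS property for $Y$ with constant depending only on $C$ and the doubling constant of $X$. Write $Y$ as the Gromov--Hausdorff limit of $(X, \lambda_n^{-1} d, p_n)$, with $\lambda_n \to 0$. Because the discrete UWS property and its constant are preserved under scaling the metric, each rescaled space $(X, \lambda_n^{-1} d)$ still has the discrete UWS property with constant $C$. The space $Y$ is automatically doubling with constant depending only on that of $X$, so only the cut-set condition needs work.

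Fix $y \in Y$, $0 < r < c \diam(Y)$, and $0 < \e < 1/2$, and set $\delta = \e r / 100$ (this $\delta$ will measure the allowable Gromov--Hausdorff error). Choose $R > d'(p, y) + 5r$ and then $n$ large enough that the definition of Gromov--Hausdorff convergence supplies a map $\phi_n$ from the $\lambda_n^{-1} d$-ball of radius $R$ around $p_n$ into $Y$, with error at most $\delta$. Pick $x$ in the domain of $\phi_n$ with $d'(\phi_n(x), y) < \delta$, and apply the discrete UWS property of $(X, \lambda_n^{-1} d)$ at $x$ with radius $r + 4\delta$ and parameter $2\e$. This produces a set $\tilde K$ of cardinality at most $C$, lying in the $\lambda_n^{-1} d$-ball of radius $2r + 8\delta$ around $x$, whose $2\e(r + 4\delta)$-neighborhood catches every discrete $2\e(r+4\delta)$-path joining the inner ball to the complement of the outer ball. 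Set $K = \phi_n(\tilde K) \subset Y$; routine distance estimates via $\phi_n$ give $K \subset B(y, 3r)$ and $\#K \leq C$.

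To verify the cut-set condition in $Y$, take an arbitrary discrete $\e r$-path $y_0, \ldots, y_\ell$ in $Y$ from $B(y, r)$ to $Y \setminus \cl{B}(y, 3r)$, and for each $i$ select $x_i$ in the domain of $\phi_n$ with $d'(\phi_n(x_i), y_i) < \delta$. The three approximation conditions give, in the $\lambda_n^{-1} d$ metric, step size at most $\e r + 3\delta$, initial point within $r + 3\delta$ of $x$, and terminal point farther than $3r - 3\delta$ from $x$. With $\delta = \e r / 100$, each of these constraints sits inside the hypotheses of the discrete UWS application above, so some $x_i$ lies within $2\e(r + 4\delta)$ of a point $\tilde v \in \tilde K$. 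Pushing this through $\phi_n$ yields $d'(y_i, \phi_n(\tilde v)) \leq 2\delta + 2\e(r + 4\delta) \leq 3\e r$, so the $3\e r$-neighborhood of $K$ meets the original path. The hypotheses of Lemma \ref{weakuws} hold with $L = 3$ and constant $\max(C, 3)$, giving the desired conclusion.

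The only genuine obstacle is the bookkeeping: conceptually the argument just transports paths one direction through $\phi_n$ and cut sets the other. The care required is in choosing $\delta$ small enough and allowing the slightly enlarged radii $r+4\delta, 2r+8\delta$ in $X$ together with $L=3$ (rather than the tight $L=2$) in $Y$, so that the pulled-back path is a legitimate input to the discrete UWS property in $X$ and the pushed-forward cut set controls neighborhoods in $Y$ within the $C\e r$ tolerance that Lemma \ref{weakuws} permits.
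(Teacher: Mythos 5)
Your argument is correct and follows essentially the same route as the paper: pull the discrete path back through the Gromov--Hausdorff approximation map, apply the discrete UWS property of the (rescaled) space $X$ at a nearby center, push the cut set forward, and invoke Lemma \ref{weakuws} to absorb the enlarged radii and neighborhoods. The only point worth making explicit is the standard truncation of the path at its first exit from $\cl{B}(y,3r)$, so that every $y_i$ lies in $B(y,3r+\e r)\subset B(p,R)$ and hence admits an approximating point $x_i$ in the domain of $\phi_n$ --- exactly the ``without loss of generality'' step the paper also makes.
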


\begin{proof}
Let $(Y,d', p)$ be a weak tangent of $X$ associated to the sequence $(X,\lambda_n^{-1}d, p_n)$, with $\lambda_n \rightarrow 0$. Fix $y \in Y$, $r >0$, and $0<\e<1/10$. Let $R = d'(p,y) + 6r$ and take $n \in \N$ large enough that there is $\phi_n \colon B(p_n,\lambda_n R) \rightarrow Y$ for which $\phi_n(p_n) = p$, the image $\phi_n(B(p_n,\lambda_n R))$ is $\e r$-dense in $B(p,R)$, and 
$$|\lambda_n^{-1}d(u,v) - d'(\phi_n(u),\phi_n(v))| < \e r$$ 
for each pair $u,v \in B(p_n,\lambda_n R)$. We may also assume that $\lambda_n \leq 1$.

Let $x \in B(p_n, \lambda_n R)$ be a point for which $d'(\phi_n(x),y) < \e r$. By the discrete UWS property in $X$, we can find a set $K' \subset B(x, 4 \lambda_n r)$ with $\# K' \leq C$ for which every discrete $4\e \lambda_n r$-path from $B(x,2\lambda_n r)$ to $X \backslash \cl{B}(x, 4\lambda_n r)$ meets the $4\e \lambda_n r$-neighborhood of $K'$. Let $K = \phi_n(K') \subset B(y,5r)$; note that this containment holds because
$$d'(\phi_n(z),y) \leq d'(\phi_n(z),\phi_n(x)) + \e r \leq \lambda_n^{-1} d(z,x) + 2\e r \leq 4r + 2\e r < 5r$$
for each $z \in K'$. It is also clear that $\# K \leq \#K' \leq C$.

Now let $y_0, \ldots, y_\ell$ be a discrete $\e r$-path in $Y$ that joins $B(y,r)$ and $Y \backslash \cl{B}(y,5r)$. Without loss of generality, we may assume that this discrete path lies in $B(y, 6r)$ and so is contained in $B(p,R)$. Consequently, there are points $x_i \in B(p_n, \lambda_n R)$ with $d'(\phi_n(x_i),y_i) < \e r$. Note that
$$\lambda_n^{-1} d(x_i,x_{i-1}) \leq d'(\phi_n(x_i),\phi_n(x_{i-1})) +\e r \leq d'(y_i,y_{i-1}) + 3\e r < 4 \e r,$$
so that $x_0,\ldots,x_\ell$ is a discrete $4\e \lambda_n r$-path. Moreover,
$$\lambda_n^{-1} d(x_0,x) \leq d'(\phi_n(x_0),\phi_n(x)) +\e r \leq d'(y_0,y) + 3\e r < 2r$$
and
$$\lambda_n^{-1} d(x_\ell,x) \geq d'(\phi_n(x_\ell),\phi_n(x)) - \e r \geq d'(y_\ell, y) - 3\e r > 4r,$$
so this discrete path joins $B(x,2\lambda_n r)$ to $X \backslash \cl{B}(x, 4\lambda_n r)$. In particular, there is $z \in K'$ with $d(z,x_i) < 4\e \lambda_n r$ for some $1\leq i \leq \ell$. Consequently,
$$d'(\phi_n(z), y_i) \leq d'(\phi_n(z), \phi_n(x_i)) + \e r \leq \lambda_n^{-1} d(z,x_i) + 2 \e r < 6 \e r$$
so $y_i$ lies in the $6\e r$-neighborhood of $K'$. Using Lemma \ref{weakuws}, we conclude that $Y$ has the discrete UWS property.
\end{proof}

In fact, our argument in the previous lemma shows that if $(X_n,d_n,p_n)$ are pointed metric spaces that are uniformly doubling and have the discrete UWS property with a uniform constant, then any Gromov-Hausdorff limit of this sequence also has the discrete UWS property. We will not need this more general statement, though.

\begin{proof}[\textbf{Proof of Proposition \ref{uwsblowup}}]
We may assume that $X$ has at least two points. Suppose that $(X, \lambda_n^{-1} d, p_n)$ converges in the Gromov-Hausdorff sense to $(Y,d',p)$, where $\lambda_n \rightarrow 0$. Let $C_1,\ldots, C_k$ be connected components in $Y$. Our goal is to give a uniform bound on $k$, where the bound depends only on the constant $C$ from the discrete UWS property and on the doubling constant for $X$.

To this end, we first note that each $C_i$ is unbounded. Indeed, fix $y \in C_i$ and fix $R \geq \max\{100, 4d'(p,y)\}$ large. Now fix $0 < \e <1$. Take $n \in \N$ large enough that $4\lambda_n < \diam(X)$ and let $\phi_n \colon B(p_n, 5\lambda_n R) \rightarrow Y$ be a map for which $\phi_n(p_n) = p$, the image $\phi_n(B(p_n, 5\lambda_n R))$ is $\e$-dense in $B(p, 5R)$, and
$$|\lambda_n^{-1}d(u,v) - d'(\phi_n(u),\phi_n(v))| < \e$$ 
for each pair $u,v \in B(p_n, 5 \lambda_n R)$. Let $x_n \in B(p_n,\lambda_n R)$ be a point for which $d'(\phi_n(x_n),y) < \e$. As $X$ is connected, we can find a discrete $\lambda_n \e$-path, beginning with $x_n$ and ending with a point $z_n \in B(x_n, 4\lambda_n R) \backslash \cl{B}(x_n, 2\lambda_n R)$. Note that $B(x_n, 4\lambda_n R) \subset B(p_n, 5\lambda_n R)$, so the image of this discrete path under $\phi_n$ forms a discrete $2\e$-path beginning with $\phi_n(x_n) \in B(y, \e)$ and ending with $\phi_n(z_n) \in Y \backslash \cl{B}(y, R)$.

We have therefore shown that for all $\e >0$, there is a discrete $\e$-path in $Y$ from $y$ to $Y \backslash \cl{B}(y, R)$. As $Y$ is doubling and complete, it is proper, and so there is a point $y' \in B(y, 2R) \backslash \cl{B}(y, R)$ such that for all $\e >0$, there is a discrete $\e$-path from $y$ to $y'$. Consequently, $y' \in C_i$ as well, so that $\diam(C_i) \geq R$. Because $R$ was arbitrary, we conclude that $C_i$ is unbounded.

Now, as each $C_1,\ldots,C_k$ is unbounded, we can find a radius $R$ large enough that each $C_i$ intersects $B(p,R/2)$ and $Y \backslash \cl{B}(p,3R)$ nontrivially. Let $\gamma_i$ be a connected subset of $C_i \cap \lp \cl{B}(p,3R) \backslash B(p,R/2)\rp$ that intersects both $B(p,R)$ and $Y\backslash \cl{B}(p,2R)$. Note that there is a positive distance between any two $\gamma_i, \gamma_j$ because there is a positive distance between $C_i \cap \cl{B}(p,3R)$ and $C_j \cap \cl{B}(p,3R)$. Now, let $0 < \e <1$ be small enough that $\e R$ is less than one fourth of the minimal distance between any two $\gamma_i, \gamma_j$. Note that along each $\gamma_i$, there is a discrete $\e R$-path from $B(p,R)$ to $Y\backslash \cl{B}(p,2R)$. Moreover, the $2\e R$-neighborhoods of these discrete paths do not intersect. Thus, any set of points $K$ that meets the $\e R$-neighborhood of each discrete $\e R$-path from $B(p,R)$ to $Y\backslash \cl{B}(p,2R)$ must contain a point in each of these neighborhoods. In particular, $\#K \geq k$. 

By Lemma \ref{weaktanuws}, we know that $Y$ has the discrete UWS property with constant $C'$ depending only on $C$ and the doubling constant of $X$. Thus, it has such a set $K$ with $\#K \leq C'$. This immediately gives $k \leq C'$, as desired.
\end{proof}

\section{Conformal dimension of some H\"older circles} \label{Holdersec}

Our discussions in this paper have focused mostly on metric spaces with properties similar to those found on boundaries of John domains. In this final section we give an example of a H\"older domain whose boundary is a Jordan curve with Ahlfors-regular conformal dimension equal to 2. The construction is straightforward and suggests that one should not hope to prove smaller upper bounds on the Ahlfors-regular conformal dimension of boundaries of H\"older domains in much generality. Instead, the conformal dimension is probably a more interesting quantity in the H\"older setting.

We begin with the boundary of the unit square in $\C$. Along the bottom edge, which we identify with the interval $[0,1]$, take points $x_k = 2^{-k}$ for $k \in \N$. Let $r_k = a^{-k}$, where $a \geq 10$ is fixed, and let $\omega_k = e^{i\pi/4k}$. For each $k$, we replace the interval $(x_k, x_k + r_k)$ with the polygonal arc $\alpha_k$ in the unit square which visits the points
$$\begin{aligned}
x_k,
&\hspace{4pt} x_k + r_k \omega_k^{2k}, 
\hspace{2pt}  x_k + 2r_k \omega_k^{2k},
\hspace{2pt} x_k + 2r_k \omega_k^{2k-1}, 
\hspace{2pt} x_k + r_k \omega_k^{2k-1}, 
\hspace{2pt} x_k + r_k \omega_k^{2k-2}, \\
&\hspace{2pt} x_k + 2r_k \omega_k^{2k-2},
\ldots \ldots,
\hspace{2pt} x_k + 2r_k \omega_k, 
\hspace{2pt} x_k + r_k \omega_k, 
\hspace{2pt} x_k + r_k
\end{aligned}$$
in this order. Informally, $\alpha_k$ forms $2k$ ``spokes" in the annulus $\cl{B}(x_k,2r_k) \backslash B(x_k,r_k)$ if we view the annulus as a bike wheel. Notice that each arc $\alpha_k$ lies in the ball $\cl{B}(x_k,2r_k)$, so none of these arcs interfere with one another. Thus, it is clear that the resulting set $X$ is a Jordan curve. 

It is also not difficult to see that the inner domain, $\Omega$, bounded by $X$ is a H\"older domain. Indeed, every point in $\Omega$ can be joined to the midpoint of the unit square by a John arc (with uniform constant), except for the points that lie between two consecutive spokes in some $\alpha_k$. If $z$ is one of these latter points, then we observe the following. First, there is a John arc in $\Omega$ from $z$ to a point $z_1$ which has $\dist(z, X) \leq \dist(z_1,X) \approx r_k/k$. Then there is an arc in $\Omega$ from $z_1$ to a point $z_2$, where $z_2$ is in the the uniform John region in $\Omega$, that has length at most $r_k$ and lies at distance $\gtrsim r_k/k$ from $X$. It is straightforward to estimate the quasi-hyperbolic length of the first segment by $C \log(1/\delta_\Omega(z))$ and the second segment by $Ck \approx \log(1/\delta_\Omega(z_1))$, where $C$ is uniform. Using a John arc (with uniform constant) from $z_2$ to the midpoint of the unit square, which has quasi-hyperbolic length at most $C \log(1/\delta_\Omega(z_2))$, we see that the quasi-hyperbolic distance from $z$ to the midpoint is $\lesssim \log (1/\delta_\Omega(z))$.

Note that $X$ has Hausdorff dimension 1, so $\Cdim(X) = 1$. We claim now that $\Cdim_{\AR}(X) = 2$. The estimate $\Cdim_{\AR}(X) \leq 2$ comes from the fact that $X$ is a subset of the plane \cite[Corollary 14.17]{Hein01}. For the other inequality, we verify a lower bound on combinatorial modulus. To this end, fix $1<p<2$. For each $k \in \N$, let $P_k$ be an $a^{-k}$-separated set in $X$ with $x_k \in P_k$. Fix $\lambda \geq 32$, and let $G_k$ be the graph with vertex set $P_k$, where $x,y \in P_k$ are joined by an edge if $B(x,\lambda a^{-k}) \cap B(y,\lambda a^{-k}) \neq \emptyset$. We should note that balls are taken to be in $X$, not in the plane.

Fix $k \in \N$ very large and let $m$ be the largest integer for which $100\lambda m \leq a^k$. Consider the family $\Gamma_k(x_m)$ of vertex paths in $G_{m+k}$ that join $P_{m+k} \cap B(x_m,a^{-m})$ to $P_{m+k} \cap (X \backslash \cl{B}(x_m, 2 a^{-m}))$. Let $\gamma_1,\ldots, \gamma_{2m}$ denote the spokes in $\alpha_m$, which are line segments from $\partial B(x_m,a^{-m})$ to $\partial B(x_m,2 a^{-m})$. Note that the distance between any two spokes is at least $a^{-m}/10m$ and, so also, at least $10\lambda a^{-m-k}$. For each $i$, there is a vertex path $\tilde{\gamma}_i$ in $\Gamma_k(x_m)$ consisting of vertices that lie on $\gamma_i$. The paths $\tilde{\gamma}_i$ are pairwise vertex disjoint and $\# \tilde{\gamma}_i \approx a^k$ with uniform constants.

Suppose that $\rho \colon P_{m+k} \rightarrow [0,\infty]$ is an admissible weight function for $\Gamma_k(x_m)$. Then for each $i$, we have
$$1 \leq \sum_{v \in \tilde{\gamma}_i} \rho(v) \leq \lp \# \tilde{\gamma}_i \rp^{(p-1)/p} \lp \sum_{v \in \tilde{\gamma}_i} \rho(v)^p \rp^{1/p} \lesssim a^{k(p-1)/p} \lp \sum_{v \in \tilde{\gamma}_i} \rho(v)^p \rp^{1/p},$$
so that
$$\sum_{v \in \tilde{\gamma}_i} \rho(v)^p \gtrsim a^{-k(p-1)}.$$
Consequently, we can estimate
$$\sum_{v \in P_{m+k}} \rho(v)^p \geq \sum_{i=1}^{2m} \sum_{v \in \tilde{\gamma}_i} \rho(v)^p \gtrsim m \cdot a^{-k(p-1)} \gtrsim a^k  \cdot a^{-k(p-1)} = a^{k(2-p)},$$
and therefore $\mod_p(\Gamma_k(x_m),G_{m+k}) \gtrsim a^{k(2-p)}$. Using the notation from before, we conclude that
$$M_p = \liminf_{k \rightarrow \infty} M_p(k) \gtrsim \liminf_{k \rightarrow \infty} a^{k(2-p)} = \infty,$$
so $Q_N \geq p$. As $p \in (1,2)$ was arbitrary, we obtain $\Cdim_{\AR}(X) = Q_N \geq 2$.

\begin{remark}
One could also establish $\Cdim_{\AR}(X) \geq 2$ in the following way, without explicit modulus estimates. Namely, let $\Cdim_{\textup{A}}(X)$ denote the conformal Assouad dimension of $X$, which is an \ti{a priori} lower bound for $\Cdim_{\AR}(X)$; see \cite[Section 2.2]{MT10} for definitions. It is known that the conformal Assouad dimension does not increase under taking weak tangents \cite[Proposition 6.1.7]{MT10}, and it is bounded below by the topological dimension. Now, observe that the sequence $(X, r_k^{-1}|\cdot|, x_k)$ gives a weak tangent $Y$ of $X$ that contains an isometric copy of the planar quarter annulus with inner radius 1 and outer radius 2. Thus,
$$\Cdim_{\AR}(X) \geq \Cdim_{\textup{A}}(X) \geq \Cdim_{\textup{A}}(Y) \geq \dim_{\textup{top}}(Y) \geq 2.$$
I thank the referee for pointing out this alternative argument.
\end{remark}

This example shows that the conformal dimension and the Ahlfors-regular conformal dimension can be very different, even for Jordan curves that bound a simply connected H\"older domain. In fact, we expect this to happen generically for H\"older domains that come from $\SLE$ process. More concretely, we ask the following question.

\begin{question} \label{sle}
Let $0<\kappa < 4$, and let $\gamma$ denote an $\SLE_{\kappa}$ trace stopped at time $t=1$. Is it true that, almost surely, $\Cdim_{\AR}(\gamma) =2$ and $\Cdim(\gamma) = 1$?
\end{question}

\begin{bibdiv}
\begin{biblist}

\bib{AJKS}{article}{
   author={Astala, K.},
   author={Jones, P.},
   author={Kupiainen, A.},
   author={Saksman, E.},
   title={Random conformal weldings},
   journal={Acta Math.},
   volume={207},
   date={2011},
   number={2},
   pages={203--254},
}

\bib{BP}{article}{
   author={Becker, J.},
   author={Pommerenke, C.},
   title={H\"older continuity of conformal mappings and nonquasiconformal
   Jordan curves},
   journal={Comment. Math. Helv.},
   volume={57},
   date={1982},
   number={2},
   pages={221--225},
}

\bib{BT}{article}{
   author={Bishop, C.},
   author={Tyson, J.},
   title={Conformal dimension of the antenna set},
   journal={Proc. Amer. Math. Soc.},
   volume={129},
   date={2001},
   number={12},
   pages={3631--3636},
}

\bib{BK}{article}{
   author={Bonk, M.},
   author={Kleiner, B.},
   title={Rigidity for quasi-M\"obius group actions},
   journal={J. Differential Geom.},
   volume={61},
   date={2002},
   number={1},
   pages={81--106},
}

\bib{BKplanes}{article}{
   author={Bonk, M.},
   author={Kleiner, B.},
   title={Quasi-hyperbolic planes in hyperbolic groups},
   journal={Proc. Amer. Math. Soc.},
   volume={133},
   date={2005},
   number={9},
   pages={2491--2494},
}

\bib{BourK}{article}{
   author={Bourdon, M.},
   author={Kleiner, B.},
   title={Combinatorial modulus, the combinatorial Loewner property, and
   Coxeter groups},
   journal={Groups Geom. Dyn.},
   volume={7},
   date={2013},
   number={1},
   pages={39--107},
}

\bib{CJY}{article}{
   author={Carleson, L.},
   author={Jones, P.},
   author={Yoccoz, J.-C.},
   title={Julia and John},
   journal={Bol. Soc. Brasil. Mat. (N.S.)},
   volume={25},
   date={1994},
   number={1},
   pages={1--30},
}

\bib{CP14}{article}{
   author={Carrasco Piaggio, M.},
   title={Conformal dimension and canonical splittings of hyperbolic groups},
   journal={Geom. Funct. Anal.},
   volume={24},
   date={2014},
   number={3},
   pages={922--945},
}

\bib{CarPhD}{article}{
   author={Carrasco Piaggio, M.},
   title={Conformal dimension and combinatorial modulus of compact metric
   spaces},
   journal={C. R. Math. Acad. Sci. Paris},
   volume={350},
   date={2012},
   number={3-4},
   pages={141--145},
}

\bib{CP13}{article}{
   author={Carrasco Piaggio, M.},
   title={On the conformal gauge of a compact metric space},
   journal={Ann. Sci. \'Ec. Norm. Sup\'er. (4)},
   volume={46},
   date={2013},
   number={3},
   pages={495--548 (2013)},
}

\bib{Die}{book}{
   author={Diestel, R.},
   title={Graph theory},
   series={Graduate Texts in Mathematics},
   volume={173},
   edition={4},
   publisher={Springer, Heidelberg},
   date={2010},
}

\bib{GS}{article}{
   author={Graczyk, J.},
   author={Smirnov, S.},
   title={Collet, Eckmann and H\"older},
   journal={Invent. Math.},
   volume={133},
   date={1998},
   number={1},
   pages={69--96},
}

\bib{Hein01}{book}{
   author={Heinonen, J.},
   title={Lectures on analysis on metric spaces},
   series={Universitext},
   publisher={Springer-Verlag, New York},
   date={2001},
}

\bib{HM}{article}{
   author={Herron, D.},
   author={Meyer, D.},
   title={Quasicircles and bounded turning circles modulo bi-Lipschitz maps},
   journal={Rev. Mat. Iberoam.},
   volume={28},
   date={2012},
   number={3},
   pages={603--630},
}

\bib{Kap}{article}{
   author={Kapovich, M.},
   title={Kleinian groups in higher dimensions},
   conference={
      title={Geometry and dynamics of groups and spaces},
   },
   book={
      series={Progr. Math.},
      volume={265},
      publisher={Birkh\"auser, Basel},
   },
   date={2008},
   pages={487--564},
}

\bib{KK}{article}{
  author={Keith, S.},
  author={Kleiner, B.},
  journal={unpublished},
  }

\bib{Kov}{article}{
   author={Kovalev, L.},
   title={Conformal dimension does not assume values between zero and one},
   journal={Duke Math. J.},
   volume={134},
   date={2006},
   number={1},
   pages={1--13},
}

\bib{MSS}{article}{
   author={Ma{\~n}{\'e}, R.},
   author={Sad, P.},
   author={Sullivan, D.},
   title={On the dynamics of rational maps},
   journal={Ann. Sci. \'Ecole Norm. Sup. (4)},
   volume={16},
   date={1983},
   number={2},
   pages={193--217},
}

\bib{MT10}{book}{
   author={Mackay, J.},
   author={Tyson, J.},
   title={Conformal dimension. Theory and application},
   series={University Lecture Series},
   volume={54},
   publisher={American Mathematical Society, Providence, RI},
   date={2010},
}

\bib{McM}{article}{
   author={McMullen, C.},
   title={Kleinian groups and John domains},
   journal={Topology},
   volume={37},
   date={1998},
   number={3},
   pages={485--496},
}

\bib{NV}{article}{
   author={N{\"a}kki, R.},
   author={V{\"a}is{\"a}l{\"a}, J.},
   title={John disks},
   journal={Exposition. Math.},
   volume={9},
   date={1991},
   number={1},
   pages={3--43},
}

\bib{Pan}{article}{
   author={Pansu, P.},
   title={Dimension conforme et sph\`ere \`a l'infini des vari\'et\'es \`a
   courbure n\'egative},
   journal={Ann. Acad. Sci. Fenn. Ser. A I Math.},
   volume={14},
   date={1989},
   number={2},
   pages={177--212},
}

\bib{Prz}{article}{
   author={Przytycki, F.},
   title={H\"older implies Collet-Eckmann},
   journal={Ast\'erisque},
   number={261},
   date={2000},
   pages={385--403},
}

\bib{RS}{article}{
   author={Rohde, S.},
   author={Schramm, O.},
   title={Basic properties of SLE},
   journal={Ann. of Math. (2)},
   volume={161},
   date={2005},
   number={2},
   pages={883--924},
}

\bib{Smi}{article}{
   author={Smirnov, S.},
   title={Symbolic dynamics and Collet-Eckmann conditions},
   journal={Internat. Math. Res. Notices},
   date={2000},
   number={7},
   pages={333--351},
}

\bib{Tuk}{article}{
   author={Tukia, P.},
   title={On isomorphisms of geometrically finite M\"obius groups},
   journal={Inst. Hautes \'Etudes Sci. Publ. Math.},
   number={61},
   date={1985},
   pages={171--214},
}

\bib{TV}{article}{
   author={Tukia, P.},
   author={V{\"a}is{\"a}l{\"a}, J.},
   title={Quasisymmetric embeddings of metric spaces},
   journal={Ann. Acad. Sci. Fenn. Ser. A I Math.},
   volume={5},
   date={1980},
   number={1},
   pages={97--114},
}

\end{biblist}
\end{bibdiv}

\end{document}